\theoremstyle{definition}
\newtheorem*{definition}{Definition}
\theoremstyle{theorem}
\newtheorem{thm}{Theorem}[section]
\newtheorem{prop}[thm]{Proposition}
\newtheorem{lemma}[thm]{Lemma}
\newtheorem{conj}[thm]{Conjecture}
\theoremstyle{remark}
\newtheorem{claim}{Claim}
	\newcommand{\EE}{{\mathbb E}}
\newcommand{\cF}{\mathcal{F}}
\newcommand{\mpair}{m_{\textnormal{pair}}}
\newcommand{\eps}{\varepsilon}
\newcommand{\red}{\textnormal{red}}
\newcommand{\blue}{\textnormal{blue}}
\title{On off-diagonal hypergraph Ramsey numbers}
\author{David Conlon\thanks{Department of Mathematics, California Institute of Technology, Pasadena, CA 91125. Email: dconlon@caltech.edu. Research supported by NSF Award DMS-2054452.} \and
Jacob Fox\thanks{Department of Mathematics, Stanford University, Stanford, CA 94305. Email: jacobfox@stanford.edu. Research supported by NSF Award DMS-2154129.} \and
Benjamin Gunby\thanks{Department of Mathematics, Rutgers University, Piscataway, NJ 08854. Email: bg570@rutgers.edu.}\and
Xiaoyu He\thanks{Department of Mathematics, Princeton University, Princeton, NJ 08544. Email: xiaoyuh@princeton.edu. Research supported by NSF Award DMS-2103154.} \and
Dhruv Mubayi\thanks{Department of Mathematics, Statistics and Computer Science, University of Illinois, Chicago, IL 60607. Email: mubayi@uic.edu. Research partially supported by NSF Awards DMS-1763317,
DMS-1952767 and DMS-2153576, by a Humboldt Research Award and by a Simons Fellowship.} \and
Andrew Suk\thanks{Department of Mathematics, University of California at San Diego, La Jolla, CA 92093. Email: asuk@ucsd.edu. Research supported by an NSF
CAREER Award and by NSF Awards DMS-1952786 and DMS-2246847.} \and 
Jacques Verstra\"ete\thanks{Department of Mathematics, University of California at San Diego, La Jolla, CA 92093. Email: jacques@ucsd.edu. Research supported by NSF Award DMS-1800332.}}
\date{}
\begin{document}

\maketitle

\begin{abstract}
    A fundamental problem in  Ramsey theory is to determine the growth rate in terms of $n$ of the Ramsey number $r(H, K_n^{(3)})$ of a fixed $3$-uniform hypergraph $H$ versus the complete $3$-uniform hypergraph with $n$ vertices. We study this problem, proving two main results. First, we show that for a broad class of $H$, including links of odd cycles and tight cycles of length not divisible by three, $r(H, K_n^{(3)}) \ge 2^{\Omega_H(n \log n)}$. This significantly generalizes and simplifies an earlier construction of Fox and He which handled the case of links of odd cycles 
    and is sharp both in this case and for all but finitely many tight cycles of length not divisible by three. 
    Second, disproving a folklore conjecture in the area, we show that there exists a linear hypergraph $H$ for which $r(H, K_n^{(3)})$ is superpolynomial in $n$. This provides the first example of a separation between $r(H,K_n^{(3)})$ and $r(H,K_{n,n,n}^{(3)})$, since the latter is known to be polynomial in $n$ when $H$ is linear.
\end{abstract}

\section{Introduction}

Given two $k$-uniform hypergraphs (or, henceforth, $k$-graphs) $G$ and $H$, their \emph{Ramsey number} $r(G,H)$ is the smallest $N$ such that whenever the edges of the complete $k$-graph $K_N^{(k)}$ are colored in red or blue, there must be either a red copy of $G$ or a blue copy of $H$. That these numbers exist is the statement of Ramsey's original theorem~\cite{Ram}, but many questions remain about their quantitative behavior.

The graph case $k = 2$ has received particular attention. A result of Erd\H{o}s and Szekeres~\cite{ESz} from 1935 says that
\[r(K_s, K_n) \leq \binom{n + s - 2}{s-1}\]
and one of the main driving forces in the area has been to decide if and when this bound is close to being tight. When $s = n$, their bound implies that $r(K_n,K_n) \leq 4^n$ 
and the problem of improving this bound by an exponential factor, which was recently resolved in a breakthrough paper of Campos, Griffiths, Morris and Sahasrabudhe~\cite{CGMS}, has occupied a central place in extremal combinatorics. 

The problem that we will be concerned with in this paper is more closely related to the estimation of the off-diagonal Ramsey numbers $r(K_s, K_n)$ where $s$ is fixed and $n$ tends to infinity. In this case, the Erd\H{o}s--Szekeres bound $r(K_s, K_n) \leq n^{s-1}$ can be close to tight. For instance, by combining important results of Ajtai, Koml\'os and Szemer\'edi~\cite{AKS} and Kim~\cite{Ki}, we have
\[r(K_3,K_n)=\Theta\left(\frac{n^2}{\log n}\right).\]
Until lately, no similar result was known for any other $s$, but another recent breakthrough result by Mattheus and Verstra\"{e}te~\cite{MaV} shows that $r(K_4,K_n)= \tilde{\Theta}(n^3)$ (where the tilde in the big-O notation means that the estimate is tight up to logarithmic factors). At present, no similar result is known for $s \ge 5$.

Despite the problems that remain in the graph case, even less is known about Ramsey numbers of hypergraphs. It has been known since the 1970s that there are positive constants $c$ and $C$ depending only on $k$ such that
\begin{equation*}\label{powertower}
t_{k-1}(c n^2)\leq r(K_n^{(k)},K_n^{(k)})\leq t_{k}(C n),
\end{equation*}
where the tower function is given by $t_1(x) = x$ and $t_{i+1}(x) = 2^{t_i(x)}$. However, the correct tower height in these bounds remains unknown for all $k \geq 3$. The case $k = 3$ is of particular importance, since the ingenious stepping-up lemma of Erd\H{o}s and Hajnal (see, for instance,~\cite{GRS}) allows us, starting from $k = 3$, to construct lower bound colorings for uniformity $k+1$ from colorings for uniformity $k$, gaining an extra exponential each time. In particular, if we could show that $r(K_n^{(3)}, K_n^{(3)})$ grows as a double exponential in $n$, this would essentially resolve the problem of estimating $r(K_n^{(k)},K_n^{(k)})$ for all $k$.

Our concern in this paper will be with studying off-diagonal Ramsey numbers for $3$-graphs. As above, variants of the stepping-up lemma (see, for example,~\cite{CFS3, MSBLMS}) can be used to lift bounds on off-diagonal Ramsey numbers for $3$-graphs to higher uniformities, so we will focus on the critical $3$-uniform case. For $r(K_s^{(3)}, K_n^{(3)})$ with $s$ fixed and $n$ growing, the best known bounds, due to Conlon, Fox and Sudakov~\cite{CFS}, are that
\[2^{c n \log n} \leq r(K_s^{(3)}, K_n^{(3)}) \leq 2^{C n^{s-2} \log n}\]
for some positive constants $c$ and $C$ depending only on $s$. 

In general, there are very few $H$ for which the growth rate of $\log(r(H,K_n^{(3)}))$ in terms of $n$ is well understood. The only exceptions are tripartite $3$-graphs and their iterated blowups, for which Erd\H os and Hajnal~\cite{EH} proved  that $r(H,K_n^{(3)}) \le n^{O_H(1)}$, and links of odd cycles, for which Fox and He~\cite{FH} showed that
\[r(H,K_n^{(3)})=2^{\Theta_H(n\log n)},\]
proving that an old upper bound of Erd\H{o}s and Hajnal \cite{EH} is tight. Note that for us the \emph{link} of a ($2$-)graph $G$ will mean the $3$-graph $L_G$ with vertex set $V(G)\cup\{u\}$, where $u$ is a new vertex, and edge set $e\cup\{u\}$ for every $e\in E(G)$. More generally, the result of Fox and He showed that $r(H,K_{n,n,n}^{(3)})\geq 2^{\Omega_H(n\log n)}$ whenever $H$ is the link of an odd cycle. 

Our first theorem extends this latter result by showing that $r(H,K_{n,n,n}^{(3)})\geq 2^{\Omega_H(n\log n)}$ for a large class of $3$-graphs $H$, including links of odd cycles and all tight cycles of length $n$ with $3\nmid n$ and, as we shall see below, is sharp in both these cases. We state our result in terms of a function of $H$ that we denote by $\mpair(H)$, though we will hold off on formally defining this parameter until Section~\ref{sec:proofmpair}. 

\begin{thm}\label{thm:mainmpair}
    If $\mpair(H)\geq\frac{1}{2}$, then $r(H, K_{n,n,n}^{(3)}) \ge 2^{\Omega_H(n \log n)}$.
\end{thm}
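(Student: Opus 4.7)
The plan is to prove the lower bound via a probabilistic construction of a $2$-coloring of $K_N^{(3)}$ on $N = 2^{c_H n \log n}$ vertices. Since $\mpair(H)$ is, judging from its name and role, a parameter of $H$ that measures an extremal property involving pairs of its vertices, the natural construction is one in which the color of each triple is determined by random data attached to pairs of vertices. This generalizes the Fox--He construction for links of odd cycles, and the threshold $\mpair(H) \geq \frac{1}{2}$ is expected to be precisely the condition under which the Fox--He strategy continues to work.

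Concretely, I would linearly order $[N]$, assign independent uniformly random labels from an alphabet of size $\Theta(n)$ to the pairs, and color each triple $\{a<b<c\}$ red or blue according to a deterministic rule that depends on the labels of the designated pair(s) of the triple -- for instance, the rule ``red iff the bottom pair $\{a,b\}$ and the top pair $\{b,c\}$ carry the same label,'' or some variant matching the convention built into the definition of $\mpair$. This yields a highly structured, non-i.i.d.\ coloring in which the red--blue behavior is governed by the combinatorics of pair labels.

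The proof then splits into two union-bound estimates. For the blue side, fix a tripartition $A\cup B\cup C$ of $3n$ vertices. A short argument shows that the event ``every triple in $A\times B\times C$ is blue'' forces, for each $v\in B$, the label multisets coming from $A$ and $C$ through $v$ to satisfy a restrictive disjointness-type constraint, which happens with probability decaying fast enough in $n$ to beat the configuration count $\binom{N}{n}^3 \leq 2^{O(n^2\log n)}$ once $c_H$ is taken small enough. Standard tuning of the alphabet size and the coloring rule makes this go through.

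For the red side, fix an embedding $\phi:V(H)\hookrightarrow [N]$. The induced linear order on $\phi(V(H))$ picks out a designated pair per edge of $H$, and the hypothesis $\mpair(H)\geq \frac{1}{2}$ is used to extract, regardless of which order is induced, a large ``independent'' subfamily of these designated pairs -- intuitively, a matching in the auxiliary pair-graph of $H$. Each such pair imposes an independent probability-$O(1/n)$ constraint on its random label, and combining these with the embedding count yields an expected number of red copies of $H$ that is $o(1)$ for $c_H$ small. The main technical obstacle is this red-side extraction: one must translate the abstract condition $\mpair(H)\geq \frac{1}{2}$ into a matching-style independence guarantee that holds uniformly over all induced orders on $\phi(V(H))$, and the careful definition of $\mpair$ in Section~\ref{sec:proofmpair} should be engineered so that this translation closes the union bound cleanly.
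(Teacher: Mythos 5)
Your high-level framework is in the right neighborhood---a pair-indexed random function on $\binom{[N]}{2}$, $N = 2^{cn\log n}$, and a union bound over ordered tripartitions---but the proposal contains a structural gap on both the red and blue sides, and misses the one ingredient that makes the paper's argument close.

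The key object you are missing is a second, \emph{fixed} helper coloring. The paper's construction is two-level: a random labelling $f:\binom{[N]}{2}\to[t]$ with alphabet size $t=n^{\varepsilon}$ (not $\Theta(n)$), combined with a hand-built function $g:[t]^3\to\{\red,\blue\}$ satisfying three properties (the red triples form an oriented $3$-graph; among any $k\le\binom{v(H)}{2}$ labels there are fewer than $k/2$ red triples; no balanced blue oriented tripartite structure of size $t^{1-\varepsilon}$). The triple coloring is $\chi(u,v,w)=g(f(uv),f(vw),f(wu))$. A simple rule like ``red iff the bottom and top pair carry the same label'' cannot reproduce this: the carefully chosen $g$ is precisely what encodes the hypothesis $\mpair(H)\ge\tfrac12$ into the coloring.

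Your red-side argument is the serious problem. You propose a first-moment bound over embeddings of $H$, with the condition $\mpair(H)\ge\tfrac12$ extracting ``$O_H(1)$'' independent probability-$O(1/t)$ constraints. That union bound cannot close: the number of embeddings is $\sim N^{v(H)}=2^{\Theta_H(n\log n)}$, while a constant number of constraints over a polynomial-size alphabet yields a per-embedding probability of only $t^{-O_H(1)}=n^{-O_H(1)}$, which is not nearly small enough. The paper avoids this entirely: the red-side absence is \emph{deterministic}. By \cref{lem:avoidmpair}, $\mpair(H)\ge\tfrac12$ is equivalent to $H$ being ``avoidable,'' meaning every pair-homomorphism image of $H$ contains a Berge cycle; property 2 of $g$ ensures the red triples of $g$ are Berge-acyclic on any set of at most $\binom{v(H)}{2}$ labels. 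Hence $\chi$ contains no red copy of $H$ for \emph{every} choice of $f$. No probability is spent on the red side at all.

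On the blue side, your sketch (``a short argument'' via disjointness constraints on label multisets) drastically understates the difficulty. The expensive part of the paper is exactly the blue bound (Lemma \ref{lem:blue}): a three-stage counting argument, revealing the pair labels one $j$ at a time and tracking a ``slow steps'' potential, is needed to show that the number of label-assignments making an entire $I\times J\times K$ blue is at most $t^{(3-\delta)n^2}$. This is where property 3 of $g$ is used and where the alphabet size $t=n^{\varepsilon}$ is calibrated. Without some such mechanism your union bound over $\binom{N}{n}^3$ does not close.
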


Since $K_{n,n,n}^{(3)}\subseteq K_{3n}^{(3)}$, this theorem also holds with $K_{n,n,n}^{(3)}$ replaced by $K_n^{(3)}$. Though we will only define $\mpair(H)$ later on, some sense of it may be gained by noting that an early result of Erd\H{o}s and Hajnal \cite{EH} on off-diagonal hypergraph Ramsey numbers may also be couched in this language.

\begin{thm}\label{thm:EHmpair}[Rephrasing of a result in \cite{EH}]
    If $\mpair(H)>\frac{1}{3}$, then $r(H, K_{n,n,n}^{(3)}) \ge 2^{\Omega_H(n)}$.
\end{thm}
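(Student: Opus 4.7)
I would attempt the lower bound via a random construction, in the spirit of the classical Erd\H{o}s--Hajnal argument. Set $N=2^{cn}$ for a small constant $c=c(H)>0$ to be chosen, sample a uniformly random vertex coloring $\chi\colon[N]\to\{1,2,3\}$, and declare a triple $T\in\binom{[N]}{3}$ to be \emph{red} whenever $\chi|_T$ is non-rainbow (uses at most two colors) and \emph{blue} otherwise. The base coloring by itself contains obvious blue $K_{n,n,n}^{(3)}$'s (taking any three disjoint $n$-subsets of three distinct $\chi$-classes), so I would overlay an independent random perturbation---e.g., flipping each rainbow triple to red independently with some small constant probability $q>0$, or rerolling colors using a secondary random graph structure on the pair shadow of $[N]$---so that large blue tripartite blocks become scarce.

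For the blue side, any potential blue $K_{n,n,n}^{(3)}$ on triples $W_1\sqcup W_2\sqcup W_3$ of size $n$ requires all $n^3$ tripartite triples to remain blue after the perturbation. This event occurs with probability at most $(1-q)^{n^3}=\exp(-\Omega(n^3))$ for a fixed $(W_1,W_2,W_3)$, so a union bound over the $\binom{N}{n}^3\leq 2^{3cn^2}$ choices gives expected blue $K_{n,n,n}^{(3)}$-count bounded by $2^{3cn^2-\Omega(n^3)}=o(1)$ for any $c>0$ and $n$ large.

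For the red side, the assumption $\mpair(H)>\tfrac{1}{3}$ enters critically. For a fixed embedding $\phi\colon V(H)\to[N]$, the induced map $\chi\circ\phi$ is a uniformly random 3-coloring of $V(H)$, and a red copy of $H$ demands that every edge of $H$ be either non-rainbow under $\chi\circ\phi$ or independently flipped by the perturbation. Controlling the former is the combinatorial heart of the argument: I would try to show that $\mpair(H)>\tfrac{1}{3}$ implies
\[
\Pr\bigl[\text{no edge of }H\text{ is rainbow under a uniform }\chi\colon V(H)\to\{1,2,3\}\bigr]\ \leq\ \alpha^{v(H)}
\]
for some $\alpha<3^{-c^*}$ with $c^*=c^*(H)>0$. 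Summed over the at most $N^{v(H)}$ embeddings, this bounds the expected red $H$-count by $(N\alpha)^{v(H)}=o(1)$ as long as we take $c<c^*$.

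The principal obstacle is establishing the displayed inequality above from the $\mpair$ condition; this is where the pair-density threshold of $\tfrac{1}{3}$ appears naturally, corresponding to the fact that a uniformly random 3-partition of a vertex set places each pair monochromatically with probability $1/3$, so edges within a subgraph whose pair density exceeds $1/3$ resist being simultaneously non-rainbow. Concretely, I would locate, inside any $H$ with $\mpair(H)>\tfrac{1}{3}$, a subhypergraph whose pair-density witnesses the strict inequality, and argue (e.g.\ via an entropy or correlation-inequality estimate on the random tripartition) that this forces at least a constant fraction of its edges to be rainbow in expectation, yielding the gap $\alpha<3^{-c^*}$ needed to finish the proof.
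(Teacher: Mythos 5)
Your construction is broken at the outset, in a way that no choice of parameters or perturbation can repair. With a vertex coloring $\chi\colon[N]\to\{1,2,3\}$ and the rule ``red iff non-rainbow,'' any triple lying entirely inside a single color class is monochromatic, hence non-rainbow, hence red. Each color class has about $N/3$ vertices, so the red $3$-graph contains a complete $3$-graph on $\sim N/3$ vertices and therefore a red copy of \emph{every} $H$ (with $v(H)\le N/3$) deterministically. The perturbation only adds red triples, so it cannot help. This is also visible in the union-bound arithmetic: the quantity $\Pr[\text{no edge of }H\text{ rainbow}]$ is a \emph{constant} depending only on $H$ --- it is at least $\Pr[\chi \text{ uses two colors on }V(H)]\ge 3\cdot(2/3)^{v(H)}-3\cdot 3^{-v(H)}$, so $\alpha\ge 2/3-o_{v(H)}(1)$ --- while the number of embeddings is $N^{v(H)}=2^{c n\,v(H)}$, which grows with $n$. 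Thus $(N\alpha)^{v(H)}\to\infty$ rather than $o(1)$; no constant $c<c^*$ can fix this.

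The paper's argument is different in two essential ways. First, it colors \emph{pairs}, not vertices: take a uniformly random $\chi\colon\binom{[N]}{2}\to\{1,2,3\}$ and color a triple $u<v<w$ red iff $\chi(uv)=1$, $\chi(vw)=2$, $\chi(uw)=3$ (the specific ordered $123$-pattern, not ``non-rainbow''). Second, and crucially, the red $H$-avoidance is then \emph{deterministic}, not probabilistic. By the lemma preceding the statement, $\mpair(H)>1/3$ is equivalent to $H$ not being $123$-inducible: no ordering of $V(H)$ admits a labelling of $\partial H$ by $\{1,2,3\}$ that assigns the pattern $1,2,3$ to each edge. A red copy of $H$ under the above coloring would produce precisely such a labelling (take the ordering of $V(H)$ induced by the embedding and the restriction of $\chi$ to the shadow), a contradiction. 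So \emph{every} sample of $\chi$ avoids red $H$, and the only probabilistic work is on the blue side, where one checks that a random pair-coloring has no blue $K_{n,n,n}^{(3)}$ for $n=\Theta(\log N)$. Your instinct to locate the role of the threshold $\mpair>1/3$ in a structural property of $H$ is correct, but that property is $123$-non-inducibility (a statement about labelled orderings of the shadow), not a quantitative statement about rainbow edges under random vertex colorings.
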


As well as being much more general than the result of Fox and He~\cite{FH}, Theorem~\ref{thm:mainmpair} also has a simpler proof, in particular avoiding the entropy techniques of~\cite{FH} and perhaps suggesting that $\mpair(H)$ is in fact the correct parameter to work with. It is also often tight, as shown by the following result. The \emph{shadow} (or \emph{$1$-skeleton}) of a $3$-graph $H$ is the graph $G$ with $V(G)=V(H)$ whose edge set consists of all pairs of vertices $uv$ such that  $uvw$ is an edge of $H$ for some vertex $w$. We use the standard notation $\partial H = E(G)$.

\begin{thm}\label{thm:nlognsupersaturation}
   Let $H$ be a $3$-graph that can be created by starting with the empty graph and iteratively adding edges that strictly increase the number of edges in the shadow of $H$ at each step. Then
   \[r(H, K_{n,n,n}^{(3)}) \le 2^{O_H(n \log n)}.\]
\end{thm}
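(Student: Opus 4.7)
I would proceed by induction on the number of edges $m=|E(H)|$, following the hypothesised edge ordering $e_1,\ldots,e_m$ with distinct new shadow pairs $p_1,\ldots,p_m$. The base case $m=0$ is trivial. For the inductive step, let $H' = H - e_m$; the same ordering restricted to $e_1,\ldots,e_{m-1}$ certifies that $H'$ satisfies the hypothesis as well, so by induction $r(H', K_{n,n,n}^{(3)}) \le 2^{C' n \log n}$. Writing $p_m = \{u,v\}$ and $w = e_m \setminus p_m$, the goal is to show $r(H, K_{n,n,n}^{(3)}) \le 2^{(C' + O_H(1)) n \log n}$.

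The main structural observation is that $p_m \notin \partial H'$, so in any red copy of $H'$ the pair $\{\phi(u), \phi(v)\}$ is unconstrained by the edges of $H'$ itself. Given a $2$-coloring of $K_N^{(3)}$ with $N = 2^{C n \log n}$ and no blue $K_{n,n,n}^{(3)}$, I would first invoke supersaturation on the inductive bound to produce many red copies of $H'$, and then try to extend one of them to a red $H$ by adjoining the red triple $\{\phi(u), \phi(v), \phi(w)\}$ (or, if $w \notin V(H')$, by picking some red common neighbour of $\phi(u)$ and $\phi(v)$ lying outside $\phi(V(H'))$). If no such extension succeeds, then every red $H'$ yields a blue witness; counting these with the multiplicity bound that each triple of $V$ is reached by at most $N^{|V(H')|-3}$ red copies of $H'$ would push the blue hypergraph past the Zarankiewicz-type bound $\mathrm{ex}(N, K_{n,n,n}^{(3)}) = O(N^{3-1/n^2})$, producing a blue $K_{n,n,n}^{(3)}$ and a contradiction.

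The hard part is quantitative: a naive execution of this plan would force $C - C'$ to grow with $n$, which collapses the target exponent $O_H(n \log n)$. To avoid this, I expect one first needs to pass to a sub-hypergraph in which red copies of $H'$ are much denser, via a dependent-random-choice step selecting a large subset whose pairs mostly have large red codegree, so that the supersaturation step gains a density close to $1$ rather than the weak bound $r(H', K_{n,n,n}^{(3)})^{-|V(H')|}$. The final $n \log n$ factor should then arise as a product of $O_H(1)$ such refinement passes, each losing only a $\mathrm{poly}(n)$ factor in the vertex count. Setting up the refinement so that it is compatible with the new-pair ordering on $H$---so that, after refinement, the new pair $\{u,v\}$ can still be realised with large red codegree inside the refined subset---is the key technical step, and where I expect the bulk of the work to lie.
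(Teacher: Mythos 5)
You have correctly identified the key structural observation---that the new shadow pair $p_m = \{u,v\}$ is unconstrained by $H' = H - e_m$, so after a supersaturation step the choices of $u$ and $v$ decouple---and the induction-on-edges skeleton is the right shape. But there is a genuine gap in the quantitative plan, and you have sensed it yourself: running the induction directly against the target $K_{n,n,n}^{(3)}$ cannot produce the $n\log n$ exponent. Your base case $m=0$ gives a trivial bound, $H$ has only $O_H(1)$ edges, and so an induction where each step adds $O_H(1)$ to the exponent would end at $2^{O_H(1)}$, not $2^{O_H(n\log n)}$; the $n\log n$ has to enter from somewhere, and nothing in your sketch supplies it. The structural reason the direct induction does not close is that the failure-to-extend event only witnesses a single apex $z_0$ together with large sets $S,T$ having no red edge through $z_0$---a blue $K_{1,|S|,|T|}^{(3)}$ pattern---which does not contain a blue $K_{n,n,n}^{(3)}$, so the contradiction you want is not available.

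The paper resolves this by decoupling the two difficulties. First it proves by induction on edges that $r(H, K_{1,n,n}^{(3)}) = n^{O_H(1)}$ (Lemma~\ref{lem:1skeleton}); here the supersaturation argument closes cleanly precisely because the failure event is a blue $K_{1,n,n}^{(3)}$, exactly the forbidden object, and each edge added multiplies the bound by a polynomial-in-$n$ factor, so $O_H(1)$ steps keep it polynomial. Second, and separately, a single supersaturation-plus-pigeonhole step proves the general inequality $r(H, K_{n,n,n}^{(3)}) \le r(H, K_{1,n,n}^{(3)})^{3n}$ (Lemma~\ref{lem:K1nn}), and this is where the $n\log n$ in the exponent is created---polynomial raised to the power $3n$. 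The dependent-random-choice refinement you gesture at is not needed and, as sketched, does not obviously assemble into a proof; the missing move is simply to change the inductive target from $K_{n,n,n}^{(3)}$ to $K_{1,n,n}^{(3)}$.
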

Note that \cref{thm:nlognsupersaturation} immediately implies that \cref{thm:mainmpair} is tight for links of odd cycles and for tight cycles, as these may clearly be constructed by repeatedly adding $3$-edges that add at least one $2$-edge to the shadow. However, we also note that, apart from finitely many exceptions in the case of tight cycles, the stronger bound $r(H, K_{n}^{(3)}) \le 2^{O_H(n \log n)}$ was already known in both of these cases~\cite{EH, Mu}. 

Our second main result addresses a longstanding problem in the area, asking whether $r(F, K_n^{(3)})$ is polynomial in $n$ if $F$ is a \emph{linear} $3$-graph (also known as a linear triple system, partial triple system or partial Steiner system), that is, a $3$-graph where any two edges share at most one vertex. That this should be the case was something of a folklore conjecture, but had been resistant to attack. The following result shows why. 

\begin{thm}\label{thm:linearsteppingup}
For all sufficiently large $k$, there exists a linear $3$-graph $F$ on $k$ vertices such that $$r(F, K_{n}^{(3)}) > 2^{c(\log n)^{\sqrt{k}/(128\log^5k)}},$$ where $c > 0$ depends only on $k$.
\end{thm}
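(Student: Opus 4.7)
The plan is to build the linear $3$-graph $F$ and a $2$-coloring of $K_N^{(3)}$ with no red $F$ and no blue $K_n^{(3)}$ simultaneously, via an iterated stepping-up construction. Setting $t=\lfloor \sqrt{k}/(128\log^5 k)\rfloor$, I would construct a nested sequence of linear $3$-graphs $F_0\subsetneq F_1\subsetneq\cdots\subsetneq F_t=:F$ and $3$-graphs $G_0,\ldots,G_t$ on $N_0<N_1<\cdots<N_t$ vertices, such that for each $i$ the $3$-graph $G_i$ is $F_i$-free and has independence number less than $n$. The base case takes a small linear $F_0$ (for example, a loose triangle) and a standard probabilistic or random-greedy coloring giving an $F_0$-free $G_0$ on $N_0\geq n^{\Omega(1)}$ vertices with $\alpha(G_0)<n$. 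At each inductive step I would inflate $N_i\mapsto N_{i+1}\approx N_i^{\Theta(\log n)}$ at the price of attaching $O(\log^5 k)$ new vertices to $F_i$, so that after $t$ iterations one has $|V(F)|\leq t\cdot O(\log^5 k)\leq k$ and $|V(G_t)|\geq 2^{c(\log n)^t}$, which matches the target lower bound. Padding $F_t$ with isolated vertices up to exactly $k$ vertices yields the linear $3$-graph $F$ asserted by the theorem.

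The heart of the argument is the inductive step. Given $G_i$ on $N_i$ vertices, I would define $G_{i+1}$ on a vertex set consisting of sequences of length $m\approx \log n$ over $V(G_i)$, using a coloring rule in the spirit of Erd\H{o}s--Hajnal stepping-up within a single uniformity: given three sequences, locate the ``most significant'' coordinate at which they first separate and inherit the color of $G_i$ on that coordinate triple, with standard tie-breaking conventions on the less significant coordinates. A stepping-up-style analysis should show that a red copy of a suitably chosen extension $F_{i+1}$ of $F_i$ in $G_{i+1}$ would force a red copy of $F_i$ already in $G_i$, contradicting the inductive hypothesis; similarly, a blue $K_n^{(3)}$ in $G_{i+1}$ would project down to a blue $K_n^{(3)}$ in $G_i$. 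The extension $F_{i+1}$ is obtained from $F_i$ by attaching a small ``gadget'' along $O(\log^5 k)$ fresh vertices that mirrors the stepping-up lift, and the gadget is chosen so that each new vertex lies in few new edges and those edges pairwise share at most one vertex, keeping $F_{i+1}$ linear.

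The main obstacle is maintaining linearity of $F$ while simultaneously driving the independence number of $G$ down through each iteration. Standard stepping-up is tailored to clique targets, where any sub-clique produced by the lift is automatically a clique again and no delicate intersection pattern needs to be preserved; here the target must remain very sparse even as $G$ becomes quite dense, and forbidden configurations upstairs pull back to increasingly dense (non-linear) forbidden configurations downstairs unless the gadget attached at each step is chosen with care. The $\log^5 k$ polylogarithmic factor absorbs union bounds over the roughly $k^{O(1)}$ possible embedding patterns that could witness a copy of $F$ in the lifted coloring at each inductive step, as well as the tie-breaking degrees of freedom in the stepping-up rule. Executing this carefully at all $t$ levels simultaneously, and verifying at each level that the gadget can be realized without creating any non-linear pair of edges among the accumulated vertices of $F_i$, is where the bulk of the technical work lies.
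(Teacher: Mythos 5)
Your proposal takes a genuinely different route from the paper, and unfortunately the differences are where the gaps lie. The paper does \emph{not} iterate stepping-up or build $F$ in stages. It constructs the linear $3$-graph $F$ once and for all by a Krivelevich-style alteration argument (a random $3$-graph with edge probability $\Theta(1/k)$, then deleting a maximal family of copies of the $4$-vertex, $2$-edge configuration), certifying three pseudorandomness properties (Lemma~\ref{steiner}). Separately, it builds an ordinary \emph{$2$-uniform} random graph $G$ on $m = c(n/\log n)^{\sqrt{k}/(64\log^5 k)}$ vertices with edge probability $m^{-64\log^5 k/\sqrt{k}}$, so that $G$ has independence number below $n$ and every $r$-vertex subset, for $r$ in a suitable range, spans at most $r^2/\log^3 k$ edges (Lemma~\ref{cyclecomplete}). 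The whole construction is then a \emph{single} Erd\H{o}s--Hajnal step from that $2$-graph to a $3$-uniform coloring on $2^m$ vertices: color $v_1<v_2<v_3$ red iff $\delta_1>\delta_2$ and $\delta_1\delta_2\in E(G)$. The $\sqrt{k}/\log^5 k$ in the exponent comes entirely from how sparse $G$ can be taken while keeping its independence number below $n$, not from iterating anything.

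The more serious problem is that key steps of your plan do not hold as stated. On the blue side, you assert that a blue $K_n^{(3)}$ in $G_{i+1}$ ``projects down'' to a blue $K_n^{(3)}$ in $G_i$; this is false for the product construction you describe, for any natural tie-breaking rule. Taking $V(G_{i+1})=V(G_i)^m$ and coloring a triple by looking at the first separating coordinate, one immediately hits triples whose entries there are of the form $a,a,b$, and whatever you declare these to be, one checks that independent sets multiply across fibers (as in lexicographic powers of graphs), giving $\alpha(G_{i+1})$ of order $\alpha(G_i)^m \gg n$ rather than $<n$. The paper sidesteps this entirely: on the binary-encoding step, the two relevant quantities $\delta_1=\delta(v_1,v_2)$ and $\delta_2=\delta(v_2,v_3)$ are \emph{always} distinct (Property I), so there is no tie to break, and the cost is a controlled increase in the blue clique threshold from $n$ to $\binom{m+n-1}{n-1}$ rather than an explosion of $\alpha$. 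Any iterated version must absorb an analogous loss at every one of your $t\approx \sqrt{k}/\log^5 k$ levels, and you have not shown that such losses can be kept small enough to compound $t$ times. On the red side, the entire content of the theorem is the interaction between the red-coloring rule and the fixed $F$; your proposal leaves the gadget $F_{i+1}\setminus F_i$ completely unspecified and flags it as ``where the bulk of the technical work lies'' without any indication of how to keep $F$ linear through the iterations. In the paper, this is precisely what the three pseudorandomness properties of $F$ in Lemma~\ref{steiner} (together with the partition into $A_1,\ldots,A_r,S_t,B_s,\ldots,B_1$) are designed to handle, and that is a delicate, global argument about a single fixed $F$, not a per-level gadget. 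Finally, the $\log^5 k$ in your parameter budget is attributed to ``union bounds over $k^{O(1)}$ embedding patterns,'' but in the paper this exponent has a concrete origin in the chosen edge probability of $G$ and the constraint that small subsets of $G$ be very sparse; there is no analogous derivation in your sketch.
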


 In contrast, a result of Fox and He~\cite{FH} shows that if $F$ is a linear $3$-graph, then the Ramsey number $r(F, K_{n,n,n}^{(3)})$ grows polynomially in $n$. Combined, these results give the first examples of $3$-graphs $F$ for which the Ramsey numbers $r_3(F, K_{n,n,n}^{(3)})$ and $r_3(F, K_n^{(3)})$ are known to have very different growth rates.  

Theorem~\ref{thm:linearsteppingup} also has a further corollary. If, for a triple system $F$, we define $m_k(F)$ to be the minimum number of edges in an $F$-free triple system with chromatic number at least $k$, Bohman, Frieze and Mubayi~\cite{BFM} conjectured that there exists a linear triple system $F$ for which $m_k(F) = k^{3+o(1)}$.  It is easy to see that $m_k(F) \ge k^{3+o(1)}$, since any triple system with $f$ edges has chromatic number $O(f^{1/3})$, so this conjecture would be asymptotically tight. Theorem~\ref{thm:linearsteppingup} settles the conjecture, since it produces $N$-vertex $F$-free 3-graphs with independence number $n=N^{o(1)}$ and, hence, chromatic number at least $k = N^{1-o(1)}$. Moreover, the number of edges is trivially at most ${N \choose 3}=k^{3+o(1)}$.

We now proceed to our proofs, beginning with \cref{thm:mainmpair}. We will return to \cref{thm:linearsteppingup} in Section~\ref{sec:steppingupintro} and conclude in Section~\ref{sec:conclusion} with some further remarks and conjectures. We note that, unless otherwise indicated, all of our logarithms are natural logarithms.

\section{Proof of \cref{thm:mainmpair}}\label{sec:proofmpair}

In this section, we prove \cref{thm:mainmpair}, saying that if $\mpair(H) \geq \frac 12$, then $r(H, K_{n,n,n}^{(3)}) \geq 2^{\Omega_H(n \log n)}$. Our first task is to formally define $\mpair(H)$, which we do in Section~\ref{sec:avoid}. Our construction is described in detail in Section~\ref{sec:pair} and then, in Section~\ref{sec:endgame}, we show that it has the required properties. In Section~\ref{sec:mpaircomputation}, we show that $\mpair(H) \ge \frac 12$ for all links of odd cycles and all tight cycles whose length is not a multiple of $3$, so that our results do indeed apply in these cases. Finally, in Section~\ref{sec:supersat}, we give the short proof of \cref{thm:nlognsupersaturation}, which says that \cref{thm:mainmpair} is tight for a large class of $3$-graphs.

\subsection{Avoidability}\label{sec:avoid}

In this section, we formally define the ``pair density'' statistic $\mpair(H)$ that was mentioned in the introduction and give a characterization of all $3$-graphs $H$ with $\mpair(H) \ge \frac{1}{2}$. We will call such $3$-graphs ``avoidable''. However, before getting to that, we need several other definitions.

\begin{definition}
A hypergraph is \emph{ordered} if its vertex set is a linearly ordered set and \emph{nonempty} if it has at least one edge.
\end{definition}

\begin{definition}
    An \textit{oriented $3$-graph} $G$ is a pair of sets $(V,E)$ such that $E\subseteq V^3$ contains at most one of the six permutations of any given ordered triple $(v_1, v_2, v_3) \in V^3$.
\end{definition}

\begin{definition}
    Given an ordered $3$-graph $H$ and an oriented $3$-graph $G$, a \textit{pair homomorphism} from $H$ to $G$ is a function $f:\partial H\to V(G)$ such that, for every edge $\{u,v,w\} \in E(H)$ with $u<v<w$,  $f(uv)$, $f(vw)$ and $f(wu)$ are distinct and $(f(uv),f(vw),f(wu)) \in E(G)$. The map sending $uvw$ to $(f(uv),f(vw),f(wu))$ induces another map, which we also call $f$, from $E(H)$ to  $E(G)$. Write $f(H)$ for the subgraph of $G$ whose edge set is the image of this map and whose vertex set is $f(\partial H)$. Note that $v(f(H)) = |f(\partial H)|$ and
    $e(f(H))=|f(E(H))|$.
    \end{definition}

\begin{definition}
    If $H$ is a nonempty  ordered $3$-graph, define
    \[
    \mpair(H)\coloneqq \min_{f:\partial H\to V(G)} \max_{H'\subseteq H}\frac{e(f(H'))}
       {v(f(H'))},
    \]
   where $f$ ranges over all pair homomorphisms  from $H$ to any oriented $G$ and $H'$ ranges over all nonempty subhypergraphs of $H$. We define $\mpair(H)$ for an unordered $3$-graph $H$ to be the minimum of $\mpair(H)$ over all orderings of $H$.
\end{definition}

Since $f(H)$ has no isolated vertices by definition, $v(f(H)) \le \sum \hbox{deg}_{f(H)}(v) = 3e(f(H))$ and consequently $\mpair(H) \ge 1/3$ for all nonempty $H$. On the other hand, $\mpair(K_s^{(3)}) = \Theta (s)$. Indeed,  the upper bound follows immediately from the Kruskal--Katona theorem and for the lower bound we may argue as follows. Suppose that $f$ is a surjective pair homomorphism from $K_s^{(3)}$ to an oriented graph $G$. Then, for any incident pair $e$ and $e'$ in $\partial K_s^{(3)}$, we know that $f(e) \ne f(e')$ and $e$ and $e'$ lie in an edge of $G$. As we vary $e'$ over all elements of $\partial K_s^{(3)}$ incident to $e$, we see that $f(e)$ lies in at least $\Omega(s)$ edges of $G$. Hence, the minimum degree of $G$ is at least $\Omega(s)$, so $e(G)/v(G) = \Omega(s)$ as well.

In order to say what it means for a $3$-graph to be avoidable, we must first recall that a Berge cycle is a hypergraph with (distinct) vertices $v_1, \ldots, v_t$ and distinct edges $e_1, \ldots, e_t$, where $\{v_i, v_{i+1}\} \subseteq e_i$ and indices are taken modulo $t$.

\begin{definition}
We say that a $3$-graph $H$ is \textit{avoidable} if every $3$-graph $G$ for which there is a pair homomorphism $f$ from $H$ to $G$  contains a Berge cycle.
\end{definition}

The following lemma shows that being avoidable is equivalent to having pair density at least $1/2$.

\begin{lemma}\label{lem:avoidmpair}
    A $3$-graph $H$ is avoidable if and only if $\mpair(H) \ge \frac{1}{2}$.
\end{lemma}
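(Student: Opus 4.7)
The plan is to interpret avoidability as a density condition on subhypergraphs of the image $f(H)$ and match it against the definition of $\mpair$. Since any supergraph of a Berge-cycle-containing hypergraph still contains a Berge cycle, I would first note that $H$ is avoidable if and only if, for every ordering of $V(H)$ and every pair homomorphism $f$ from this ordered $H$ to an oriented $3$-graph, the image $f(H)$ itself contains a Berge cycle (one may always take $G = f(H)$ in the definition).

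The core step is the following folklore structural lemma, which I would prove next: a nonempty $3$-uniform hypergraph $K$ contains a Berge cycle if and only if some nonempty subhypergraph $K' \subseteq K$ satisfies $e(K')/v(K') \geq \tfrac{1}{2}$. The forward direction is immediate from taking $K'$ to be the Berge cycle itself, which has $t$ vertices and $t$ edges and thus ratio $1$. For the reverse direction, I would introduce the bipartite incidence graph $B_K$ with vertex classes $V(K)$ and $E(K)$ and edges encoding vertex-edge incidence; Berge cycles of $K$ correspond exactly to cycles of $B_K$ (a bipartite cycle of length $2t$ alternates between $t$ distinct vertices and $t$ distinct edges of $K$, with consecutive pairs lying in the same hyperedge). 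Thus, if $K$ is Berge-acyclic then $B_K$ is a forest, and the same holds for every subhypergraph $K' \subseteq K$. Counting the $3\,e(K')$ incidences in $B_{K'}$ against the forest bound $v(K') + e(K') - 1$ yields $e(K') \leq (v(K') - 1)/2 < v(K')/2$, as required.

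Finally, I would bookkeep the correspondence between subhypergraphs. Given a nonempty $K' \subseteq f(H)$, I would remove any isolated vertices of $K'$ (which only increases $e/v$) and set $H'$ to consist of exactly those edges $e \in E(H)$ with $f(e) \in E(K')$. Then $f(H') = K'$ on the nose, and since $v(f(H')) = |f(\partial H')|$ equals the number of vertices covered by $E(K')$ (every pair in $\partial H'$ comes from some edge of $H'$, and the three images of the three pairs in an edge $\{u,v,w\}$ are precisely the three vertices of $f(\{u,v,w\})$), one has $v(f(H')) = v(K')$ and $e(f(H')) = e(K')$. Conversely, for any $H' \subseteq H$, $f(H') \subseteq f(H)$. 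Combining these steps,
\[
H \text{ avoidable} \iff \forall \text{ ordering}, \ \forall f, \ \max_{\emptyset \neq H' \subseteq H} \frac{e(f(H'))}{v(f(H'))} \geq \tfrac{1}{2} \iff \mpair(H) \geq \tfrac{1}{2}.
\]

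The main obstacle is setting up cleanly the equivalence between Berge cycles in $K$ and cycles in the bipartite incidence graph $B_K$, since this is what converts a combinatorial ``forbidden substructure'' into a density inequality usable within the definition of $\mpair$. Once this structural characterization is in hand, the rest amounts to carefully tracking that vertex counts are preserved by passing between subhypergraphs of $H$ and subhypergraphs of $f(H)$.
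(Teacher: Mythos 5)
Your argument is correct and follows essentially the same route as the paper: both hinge on the observation that a nonempty $3$-graph contains a Berge cycle if and only if some nonempty subhypergraph has edge-to-vertex ratio at least $\tfrac{1}{2}$, which you prove via the bipartite incidence graph while the paper deduces it from an equivalent inductive edge count (its \cref{treelemma}). One small correction: a Berge cycle with $t$ edges in a $3$-uniform hypergraph spans up to $2t$ vertices (each hyperedge $e_i$ may contain a third vertex beyond $v_i$ and $v_{i+1}$), so the subhypergraph formed by its edges has ratio at least $\tfrac{1}{2}$, not exactly $1$; fortunately this is all your argument requires.
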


\begin{proof}
    This boils down to the observation that a $3$-graph $G$ contains a subhypergraph $G'$ with $e(G')/v(G')\ge \frac{1}{2}$ if and only if it contains a Berge cycle. Indeed, any Berge cycle satisfies this property and, conversely, any Berge acyclic hypergraph satisfies $v(G) > 2e(G)$ by applying \cref{treelemma} below to each connected component.
\end{proof}

The following definition is also of interest.

\begin{definition}
    A $3$-graph $H$ is $123$-inducible if there is an ordering of $V(H)$ and a labelling of $\partial H$ by  $1$, $2$ and $3$ in such a way that, for every  $uvw \in E(H)$ with $u<v<w$, $uv$ is labelled $1$, $vw$ is labelled $2$ and $uw$ is labelled $3$. 
\end{definition}

\begin{lemma}
    $H$ is $123$-inducible if and only if $\mpair(H)=\frac{1}{3}$.
\end{lemma}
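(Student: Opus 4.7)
My plan is to prove both directions by translating $123$-inducibility into the statement that the image $f(H)$ is a matching (i.e., the edges of $f(E(H))$ are pairwise vertex-disjoint) in some oriented $3$-graph $G$, since the constraint $e(f(H))/v(f(H)) \le 1/3$ is essentially equivalent to that matching condition.

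For the ``if'' direction, I would start from a $123$-inducible ordering of $V(H)$ together with its labeling $\ell:\partial H\to\{1,2,3\}$ and use it to build a single pair homomorphism witnessing $\mpair(H)\le 1/3$. Take $G$ to be the oriented $3$-graph on the three vertices $\{1,2,3\}$ with the single edge $(1,2,3)$, and set $f\coloneqq\ell$. The $123$-condition immediately makes $f$ a pair homomorphism, and every edge of $H$ maps to the same triple $(1,2,3)$, so for every nonempty $H'\subseteq H$ we have $v(f(H'))=3$ and $e(f(H'))=1$, giving ratio exactly $1/3$. Combined with the general bound $\mpair(H)\ge 1/3$ observed in the paper, this yields $\mpair(H)=1/3$.

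For the ``only if'' direction, I would begin with an ordering of $V(H)$ and a pair homomorphism $f:\partial H\to V(G)$ achieving $\mpair(H)=1/3$, and recover the $123$-labeling from it. Every pair in $\partial H$ sits in some edge of $H$, so each vertex of $f(H)$ has degree at least $1$ in $f(H)$; hence $v(f(H))\le 3e(f(H))$, which together with $e(f(H))/v(f(H))\le 1/3$ forces equality and shows that every vertex of $f(H)$ has degree exactly $1$. In other words, the triples in $f(E(H))$ form a matching in $G$. I would then define $\ell(p)\in\{1,2,3\}$ to be the position of $f(p)$ inside the unique image edge containing it. For each edge $uvw\in E(H)$ with $u<v<w$, reading off the positions of $f(uv),f(vw),f(uw)$ in the triple $(f(uv),f(vw),f(uw))\in f(E(H))$ gives $\ell(uv)=1$, $\ell(vw)=2$, $\ell(uw)=3$, exactly as required for $123$-inducibility.

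The only delicate point is well-definedness of $\ell$ when some pair $p$ belongs to several edges of $H$: I need all those edges to assign $p$ the same position. This is where the matching property really earns its keep, since $f(p)$ occupies a unique position in a unique edge of $f(H)$, forcing every edge of $H$ containing $p$ to place $f(p)$ at that same position. Once this is checked, the rest of the argument is bookkeeping, and I expect no other obstacles.
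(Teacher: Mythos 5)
Your proof is correct and follows essentially the same route as the paper: the forward direction interprets the $123$-labeling as a pair homomorphism to a single-edge oriented $3$-graph, and the reverse direction deduces from $3e(f(H))=v(f(H))$ that $f(E(H))$ is a matching and reads the labels off the coordinate positions. Your explicit treatment of the well-definedness of $\ell$ is a welcome elaboration of a point the paper's terse version leaves implicit, but the underlying argument is the same.
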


\begin{proof}
    As a $123$-coloring is a pair homomorphism yielding one edge and three vertices (and, clearly, $3e \geq v$ for any $3$-graph without isolated vertices), any $123$-inducible graph has $\mpair$ equal to $1/3$. Conversely, if $\mpair(H)=\frac{1}{3}$, there is a pair homomorphism $f$ such that $3e(f(H))=v(f(H))$. But then $f(H)$ must consist entirely of disjoint triples, whose vertices we may color with the colors $1,2,3$ in accordance with the ordering on $v(H)$. That is, $H$ is $123$-inducible.
\end{proof}

We note that Theorem~\ref{thm:EHmpair}, the result of Erd\H{o}s and Hajnal~\cite{EH} quoted in the introduction, was originally proved in terms of $123$-inducibility and then states that if a $3$-graph $H$ is not $123$-inducible, then $r(H, K_{n,n,n}^{(3)}) \geq 2^{\Omega_H(n)}$. The proof is rather simple: take a random coloring of the edges of $K_N^{(2)}$ in three colors $1$, $2$ and $3$ and then color an edge $uvw$ of $K_N^{(3)}$ with $u < v < w$ in red if and only if $uv$ is colored $1$, $vw$ is colored $2$ and $uw$ is colored $3$. It is easy to verify that there exists a positive constant $C$ such that, with positive probability, this coloring contains no red copy of any $3$-graph $H$ which is not $123$-inducible and no blue $K_{n,n,n}^{(3)}$ with $n = C \log N$.

\subsection{The construction}\label{sec:pair}

In this section, we describe our construction for \cref{thm:mainmpair}. In light of \cref{lem:avoidmpair}, it will suffice to show the following.

\begin{thm}\label{thm:main}
    If $H$ is avoidable, then $r(H, K_{n,n,n}^{(3)}) \ge 2^{\Omega_H(n \log n)}$.
\end{thm}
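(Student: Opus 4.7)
The plan is to build a random 2-coloring of $K_N^{(3)}$ with $N = 2^{c_H n \log n}$ that contains no red copy of $H$ deterministically and, with positive probability, no blue copy of $K_{n,n,n}^{(3)}$. The construction uses a fixed auxiliary Berge-acyclic oriented $3$-graph $G$ on vertex set $[t]$, where $t=t(H)$ is chosen depending on $H$ (and possibly tuned against $n$). Pair colors are sampled as $\chi:\binom{[N]}{2}\to[t]$ uniformly and independently across pairs, and a triple $\{u,v,w\}$ with $u<v<w$ is declared \emph{red} iff $(\chi(uv),\chi(vw),\chi(uw))\in E(G)$.

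Red $H$-freeness will hold identically. A red copy of $H$ would correspond to an injection $\iota:V(H)\to[N]$ whose induced pair coloring $f(uv):=\chi(\iota(u)\iota(v))$ is a pair homomorphism from $H$ into $G$, with the ordering on $V(H)$ inherited from $[N]$. By \cref{lem:avoidmpair}, avoidability gives $\mpair(H)\geq 1/2$ for every ordering of $V(H)$, so any such $f$ would produce a subhypergraph $f(H')\subseteq G$ with $e(f(H'))/v(f(H'))\geq 1/2$. Berge-acyclicity of $G$ forces $v>2e$ on each connected component (as in the proof of \cref{lem:avoidmpair}), ruling this out. Hence no red $H$ occurs in any outcome of $\chi$.

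The main task is to bound, for fixed $A<B<C$ of size $n$, the probability $p_{A,B,C}$ that no triple in $A\times B\times C$ is red, strongly enough that a union bound over the $\binom{N}{n}^3 \leq 2^{O(n^2\log n)}$ choices of $(A,B,C)$ still goes to zero. Conditioning on the pair colors on $A\times B$ and $B\times C$, the remaining randomness on $A\times C$ factorizes, yielding
\[p_{A,B,C} \;=\; \mathbb{E}\!\left[\,\prod_{(a,c)\in A\times C}\left(1-\frac{|S_{ac}|}{t}\right)\right],\qquad S_{ac}=\{z\in[t]:\exists b\in B,\ (\chi(ab),\chi(bc),z)\in E(G)\}.\]
The goal is to show $p_{A,B,C}\leq 2^{-\Omega_H(n^2\log n)}$, after which a union bound and a small choice of $c_H$ yield the theorem.

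The hard part — and the main obstacle — is obtaining the $\log n$ factor in the exponent. A routine Janson or second-moment estimate only gives $2^{-\Omega(n^2)}$, which via the $\binom{N}{n}^3$ union bound reproduces merely the Erd\H os--Hajnal-type bound $N=2^{\Omega_H(n)}$. To gain the extra $\log n$, I expect to exploit the linear structure of $G$ (Berge-acyclicity implies each pair of $[t]$ lies in at most one edge of $G$) and a careful co-tuning of $t$ with $n$, combined with a concentration argument showing that $|S_{ac}|$ is close to $t$ for a $1-o(1)$ fraction of pairs $(a,c)$ with very high probability over the conditioning — so that each factor $(1-|S_{ac}|/t)$ contributes roughly $1/n^{\Omega(1)}$ rather than a constant. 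This concentration step is the key conceptual and technical novelty of the proof.
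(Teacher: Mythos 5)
Your setup (random pair coloring $f:\binom{[N]}{2}\to[t]$, fixed oriented $3$-graph $G$ on $[t]$, and $\chi(uvw)=g(f(uv),f(vw),f(uw))$ for $u<v<w$) is indeed the paper's framework, and your reduction to bounding $p_{A,B,C}$ via conditioning on $A\times B$ and $B\times C$ is a reasonable first move. However, there is a fundamental obstruction to the specific choice you propose: you insist that $G$ be \emph{globally} Berge-acyclic. A Berge-acyclic $3$-graph on $[t]$ has fewer than $t/2$ edges, and since each $z\in S_{ac}$ arises from at least one distinct edge of $G$, you always have $|S_{ac}|\le e(G)<t/2$. Hence every factor in your product satisfies $1-|S_{ac}|/t>1/2$, so $p_{A,B,C}>2^{-n^2}$ unconditionally, and the union bound over the roughly $N^{3n}$ choices of $(A,B,C)$ can only succeed when $N\le 2^{O(n)}$. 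No concentration argument can rescue this: the density ceiling is intrinsic to Berge-acyclicity of $G$.

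The paper's key idea, which your proposal misses, is to require only a \emph{local} Berge-acyclicity condition on $G$: among any $k\le\binom{v(H)}{2}$ vertices of $[t]$ there are fewer than $k/2$ red edges (condition 2 in the construction). This is all that is needed to kill red copies of $H$, because the image $f(H)$ of any pair homomorphism has at most $\binom{v(H)}{2}$ vertices, so avoidability gives a forbidden dense subhypergraph on that bounded scale only. Relaxing to a local condition allows $G$ to have roughly $t^{1+\delta}$ edges, which in turn makes it possible to impose the crucial pseudorandomness condition 3 (no blue complete tripartite subgraph on three parts of size $t^{1-\eps}$), and the paper takes $t=n^\eps$ growing with $n$. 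Even with those ingredients in place, the estimate $p_{A,B,C}\le 2^{-\Omega_H(n^2\log n)}$ is not obtained by a one-shot concentration bound; the paper reveals the pair colors incident to the vertices of $B$ one at a time, tracks a potential $S(f_{J_r})=\sum_{(a,c)}|S_{ac}|$, shows at least half the steps must be "slow," and uses condition 3 to show that slow steps admit at most $t^{(2-\Omega(\eps))n}$ extensions. So beyond the acyclicity issue, the "concentration step" you identify as the key missing novelty is in fact a genuinely different (iterative, union-bound-over-slow-steps) argument, not a deviation bound for $|S_{ac}|$ under a single conditioning.
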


\vspace{3mm}

The construction for \cref{thm:main} is as follows. Take $\eps$ sufficiently small in terms of $H$, $c$ sufficiently small in terms of $\eps$ and $n$ sufficiently large in terms of $c$. Let $N = 2^{c n\log n}$ and $t = n^{\eps}$ and define $f:\binom{[N]}{2}\to [t]$ uniformly at random. Pick $g:[t]^3 \to \{\red, \blue\}$ to satisfy:

\begin{enumerate}
    \item The red color in $g$ is an oriented $3$-graph, i.e., for all $e\in [t]^3$ at most one of the six permutations $\sigma(e)$ of $e$ satisfies $g(\sigma(e)) = \red$. In particular, if $e$ has at least two equal coordinates, then $g(e) = \blue$.
    \item Among any $k\le {v(H)\choose 2}$ elements of $[t]$ there are  fewer than $k/2$ red edges. In particular, the red edges in $g$ form a linear $3$-graph.
    \item In blue, $g$ does not have complete balanced oriented  tripartite subgraphs of order $t^{1-\eps}$. That is, for any $X, Y, Z \subseteq [t]$ of size $t^{1-\eps}$, there must be at least one triple $(x,y,z)\in X\times Y \times Z$ for which $g(x,y,z) = \red$.
\end{enumerate}

Thus, the red color in $g$ may be seen as an oriented $3$-uniform analog of a graph with high girth and small independence number. Our coloring $\chi:\binom{N}{3}\to \{\red, \blue\}$ is now defined by $\chi(u,v,w) = g(f(uv),f(vw), f(wu))$ for all $u<v<w$ in $[N]$. Note that condition 2 already implies that $\chi$ contains no red copy of $H$ provided $H$ is avoidable. 

We now prove the existence of a function $g$ satisfying conditions 1-3 before moving on to proving Theorem~\ref{thm:main} in the next section.

\begin{lemma}
    For any $3$-graph $H$, there exists a function $g$ satisfying conditions 1-3 above for any $\varepsilon$ sufficiently small and $t$ sufficiently large in terms of $H$.
\end{lemma}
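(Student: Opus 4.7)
The natural approach is a probabilistic construction with an alteration step. I would define $g$ randomly by independently choosing each unordered $3$-subset of $[t]$ to be red with probability $p$ and, if red, assigning it a uniformly random orientation; all orderings of non-chosen triples and the five non-chosen orderings of chosen triples are set blue. Condition~1 then holds automatically. The parameter $p$ must balance two opposing forces: it must be large enough that condition~3 holds with high probability, yet small enough that the ``bad'' subsets violating condition~2 are sparse enough to delete without spoiling condition~3. I would take $p=t^{-2+\delta}$ with $\delta$ chosen in the window $2\eps<\delta<2(1-3\eps)/(K-2)$, where $K=\binom{v(H)}{2}$; this window is nonempty precisely when $\eps<1/(K+1)$, which explains the need for $\eps$ to be small in terms of $H$.

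For condition~3, fix $X,Y,Z\subseteq[t]$ of size $s=t^{1-\eps}$ and let $R(X,Y,Z)$ count red ordered triples in $X\times Y\times Z$. Writing $R$ as a sum of independent indicators (one per unordered triple with some permutation in $X\times Y\times Z$) gives $\mathbb{E}[R]=\Theta(ps^3)=\Theta(t^{1+\delta-3\eps})$. A Chernoff bound combined with a union bound over the $\binom{t}{s}^3=e^{O(\eps t^{1-\eps}\log t)}$ triples $(X,Y,Z)$ shows that, as long as $\delta>2\eps$, with overwhelming probability every $(X,Y,Z)$ satisfies $R(X,Y,Z)\geq \tfrac12\mathbb{E}[R]=\Omega(t^{1+\delta-3\eps})$.

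For condition~2, call a $k$-subset \emph{bad} if it contains at least $k/2$ red edges. A first-moment estimate gives $\mathbb{E}[\#\{\text{bad }k\text{-subsets}\}]\leq\binom{t}{k}\binom{\binom{k}{3}}{\lceil k/2\rceil}p^{\lceil k/2\rceil}=O_k(t^{\delta k/2})$, which is maximized at $k=K$ and sums to $O_K(t^{\delta K/2})$; Markov's inequality then gives that with probability at least $1/2$ the actual count is within a factor of two of this. On a realization simultaneously satisfying this Markov bound and the union bound from the previous paragraph (which exists since the intersection has positive probability), I would delete every red edge that lies in some bad $k$-subset with $k\leq K$. This removes at most $K^3\cdot O_K(t^{\delta K/2})=O_K(t^{\delta K/2})$ red edges and destroys every bad subset, without creating new ones since deletion only decreases red counts.

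After deletion, condition~1 persists (only edges are removed), condition~2 holds by construction, and the key check is condition~3. The choice of $\delta$ was made precisely so that $\delta K/2<1+\delta-3\eps$, which ensures the total number of deletions, $O_K(t^{\delta K/2})$, is $o(t^{1+\delta-3\eps})$ and hence negligible compared to the red count in every $(X,Y,Z)$ established above; at least one red edge therefore survives in each $(X,Y,Z)$. The main obstacle is the tight window for $\delta$: the lower bound $\delta>2\eps$ comes from needing enough red per $(X,Y,Z)$ to absorb the entropy of choosing $(X,Y,Z)$, while the upper bound $\delta<2(1-3\eps)/(K-2)$ comes from requiring deletions to be negligible compared to that same red count, and these constraints squeeze together exactly at $\eps=1/(K+1)$.
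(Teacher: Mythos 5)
Your proof is correct, but it takes a genuinely different alteration route from the paper. The paper works on $2t$ vertices, bounds the \emph{expected number} of bad configurations of all three types (pairs of red-colored permutations of the same triple, $(k,k/2)$-configurations, and blue tripartite triples) by $t/3$ each using only first moments, and then deletes one vertex per surviving bad configuration, landing back at $t$ vertices with all properties intact. You instead work on $t$ vertices from the start, make condition 1 automatic by orienting red unordered triples at random, fix condition 2 by deleting red edges inside bad $k$-subsets, and then must show condition 3 survives those deletions. This last step forces you to upgrade the first-moment bound for condition 3 to a Chernoff concentration argument (so that \emph{every} $(X,Y,Z)$ has $\Omega(t^{1+\delta-3\eps})$ red triples, not merely that bad $(X,Y,Z)$ are rare), and it is the source of your carefully balanced window $2\eps < \delta < 2(1-3\eps)/(K-2)$. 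The paper's vertex-deletion approach sidesteps this by never touching the red edges, so it only needs the three first-moment bounds to sum to less than $t$; its parameter constraint $2\eps < \delta < 2/K$ is essentially equivalent. Both arguments are sound. The paper's is shorter and avoids concentration inequalities, while yours gives the additional information that, after alteration, each triple of size-$t^{1-\eps}$ sets in fact contains many red triples rather than just one. One small point worth making explicit in your write-up: when $X,Y,Z$ overlap, the ordered triples with a repeated coordinate are forced blue (condition 1), so $\mathbb{E}[R(X,Y,Z)]$ is only $\Theta(ps^3)$ after discarding those $O(s^2)$ triples; you clearly had this in mind, but the paper flags it explicitly with the ``$-O(t^2)$'' correction in its exponent.
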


\begin{proof}
    We would like to construct a function $g$ with the required properties. First, note that we must have $g(i,i,i) = g(i,i,j)=g(i,j,i)=g(j,i,i) = \blue$ for all $i,j$. 
    To color the remaining edges, we use the alteration method. Let $g:[2t]^3\to \{\red,\blue\}$ be random, where $g(i,j,k)=\red$ with probability $p = t^{-2+\delta}$ independently for each triple $(i,j,k)$. The expected number of pairs of permutations of the same triple which are both colored red is at most
    \[\binom{2t}{3} \binom{6}{2} p^2 = O(t^3 p^2) < t/3\]
    and the expected number of red $(k,k/2)$-configurations is at most 
    \[
    \binom{2t}{k}\binom{k^3}{k/2}p^{k/2} = O_k(t^k p^{k/2}) < t/3
    \]
    if $\delta$ is sufficiently small and $t$ is sufficiently large in terms of $|V(H)|$. The expected number of blue tripartite $X, Y, Z\subset [2t]$ of size $|X|=|Y|=|Z|=t^{1-\eps}$ is at most
    \[
    (2t)^{3t^{1-\eps}}(1-p)^{t^{3-3\eps}-O(t^2)} \le e^{O(t^{1-\eps}\log t) - t^{1-3\eps+\delta}} < t/3
    \]
    if $\eps$ is sufficiently small compared to $\delta$. Here the $O(t^2)$ subtracted in the exponent accounts for the possibility that $X$, $Y$, $Z$ overlap, in which case up to $O(t^2)$ triples $(x,y,z)$ with a repeated coordinate were colored blue in advance. 
    
    Thus, in total, there exists a sample of $g$ for which the number of bad red and blue configurations is together less than $t$, so we may delete $t$ vertices from $[2t]$ to obtain the desired $g$.
\end{proof}

\subsection{Proof of \cref{thm:main}} \label{sec:endgame}
We note that $g$ is fixed and all probabilistic statements are with respect to the random choice of $f$. By construction, for all choices of $f$, the coloring $\chi$ is guaranteed to contain no red copy of $H$. It therefore remains to show that with positive probability $\chi$ contains no blue copy of $K_{n,n,n}^{(3)}$.  We remark that the only property of $g$ that we will use in the proof of this statement is property 3, which will be used in the proof of Claim 2 below. 

Observe that every $K_{n,n,n}^{(3)}$ contains a complete tripartite subgraph with parts $I$, $J$, $K$ of order $n/3$ such that $I < J < K$ (that is, for all $i\in I, j\in J, k\in K$, $i<j<k$). Thus, up to this factor of three that we will ignore, it suffices to show that $\chi$ contains no blue copy of $K_{n,n,n}^{(3)}$ whose parts $I, J, K$ satisfy $I < J < K$. By taking a union bound over all $N^{3n}$ possible choices of $I, J, K$, we see that it suffices to show the following.

\begin{lemma}\label{lem:blue}
    If $I < J < K$ are three $n$-element subsets of $[N]$, then the probability that $\chi(i,j,k)=\blue$ for all $(i,j,k)\in I\times J \times K$ is at most $t^{-\eps n^2/50}$.
\end{lemma}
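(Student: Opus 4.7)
The plan is to use the independence of the random labelling $f$ across disjoint pairs. Write $a_j(i) = f(ij)$, $b_j(k) = f(jk)$, and $z_{ik} = f(ik)$; these are independent uniform $[t]$-valued variables, and the event that $\chi$ is $\blue$ on $I \times J \times K$ becomes $(a_j(i), b_j(k), z_{ik}) \in B$ for every $(i,j,k)$, where $B = g^{-1}(\blue)$. Since the $z_{ik}$'s are independent of the $a$'s and $b$'s and of each other, integrating them out first gives
\[
  \PP[E] \;=\; \EE_{\vec a,\vec b}\!\left[\prod_{(i,k)\in I\times K}\frac{|\tilde B_{\vec a(i),\vec b(k)}|}{t}\right],
\]
where $\vec a(i)=(a_j(i))_{j\in J}$, $\vec b(k)=(b_j(k))_{j\in J}$, and $\tilde B_{\vec x,\vec y}=\{z\in[t]:(x_j,y_j,z)\in B\text{ for all }j\in J\}$. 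The $2n$ vectors $\{\vec a(i)\}_{i\in I}\cup\{\vec b(k)\}_{k\in K}$ are mutually independent and uniform on $[t]^n$, since the pairs that determine them are disjoint.

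I would then organise the estimation into two claims. The structural claim, which uses only property~3 of $g$, is that for any $\vec x,\vec y\in[t]^n$, if the projection set $W_{\vec x,\vec y}:=\{(x_j,y_j):j\in J\}\subseteq[t]^2$ contains a combinatorial rectangle $X\times Y$ with $|X|=|Y|=t^{1-\eps}$, then $|\tilde B_{\vec x,\vec y}|<t^{1-\eps}$. Indeed, were $|\tilde B|\ge t^{1-\eps}$, property~3 applied to $(X,Y,\tilde B)$ would produce a $\red$ triple $(x^*,y^*,z^*)\in X\times Y\times \tilde B$; but $X\times Y\subseteq W$ gives some $j$ with $(x_j,y_j)=(x^*,y^*)$, and $z^*\in\tilde B$ forces $(x^*,y^*,z^*)\in B$, a contradiction.

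The probabilistic claim handles $W_{ik}$ for random $\vec a,\vec b$. The $n$ pairs $(a_j(i),b_j(k))$ for $j\in J$ are iid uniform in $[t]^2$, and with $t=n^\eps$ and $\eps<1/2$ we have $n\gg t^2\log t$, so a coupon-collector computation shows that $W_{ik}$ equals all of $[t]^2$, and in particular contains every $t^{1-\eps}\times t^{1-\eps}$ rectangle, with probability at least $1-t^2(1-1/t^2)^n = 1-e^{-\Omega(n^{1-2\eps})}$. On the event that every $W_{ik}$ is full, the structural claim yields $\prod_{(i,k)}|\tilde B_{\vec a(i),\vec b(k)}|/t\le (t^{-\eps})^{n^2}=t^{-\eps n^2}$, well under the target $t^{-\eps n^2/50}$.

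The main obstacle will be controlling the exceptional event that some $W_{ik}$ fails to be full: a naive union bound only gives $n^2 t^2 e^{-n/t^2}=e^{-\Omega(n^{1-2\eps})}$, whereas the target $t^{-\eps n^2/50}=e^{-\Theta(n^2\log n)}$ is far smaller. To bridge this gap I would work with the exponential moment $\EE[t^{\eps X}]$, where $X=|\{(i,k):W_{ik}\text{ is bad}\}|$, exploiting the row/column independence of the indicators $\mathbf{1}[W_{ik}\text{ bad}]$: these depend only on $(\vec a(i),\vec b(k))$, so conditional on $\vec a$ the column sums $\sum_i\mathbf{1}[W_{ik}\text{ bad}]$ are iid across $k\in K$, and a per-column Chernoff-type estimate — together with the large slack between the gain $t^{\eps}$ per good pair and the required $t^{\eps/50}$ per pair in the final bound — should give $\EE[t^{\eps X}]=O(1)$ and hence $\PP[E]\le e\cdot t^{-\eps n^2}\le t^{-\eps n^2/50}$. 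I expect most of the technical effort to go into this concentration step rather than into either of the two claims above.
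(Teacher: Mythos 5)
Your overall plan is a genuinely different route from the paper's. You integrate out $f$ on $I\times K$ once and for all, reducing the problem to the statistic $X = |\{(i,k): W_{ik}\text{ bad}\}|$, whereas the paper reveals $f|_{(I\times J)\cup(J\times K)}$ one $j$ at a time and tracks how the count $S(f_{J_r}) = \sum_{I\times K} f^*_{J_r}(i,k)$ decays, using the ``slow step'' dichotomy to count colorings directly without ever referring to a global concentration statement. Your structural claim (if $W_{ik}$ contains a $t^{1-\eps}\times t^{1-\eps}$ rectangle then $|\tilde B_{ik}| < t^{1-\eps}$) is correct and is the natural translation of property~3 into your framework, and the single-pair coupon-collector estimate is also fine. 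The appeal of your approach is that it isolates the use of property~3 into one clean claim.

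However, the bridging step has a genuine gap, and the specific quantitative claim you make is false. You assert that $\EE[t^{\eps X}]=O(1)$; in fact $\EE[t^{\eps X}]\to\infty$. To see this, let $A$ be the event that $\vec a(1)$ has image of size at most $t^{1-\eps}$. Then $\PP[A]\ge \binom{t}{t^{1-\eps}}^{-1}\cdot\binom{t}{t^{1-\eps}}(t^{1-\eps}/t)^n$, which up to a $e^{O(t^{1-\eps}\log t)}$ factor is $t^{-\eps n}$, and on $A$ every pair $(1,k)$ is bad because $W_{1k}$ lives inside a set of at most $t^{1-\eps}$ rows. Hence $X\ge n$ on $A$, and $\EE[t^{\eps X}]\ge \PP[A]\,t^{\eps n}=e^{\Omega(t^{1-\eps}\log^2 t)}\to\infty$. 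Summing over all subsets of rows pushes this to at least roughly $2^n$. So even with the slack you identify, you cannot target $O(1)$; the most you can hope for is something like $\EE[t^{\eps X}]\le t^{(49/50)\eps n^2}$, which is what the final bound actually requires.

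The deeper issue is the mechanism you propose. It is true that, conditional on $\vec a$, the column sums $S_k=\sum_i \mathbf 1[W_{ik}\text{ bad}]$ are i.i.d., so $\EE[t^{\eps X}\mid\vec a]=(\EE[t^{\eps S_1}\mid\vec a])^n$. But a ``per-column Chernoff-type estimate'' does not apply directly, because conditional on $\vec a$ the indicators $\mathbf 1[W_{ik}\text{ bad}]$ for different $i$ (fixed $k$) all depend on the \emph{same} $\vec b(k)$ and are not independent; $S_1$ is not a sum of independent summands, and its conditional law is governed by the single random vector $\vec b(1)$. Moreover the bound $\EE[t^{\eps S_1}\mid\vec a]\le t^{(1-c)\eps n}$ cannot hold uniformly in $\vec a$ --- if every $\vec a(i)$ has small image, then $S_1=n$ deterministically and the conditional moment equals $t^{\eps n}$ --- so a separate analysis of atypical $\vec a$ (and, by symmetry, atypical $\vec b$) is unavoidable. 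These events are exactly where the exponential moment picks up mass, and you need to show their total contribution is $\le t^{(49/50)\eps n^2}$; this is plausible (I was unable to falsify it), but it requires a genuine argument that is not supplied. The paper's stepwise revelation of $f$ sidesteps all of this by never aggregating the loss across all pairs into a single moment. In short: Claims~1 and~2 of your proposal are correct, but Step~3 as stated has a false intermediate assertion and no mechanism that survives the dependency structure; it is precisely the part you flagged as ``the main technical effort,'' and it is not a routine Chernoff bound.
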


\begin{proof}
We would like to count \textit{blue functions} $f:(I\times J) \cup (J\times K) \cup (I \times K) \to [t]$, i.e., those for which $g(f(i,j), f(j,k), f(k,i)) = \blue$ for all $(i,j,k)\in I\times J \times K$. Let $\cF$ be the family of all such blue $f$. If we set $\delta = \eps/50$, \cref{lem:blue} is equivalent to the inequality 
\begin{equation}\label{eq:goal}
|\cF| \le t^{(3-\delta)n^2},
\end{equation}
which is now our goal. We will split the argument into three steps. For simplicity, we often abuse notation by writing $\sum_X f(x)$ and $\prod_X f(x)$ instead of  $\sum_{x \in X} f(x)$ and $\prod_{x \in X} f(x)$.

\vspace{3mm}

\noindent {\bf 1. Coloring $(I\times J)\cup(J\times K)$ first.} Let $\cF_J$ be the family of functions $f_J: (I\times J)\cup(J\times K) \to [t]$ for which there are at least $t^{(1-2\delta)n^2}$ blue functions $f\in \cF$ extending $f_J$, i.e., for which $f|_{(I\times J)\cup(J\times K)} = f_J$. It suffices to show that 
\begin{equation}\label{eq:goal_J}
|\cF_J| \le t^{(2-2\delta)n^2},
\end{equation}
since that would imply that
\[
|\cF| \le |\cF_J| \cdot t^{n^2} + t^{2n^2} \cdot t^{(1-2\delta)n^2} \le 2 t^{(3-2\delta )n^2} \le t^{(3-\delta)n^2},
\]
by conditioning on whether or not $f|_{(I\times J)\cup(J\times K)} \in \cF_J$. That is, \eqref{eq:goal} would hold.

It will be convenient to set up some notation for counting blue extensions of $f_J$. For $f_J: (I\times J)\cup(J\times K) \to [t]$, let $f_J^*(i,k)$ be the number of choices of $c = f(i,k)$ for which $g(f_J (i,j), f_J(j,k), c) = \blue$ for all $j$. Observe that the number of extensions $P(f_J)$ of $f_J$ to blue functions $f\in \cF$ is exactly the product $P(f_J) = \prod_{I\times K}f_J^*(i,k)$. Thus, $\cF_J$ is exactly the family of $f_J$ satisfying $P(f_J) \ge t^{(1-2\delta)n^2}$, so, by the AM-GM inequality, every $f_J \in \cF_J$ satisfies
\[
S(f_J) \coloneqq \sum_{I\times K} f_J^*(i,k) \ge n^2 \left(\prod_{I\times K}f_J^*(i,k)\right)^{1/n^2} = n^2 P(f_J)^{1/n^2} \ge n^2 t^{1-2\delta}.
\]

\vspace{3mm}

\noindent {\bf 2. Coloring $(I\times \{j\})\cup(\{j\}\times K)$ one $j$ at a time.} To prove \eqref{eq:goal_J}, we reveal the values of $f_J$ one $j$ at a time and track the evolution of $P(f_J)$ and $S(f_J)$. To do so, we extend the definitions of $f^*_J$, $P(f_J)$ and $S(f_J)$ to subsets $J'\subseteq J$. Given $J'\subseteq J$ and a function $f_{J'}: (I\times J')\cup(J'\times K)\to [t]$ defined on the pairs incident to $J'$, we take $f_{J'}^*(i,k)$ to be the number of choices of $c=f(i,k)$ for which $g(f_{J'}(i,j), f_{J'}(j,k),c) = \blue$ for all $j\in J'$, $P(f_{J'}) \coloneqq \prod_{I\times K}f_{J'}^*(i,k)$ and $S(f_{J'}) = \sum_{I\times K} f^*_{J'}(i,k)$.

If $J = \{j_1,\ldots, j_n\}$, let $J_r = \{j_1,\ldots, j_r\}$, where $J_0 = \emptyset$. Observe that if $f_J: (I\times J)\cup(J\times K)\to [t]$ is restricted to $f_{J_r} \coloneqq f_J |_{(I\times J_r)\cup(J_r\times K)}$, then the values $S(f_{J_r})$ satisfy
\[
n^2 t = S(f_{J_0}) \ge S(f_{J_1}) \ge S(f_{J_2}) \ge \cdots \ge S(f_{J_n}) = S(f_J) \ge 0.
\]
Thus, for any $f_J$ we have that at least $n/2$ values of $r$ satisfy $S(f_{J_{r-1}}) - S(f_{J_r}) \le 2nt$. In words, this means that revealing the colors on the pairs incident to $j_r$ decreases the total number of blue choices between $I$ and $K$ by at most $2nt$. Call $r$ \textit{slow} (with respect to $f_J$) if $S(f_{J_{r-1}}) - S(f_{J_r}) \le 2nt$. It will suffice to show the following.

\begin{claim}\label{claim:slow}
If $P(f_{J_{r-1}}) \ge t^{(1-2\delta)n^2}$, then the number of ways to color $f_{j_r}\coloneqq f_J |_{(I\times \{j_r\})\cup(\{j_r\}\times K)}$ to make $r$ slow is at most $t^{(2-5\delta)n}$.
\end{claim}

Indeed, if the claim is true, we have
\[
|\mathcal{F}_J| \le 2^n \cdot (t^{2n})^{n/2} \cdot (t^{(2-5\delta)n})^{n/2} \le t^{(2-2\delta)n^2},
\]
since, to construct $f_J\in \mathcal{F}_{J}$, there are at most $2^n$ ways to pick which of the steps are slow and, on each of the $\ge n/2$ slow steps, there are $\le t^{(2-5\delta)n}$ ways to color $f_{j_r}$ because, throughout the process, $P(f_{J_{r-1}}) \ge P(f_J) \ge t^{(1-2\delta)n^2}$. This would prove \eqref{eq:goal_J}, as desired.

\vspace{3mm}

\noindent {\bf 3. Counting slow steps.} It remains to prove \cref{claim:slow}. Fix $r$ satisfying $P(f_{J_{r-1}}) \ge t^{(1-2\delta)n^2}$. We are interested in counting the number of ``slow colorings'' $f_{j_r}:(I\times \{j_r\})\cup(\{j_r\}\times K) \to [t]$ for which $S(f_{J_{r-1}}) - S(f_{J_r}) \le 2nt$.  
That is, we wish to count the number of choices of $f_{j_r}$ for which
\[
\sum_I \sum_K (f^*_{J_{r-1}}(i,k) - f^*_{J_{r}}(i,k)) \le 2nt.
\]
Since $\sum_K (f^*_{J_{r-1}}(i,k) - f^*_{J_{r}}(i,k))$ is nonnegative, at least $3n/4$ values of $i$ satisfy
\begin{equation}\label{eq:markov1}
\sum_K (f^*_{J_{r-1}}(i,k) - f^*_{J_{r}}(i,k)) \le 8t.
\end{equation}
Expanding out the assumption $P(f_{J_{r-1}}) \ge t^{(1-2\delta)n^2}$, we have
\[
\prod_I \prod_K f^*_{J_{r-1}}(i,k) \ge t^{(1-2\delta)n^2}.
\]
Since $f^*_{J_{r-1}}(i,k) \le t$ for all $i,k$, at least $3n/4$ values of $i$ satisfy
\begin{equation}\label{eq:markov2}
\prod_K f^*_{J_{r-1}}(i,k) \ge t^{(1-8\delta)n}.
\end{equation}
Thus, there must exist a set $I'\subseteq I$ of exactly $n/2$ values of $i$ for which \eqref{eq:markov1} and \eqref{eq:markov2} hold simultaneously. Let us now fix such an $I'$ of size $n/2$ and  count the number of $f_{j_r}$ satisfying \eqref{eq:markov1} for all $i \in I'$. Call such colorings \textit{$I'$-slow}.

Our goal is to prove that the number of $I'$-slow colorings of $f_{j_r}$ is at most $t^{(2-\epsilon/8)n}$. This will finish the proof of Claim 1 as, summing over the at most $2^n$ choices of $I'$, the number of ways to make a slow coloring $f_{j_r}$ is at most $t^{(2-\eps/10)n} = t^{(2-5\delta)n}$, as desired.

 Choosing $f_{j_r}$ is the same as choosing $f_{\{j_r\} \times K}$ and $f_{I\times \{j_r\}}$. Suppose we have already picked $f_{\{j_r\}\times K}$, the colors incident to $K$. Then the remaining pairs from $j_r$ to $I$ can be colored independently: those outside of $I'$ in $t$ ways each and those incident to $I'$ so as to satisfy \eqref{eq:markov1}. For a particular $i\in I'$, let $C_i (f_{\{j_r\}\times K})$ be the set of colors consisting of those $c = f_{j_r}(i,j_r)$ satisfying \eqref{eq:markov1}. Such a color $c$ has to be chosen so that for at most $8t$ of the $n$ values of $k$, $f^*_{J_{r}}(i,k)$ decreases.  

\begin{claim}\label{claim:jr}
    If $i$ satisfies \eqref{eq:markov2}, then the number of choices of $f_{\{j_r\}\times K}$ for which $|C_i (f_{\{j_r\}\times K})| \ge t^{1-\eps}$ is at most $2^t \cdot \binom{n}{8t} \cdot t^{(1-\eps/2)n} \le t^{(1-\eps/4)n}$.
\end{claim}

\begin{proof}
    Fix a choice of $X = C_i (f_{\{j_r\}\times K})$ in at most $2^t$ ways 
     and a choice of the at most $8t$ values of $k$ for which $f^*_{J_{r}}(i,k)$ decreases in at most $\binom{n}{8t}$ ways. Once these choices are fixed, the color choices for $j_r\times k$ can be made independently for each $k$, so long as, for those $k$ outside of the special $8t$, we do not eliminate any possible colors across $(i,k)$. Let $Y_k$ denote the set of valid color choices for $j_r\times k$, so that $|Y_k| = t$ for those in the special set of $8t$.
    
    By \eqref{eq:markov2}, we see that, for at least $3n/4$ values of $k$, we have a set $Z_k\subseteq [t]$ of size $f^*_{J_{r-1}}(i,k) \ge t^{1-\eps}$ of possible colors for $ik$. For all except $8t$ of these, none of these colors can be eliminated by choosing any of the possible colors $ij$ from $X$ and so, for each of at least $3n/4 - 8t \ge n/2$ choices of $k$, we have that there is no triple $(x,y,z) \in X\times Y_k \times Z_k$ for which $g(x,y,z)=\red$. Since $X$ and $Z_k$ are both at least $t^{1-\eps}$, we find that $Y_k$ is less than $t^{1-\eps}$. Thus, the number of choices of $f_{\{j_r\}\times K}$ for this particular choice of $X$ and the exceptional $8t$ values of $k$ is at most $(t^{1-\eps})^{n/2} \cdot t^{n/2}$, so the claim follows.
\end{proof}

This claim essentially finishes the proof. We have
\[
|\{I'\textnormal{-slow colorings }f_{j_r}\}| = \sum_{f_{\{j_r\}\times K}}\prod_i|C_i (f_{\{j_r\}\times K})|,
\]
a sum that has $t^n$ total terms, where each summand is a product of factors which lie in $\{0,1,\ldots, t\}$. We can partition the products above based on whether $i \in I'$, so that, using the trivial bound $t$ for $|C_i (f_{\{j_r\}\times K})|$ when $i \not \in I'$, we obtain
$$\sum_{f_{\{j_r\}\times K}}\prod_i|C_i (f_{\{j_r\}\times K})|
\le \sum_{f_{\{j_r\}\times K}}\left(\prod_{i\in I'}|C_i (f_{\{j_r\}\times K})|\right)t^{n-|I'|}.
$$
 By \cref{claim:jr} and the definition of $I'$, we have that if $i\in I'$, then the corresponding factor $|C_i(f_{\{j_r\}\times K})|$ is at least $t^{1-\eps}$ for at most $t^{(1-\eps/4)n}$ choices of $f_{\{j_r\}\times K}$. This allows us to partition the sum as 
$$\sum_{f_{\{j_r\}\times K}}\prod_{i\in I'}|C_i (f_{\{j_r\}\times K})| =
\sum_{|C_i(f_{\{j_r\}\times K})|\ge t^{1-\eps}}\prod_{i \in I'}|C_i (f_{\{j_r\}\times K})| +
\sum_{|C_i(f_{\{j_r\}\times K})|< t^{1-\eps}}\prod_{i \in I'}|C_i (f_{\{j_r\}\times K})|.$$
The first sum can be bounded by $t^{(1-\eps/4)n} \cdot t^{|I'|}$ and the second sum can be bounded by $t^n \cdot (t^{(1-\eps)})^{|I'|}$. Multiplying by $t^{n-|I'|}$ and recalling that $|I'|=n/2$, this yields
\[
|\{I'\textnormal{-slow colorings }f_{j_r}\}| \le t^{n-|I'|}( t^{(1-\eps/4)n} \cdot t^{|I'|}+t^n \cdot (t^{(1-\eps)})^{|I'|})
= t^{(2-\eps/4)n} + t^{(2-\eps/2)n} <  t^{(2-\eps/8)n},
\]
so the proof is complete.
\end{proof}

\subsection{Avoidability proofs}\label{sec:mpaircomputation}

In this section, we calculate the pair densities of tight cycles and links of cycles, as a result showing that non-tripartite tight cycles and links of non-bipartite graphs are avoidable. Write $C_n^{(3)}$ for the 3-uniform tight cycle of length $n$. Concretely, we can take $V(C_n^{(3)}) = \mathbb Z_n$ and $E(C_n^{(3)})=\{\{k, k+1, k+2\}: k \in \mathbb Z_n\}$.

\begin{prop}\label{tightcycleprop}
    \[\mpair(C_n^{(3)})=\begin{cases} 1/2 & 3\nmid n, \\ 1/3 & 3|n.
    \end{cases}\]
\end{prop}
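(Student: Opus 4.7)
I would split into the cases $3 \mid n$ and $3 \nmid n$, in each case establishing matching upper and lower bounds. The $3 \mid n$ case reduces to a direct construction; the $3 \nmid n$ case has a straightforward upper bound and an intricate lower bound.

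\textbf{Case $3 \mid n$.} To prove $\mpair(C_n^{(3)}) = 1/3$, since $\mpair \ge 1/3$ always, it suffices to show $C_n^{(3)}$ is $123$-inducible. Partition $V(C_n^{(3)}) = \mathbb{Z}_n$ by residue mod $3$ into classes $A_0, A_1, A_2$ and order so that $A_0 < A_1 < A_2$ (arbitrarily within classes). Label the pair $\{x,y\}$ with $x \in A_i, y \in A_j$ (say $i < j$) by $1$ if $(i,j)=(0,1)$, by $2$ if $(1,2)$, by $3$ if $(0,2)$. Because $3 \mid n$, every edge $\{k,k+1,k+2\}$ has one vertex in each class, and a direct check confirms the three pairs receive labels $1,2,3$ in the $\sigma$-order $\{u,v\}, \{v,w\}, \{u,w\}$, giving the required $123$-inducibility.

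\textbf{Case $3 \nmid n$, upper bound.} In the natural cyclic ordering of $\mathbb{Z}_n$ (with $n \geq 5$), introduce $2n$ distinct formal vertices $\{a_k, b_k\}_{k \in \mathbb{Z}_n}$ and define $f$ by $f(\{k, k+1\}) = a_k$ and $f(\{k, k+2\}) = b_k$. The image oriented $3$-graph has edges $\{a_k, a_{k+1}, b_k\}$; the wrap-around cases at $k = n-2, n-1$ yield edges involving $a_0$ and $b_{n-1}, b_{n-2}$, but all $n$ image edges are distinct $3$-sets. Hence $v = 2n$, $e = n$, so $e/v = 1/2$ for the whole image. An $s$-edge subhypergraph spans at least $2s + 1$ vertices (a routine Berge-tree-structure count), so any proper subhypergraph has $e/v < 1/2$, and the max is exactly $1/2$.

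\textbf{Case $3 \nmid n$, lower bound.} By \cref{lem:avoidmpair}, I must show that every pair homomorphism image of $C_n^{(3)}$ contains a Berge cycle. Suppose for contradiction some $f$ yields a Berge-acyclic image $G'$, so $v(G') \ge 2e(G')+1$. Writing $\pi(e_k) = \{a_k, a_{k+1}, b_k\}$ as above, we have $v(G') \le 2n$ and $e(G') \le n$, so $e(G') \le n-1$ and some coincidence $\pi(e_k) = \pi(e_l)$ occurs. If all $n$ image edges coincide to one oriented triple, then $|V(G')| = 3$, $|E(G')| = 1$, and $f$ exhibits $123$-inducibility --- forcing $3 \mid n$, contradiction. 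Otherwise, I track the ``slot'' $r_k \in \{1,2,3\}$ of $a_k$ in the ordered triple $\pi(e_k)$, and analyze the shift $r_{k+1} - r_k \pmod 3$ across consecutive $3$-edges. A case analysis on whether $\pi(e_k) = \pi(e_{k+1})$ (in which case the oriented structure of the shared triple determines the shift) or $\pi(e_k) \ne \pi(e_{k+1})$ (where Berge-acyclicity forces them to share exactly the vertex $a_{k+1}$) shows that each shift equals $+1 \pmod 3$. Summing around the cycle yields $0 \equiv n \pmod 3$, contradicting $3 \nmid n$.

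\textbf{Main obstacle.} The technical heart is the $\mathbb{Z}_3$ position-cocycle argument in the lower bound: showing that the consecutive-slot shift $r_{k+1} - r_k$ is uniformly $+1 \pmod 3$ with a consistent sign. This requires carefully combining the oriented $3$-graph condition (each image edge carries a cyclic order of its three vertices) with Berge-acyclicity (which forces distinct image edges to overlap in at most one vertex), and doing the analysis uniformly over all possible linear orderings $\sigma$ of $V(C_n^{(3)})$ --- since $\sigma$ determines the positions of the pairs $\{u,v\}, \{v,w\}, \{u,w\}$ and hence the slot assignments in each oriented triple.
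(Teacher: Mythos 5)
Your treatment of the $3\mid n$ case and of the upper bound $\mpair(C_n^{(3)})\le 1/2$ is correct and matches the paper (the injective pair homomorphism plus the observation $e(H')\le|\partial H'|/2$). The gap is in the lower bound for $3\nmid n$, specifically the claim that the slot shift $r_{k+1}-r_k$ is uniformly $+1\pmod 3$.

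That claim is false. Take $r_k$ to be the position of $a_k=f(\{k,k+1\})$ in the ordered triple $\pi(e_k)=(f(uv),f(vw),f(wu))$, where $u<v<w$ is the ordering of $\{k,k+1,k+2\}$. If the ambient order happens to satisfy $k<k+1<k+2$ and $k+1<k+2<k+3$, then $\pi(e_k)=(a_k,a_{k+1},b_k)$ and $\pi(e_{k+1})=(a_{k+1},a_{k+2},b_{k+1})$, so $r_k=r_{k+1}=1$ and the shift is $0$. If instead $k<k+2<k+1$ and $k+3<k+2<k+1$ (a valid configuration in which one can even arrange $\pi(e_k)=\pi(e_{k+1})$), then $\pi(e_k)=(b_k,a_{k+1},a_k)$, $\pi(e_{k+1})=(a_{k+2},a_{k+1},b_{k+1})$, giving $r_k=3$, $r_{k+1}=2$, a shift of $-1$. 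So the shift depends on the unknown relative order of $\{k,k+1,k+2,k+3\}$, and a sign-free position cocycle cannot telescope to $n\pmod 3$. Moreover, when $\pi(e_k)\ne\pi(e_{k+1})$, the two slots are positions of $a_{k+1}$ in \emph{different} oriented edges of $G$, and nothing in the oriented-$3$-graph or Berge-acyclicity structure constrains how the position of a shared vertex varies between edges; so the step relating $r_k$ and $r_{k+1}$ has no leverage at all in that subcase.

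The paper repairs exactly this issue by augmenting the bookkeeping with a sign. Each directed pair $v_iv_{i+1}$ is mapped to a vertex in $[k]\times\{+,-\}$, with the sign recording whether $v_i<v_{i+1}$ or not, yielding a closed walk $W$ of length $n$ on the doubled vertex set. \cref{walktriangleslemma} then shows (by using Berge-acyclicity and the fact that the shadow of a Berge-acyclic $3$-graph has only $3$-cycles, via \cref{treelemma}) that $W$ contains no cycle of length $\ge 4$ and no edge traversed in both directions. Finally, an induction that peels off triangles one at a time from any such closed walk shows its length is divisible by $3$. Your plan would need an analogous sign-tracking mechanism (or an argument that the signs cancel appropriately) before the summation-around-the-cycle step can be made to work; as written, the ``$+1$ shift'' is not a theorem.
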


\begin{proof}
    As  any subgraph $H$ of $C_n^{(3)}$ satisfies $e(H) \le |\partial H|/2$, we can see that $\mpair(C_n^{(3)})\leq 1/2$ by mapping each element of $\partial C_n^{(3)}$ to a distinct vertex.

    If $3|n$, it is not difficult to see that $C_n^{(3)}$ is $123$-inducible and thus has $\mpair=1/3$. It therefore  suffices to show that if $\mpair(C_n^{(3)})<1/2$, then $3|n$.
    
    Take $n$ with $\mpair(C_n^{(3)})<1/2$.  By definition,  we can take an ordering of $V(C_n^{(3)})$, a nonnegative integer $k$ and a surjection $f:\partial C_n^{(3)}\to [k]$ such that $|f(E(C_n^{(3)}))|<k/2$. Consider the $3$-graph $G$ on $[k]$ with edges given by $f(e)$, $e\in E(C_n^{(3)})$. 
    Then $v(G)=k$ and $e(G)<k/2$. Note that $G$ is connected, as in a tight cycle we can go from any edge to any other by taking a sequence of edges any adjacent pair of which share an element of the shadow.

    \begin{lemma}\label{treelemma}
        For any connected $3$-graph $G$,
        \[v(G)\leq 2e(G)+1,\]
        with equality if and only if $G$ is Berge-acyclic. Moreover, in the case of equality, the shadow of $G$ has no cycle of length at least four.
    \end{lemma}
    
    \begin{proof}
        Note that if $c(G)$ is the number of connected components in $G$, then $2e(G)+c(G)-v(G)$ cannot decrease upon adding an edge $e$ to $G$, as easily seen by a short case analysis on the number of new vertices in $e$. For the empty $3$-graph, this quantity is $0$, so, for a connected $3$-graph, $2e(G)+1-v(G)=2e(G)+c(G)-v(G)\geq 0$, as desired. Moreover, if $G'$ is a Berge cycle within $G$, then $2e(G') + c(G') - v(G') \ge 1$, so, again adding edges one at a time, we may conclude that $2e(G)+1-v(G)=2e(G)+c(G)-v(G)\geq 1$. 

        Conversely, suppose that $G$ is Berge-acyclic. If every edge of $G$ has at least $2$ vertices each contained in another edge, then we can create a walk of the form $e_1v_1e_2v_2\cdots$, where $v_i\in e_i, e_{i+1}$ and $e_i\neq e_{i+1}$, $v_i\neq v_{i+1}$. 
        If we get to a vertex or an edge that we have seen before, we obtain a Berge cycle, a contradiction. Thus, $G$ must contain at least one edge $e$ that contains at most one vertex that is contained in any other edge. Removing $e$ decreases $e(G)$ by one and $v(G)$ by $2$, but preserves Berge-acyclicity and connectedness, so a simple induction shows that $v(G)=2e(G)+1$.

        Finally, suppose that $G$ is Berge-acyclic and assume, for the sake of contradiction, that  $\partial G$ has a cycle of length at least $4$. Let $e_1,\ldots,e_n$ be the edges in a shortest such cycle of $\partial G$. If $n\geq 5$, $e_i$ and $e_j$ cannot be in the same edge  $e \in E(G)$ for any $i,j$, as if they were then you could replace them with the third edge of $\partial G$ within $e$ and make a cycle of length $n-1$. Thus, $e_1,\ldots,e_n$ are all in different edges of $G$ and form a Berge cycle, a contradiction.
        If $n=4$, either $e_1,\ldots,e_4$ are in different edges of $G$ and form a Berge $4$-cycle or two of them are in the same edge  $e' \in E(G)$, say $e_1$ and $e_2$. Replacing $e_1$ and $e_2$ with the third element of $\partial G$ within  $e'$, we either obtain a Berge $3$-cycle or the remaining three edges of $\partial G$ are all contained in one edge of $G$. In this latter case, however, we have two edges of $G$ spanning only $4$ vertices, yielding a Berge $2$-cycle, a contradiction. 
    \end{proof}
    
    Applying the lemma, since $v(G)>2e(G)$, we must have that $G$ is Berge-acyclic and that the shadow of $G$ only has cycles of length $3$.

    Consider now the closed walk $W$ of length $n$ on the complete graph with $2k$ vertices $[k]\times\{+,-\}$ defined as follows. Label the vertices of $C_n^{(3)}$ as $v_1,\ldots,v_n$ cyclically. If the edge $v_iv_{i+1}$ has color $j$ (i.e., it is mapped to $j$ under $f$), then we map it to $(j,+)$ if $v_i<v_{i+1}$ in our ordering on $V(C_n^{(3)})$ and $(j,-)$ otherwise. The moves in the walk $W$ go from the image of $v_iv_{i+1}$ to the image of $v_{i+1}v_{i+2}$ (with indices considered cyclically).

    \begin{lemma}\label{walktriangleslemma}
        The walk $W$ has no cycles of length more than $3$ and no edges that are traversed in both directions.
    \end{lemma}
    
    \begin{proof}
        Consider the map $p$ from $[k]\times\{+,-\}\to [k]$ that ignores the sign. Edges in $W$ are sent to edges in the shadow of $G$, so all cycles in the image of $W$ must have length $3$.

        Suppose now, for the sake of contradiction, that $W$ has a cycle $C$ of length at least $4$. The image $p(C)$ is a closed walk in the shadow of $G$. The minimum closed subwalk of $p(C)$ coming from a subpath of $C$ must have length $2$ or $3$, as the shadow of $G$ only has cycles of length $3$. 
        Since $C$ has length greater than $3$, this must come from a subpath of $C$ of the form
        \[(j_1,s_1),(j_2,s_2),(j_1,-s_1)\]
        or
        \[(j_1,s_1),(j_2,s_2),(j_3,s_3),(j_1,-s_1)\]
        for some $j_i\in [k]$, $s_i\in \{+,-\}$.
        We now show that both of these cases give contradictory conclusions about the vertex ordering on $v_1,\ldots,v_n$.

        \vspace{3mm}
        \noindent
        {\it Case 1:} In the first case, there are two edges $v_iv_{i+1}v_{i+2}$ and $v_jv_{j+1}v_{j+2}$ of $C_n^{(3)}$ such that $f(v_iv_{i+1})=f(v_{j+1}v_{j+2})=j_1$ and $f(v_{i+1}v_{i+2})=f(v_jv_{j+1})=j_2$. Thus, in $G$, the images of $v_iv_{i+1}v_{i+2}$ and $v_jv_{j+1}v_{j+2}$ share two vertices and so must be the same edge by Lemma \ref{treelemma}. This implies that $f(v_iv_{i+2})=f(v_jv_{j+2})=j_3$ for some color $j_3$. 
        Since $f$ cannot send two edges to two different permutations of the same triple, we see that the relative orderings of $v_iv_{i+1}v_{i+2}$ and $v_{j+2}v_{j+1}v_j$ must be the same. However, since the sign $s_2$ of $v_{i+1}v_{i+2}$ is the same as the sign of $v_{j}v_{j+1}$, this cannot be the case.

        \vspace{3mm}
        \noindent
        {\it Case 2:} In the second case, consider the edges $v_iv_{i+1}v_{i+2}$, $v_jv_{j+1}v_{j+2}$ and $v_kv_{k+1}v_{k+2}$ that form the three edges of our cycle in $p(C)$. Then we have that \[f(v_iv_{i+1})=f(v_{k+1}v_{k+2})=j_1,\]
        \[f(v_jv_{j+1})=f(v_{i+1}v_{i+2})=j_2\]
        and
        \[f(v_kv_{k+1})=f(v_{j+1}v_{j+2})=j_3.\]
        Thus, each pair of the edges $f(v_iv_{i+1}v_{i+2})$, $f(v_iv_{i+1}v_{i+2})$ and $f(v_iv_{i+1}v_{i+2})$ in $G$ must share a vertex. No two different edges can share two vertices in $G$, so, since this cannot form a loose $3$-cycle, it must be that these three edges of $G$ are the same. Thus, $f(v_jv_{j+2})=j_1$, $f(v_kv_{k+2})=j_2$ and $f(v_iv_{i+2})=j_3$. Once again, since $f$ cannot send different edges to different permutations of the same triple, this gives that the relative orderings of $v_iv_{i+1}v_{i+2}$, $v_{j+2}v_jv_{j+1}$ and $v_{k+1}v_{k+2}v_k$ are identical. However, since $v_iv_{i+1}$ maps to $(j_1,s_1)$ but $v_{k+1}v_{k+2}$ maps to $(j_1,-s_1)$, we see that $v_iv_{i+1}$ and $v_{k+1}v_{k+2}$  have different orderings, contradicting that $v_iv_{i+1}v_{i+2}$ and $v_{k+1}v_{k+2}v_k$ have the same relative ordering. This concludes the proof that $W$ has no cycles of length more than $3$.

        \vspace{2mm}
        Finally, we show that no edge of $W$ is traversed in both directions. Suppose, for the sake of contradiction, that there is an edge $(j_1,s_1),(j_2,s_2)$ that is traversed in both directions. Suppose that in the forward direction this edge comes from $v_iv_{i+1}v_{i+2}$ and in the reverse direction it comes from $v_jv_{j+1}v_{j+2}$. Then $f(v_iv_{i+1})=f(v_{j+1}v_{j+2})=j_1$ and $f(v_jv_{j+1})=f(v_{i+1}v_{i+2})=j_2$. As in the arguments above, we must have that $f(v_iv_{i+2})=f(v_jv_{j+2})$ and $v_iv_{i+1}v_{i+2}$ has the same relative ordering as $v_{j+2}v_{j+1}v_j$. However, this contradicts the fact that $v_iv_{i+1}$ and $v_{j+1}v_{j+2}$ have the same relative ordering (given by $s_1$). Thus, no edge of $W$ is traversed in both directions, finishing the proof of Lemma \ref{walktriangleslemma}.
    \end{proof}
    
    It is an easy induction to show that any closed walk satisfying the conditions of Lemma \ref{walktriangleslemma} has length divisible by $3$ (it must have at least one triangle and removing it gives a closed walk of length $n-3$ also satisfying the conditions). Since $W$ has length $n$, we must have $3|n$, finishing the proof of Proposition \ref{tightcycleprop}.
\end{proof}

We now prove a similar statement for links of cycles. Write $L_n^{(3)}$ for the 3-graph that is the link of the cycle $C_n$. Concretely, $V(L_n^{(3)}) = \mathbb Z_n \cup \{x\}$ and $E(L_n^{(3)})=\{\{x,k,k+1\}: k \in \mathbb Z_n\}$.

\begin{prop}\label{linkcycleprop}
    \[\mpair(L_n^{(3)})=\begin{cases} 1/2 & 2\nmid n, \\ 1/3 & 2|n.
    \end{cases}\]
\end{prop}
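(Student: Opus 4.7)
The proof plan mirrors the structure of \cref{tightcycleprop}. The upper bound $\mpair(L_n^{(3)}) \le 1/2$ holds for every $n$ by sending each of the $2n$ pairs in $\partial L_n^{(3)}$ to a distinct vertex, realising $e(f(L_n^{(3)}))/v(f(L_n^{(3)})) = n/(2n) = 1/2$ for the whole hypergraph and, easily, for every subgraph. When $n$ is even, I would establish $123$-inducibility using the bipartition of $C_n$: order the vertices so that the even-indexed elements of $\mathbb{Z}_n$ come before $x$ and the odd-indexed elements come after, so that in each edge $\{x,k,k+1\}$ the vertex $x$ is the middle element. Labelling every spoke $\{x,j\}$ by $1$ if $j$ is even and by $2$ if $j$ is odd, and every base pair $\{k,k+1\}$ by $3$, then satisfies the $123$-induction conditions and gives $\mpair(L_n^{(3)})=1/3$.

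For odd $n$, I would prove the matching lower bound $\mpair(L_n^{(3)}) \ge 1/2$ by contradiction. A putative witness to $\mpair(L_n^{(3)}) < 1/2$ is a pair homomorphism $f:\partial L_n^{(3)}\to [k]$ producing an oriented $3$-graph $G$ on $[k]$ with $e(G)<k/2$. Since any two consecutive edges $\{x,k,k+1\}$, $\{x,k+1,k+2\}$ of $L_n^{(3)}$ share the pair $\{x,k+1\}$, $G$ is connected, so \cref{treelemma} forces $G$ to be Berge-acyclic with its shadow containing only cycles of length $3$. Following the tight-cycle template, I build the closed walk $W$ of length $n$ on $[k]\times\{+,-\}$: the $k$-th vertex is $(f(\{x,k\}),s_k)$, where $s_k=+$ if $x$ precedes $k$ in the ordering and $s_k=-$ otherwise, and the $k$-th step corresponds to the edge $\{x,k,k+1\}$ of $L_n^{(3)}$.

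The central claim, an analogue of \cref{walktriangleslemma}, is that $W$ admits \emph{no} closed subwalk of length $3$ and \emph{no} length-$2$ subwalk of the form $(j_1,s),(j_2,s'),(j_1,-s)$; only back-and-forth length-$2$ subwalks $(j_1,s),(j_2,s'),(j_1,s)$ over a single edge of $W$ are allowed. Granted this, the induction at the end of the proof of \cref{tightcycleprop} carries over verbatim with ``triangle'' replaced by ``back-and-forth'': any such closed walk of positive length must contain a back-and-forth, whose removal leaves a closed walk of length $n-2$ satisfying the same properties. Iterating forces $2\mid n$, contradicting that $n$ is odd.

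The principal obstacle is proving the walk claim. A shortest violating subpath has length $2$ or $3$ in $W$ and projects to a closed walk in the shadow of $G$; in each case, Berge-acyclicity together with the shadow restriction force the two or three edges of $L_n^{(3)}$ involved to map to a single common edge $\{j_1,j_2,j_3\}$ of $G$, and the orientation of $G$ then requires the two or three ordered triples produced by $f$ to coincide as tuples. The crucial new feature compared with the tight-cycle argument is that every edge of $L_n^{(3)}$ contains the distinguished vertex $x$, so the position of $x$ in the ordering controls exactly where $f(\{x,k+i\})$ and $f(\{k+i,k+i+1\})$ sit inside each oriented triple. A short case analysis over the relative positions of $k,k+1,k+2$ (and, for the length-$3$ case, $k+3$) with respect to $x$ shows that no ordering can simultaneously equate all of the required ordered triples; in particular, a true $3$-cycle in $W$ is impossible here even though it is allowed in the tight-cycle setting. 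This asymmetry introduced by $x$ is exactly what replaces divisibility by $3$ in \cref{tightcycleprop} with divisibility by $2$ here.
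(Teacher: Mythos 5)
Your upper bound, your handling of the even case via $123$-inducibility (equivalent to the paper's observation that $L_n^{(3)}$ is tripartite when $n$ is even), and your construction of the walk $W$ all match the paper. Your key claims — that $W$ has no closed subwalk of length $3$ and no length-$2$ subwalk $(j_1,s),(j_2,s'),(j_1,-s)$ — are also correct, and the case analysis you sketch to prove them is essentially the content of Cases 1 and 2 in the paper's proof of \cref{lem:linkbipartite}, restated in terms of switching counts.

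The gap is in the concluding induction: the assertion that any closed walk satisfying these two properties "must contain a back-and-forth" is false. In the tight-cycle setting, the triangle-removal induction works because \cref{walktriangleslemma} also rules out cycles of length greater than $3$ in $W$; consequently the first time $W$ revisits a vertex, the revisit is exactly three steps later. In the link setting there is no analogous restriction on longer cycles: the shadow of $G$ has only triangles, but $W$ lives on the doubled vertex set $[k]\times\{+,-\}$, so its cycles can be longer. Concretely, a walk traversing a single shadow triangle twice — once at signs $(a,+),(b,+),(c,+)$ landing on $(a,-)$, then again from $(a,-)$ around $(b,-),(c,-)$ back to $(a,+)$ — is a $6$-cycle in $W$ with no back-and-forth and no length-$3$ closed subwalk, so your removal procedure stalls immediately. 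Nor is your claim that "a shortest violating subpath has length $2$ or $3$ in $W$" justified; that bound only controls cycles in the projection $p(W)$, not in $W$ itself.

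The paper sidesteps this by proving $W$ is bipartite, using a switching-parity argument: take any cycle $C$ in $W$ (which must have an even number of switching edges, since each switch flips the sign), decompose the projection $p(C)$ into closed subwalks of lengths $2$ and $3$ (possible because the shadow has only triangles), and show that each length-$2$ piece contributes an even number of switches while each length-$3$ piece contributes an odd number. This forces the number of length-$3$ pieces, and hence the length of $C$, to be even. Your Cases 1 and 2 supply exactly the needed facts about the pieces; the missing ingredient is this parity bookkeeping in place of the back-and-forth removal.
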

\begin{proof}
    The proof at first proceeds along similar lines to the previous one, but there are differences in the details. First, since any subgraph $H$ of $L_n^{(3)}$ satisfies $e(H) \le |\partial H|/2$, we must have $\mpair(L_n^{(3)})\leq 1/2$ for all $n$. If $2|n$, it is easy to see that $L_n^{(3)}$ is tripartite and thus  $\mpair(L_n^{(3)})=1/3$. It therefore remains to show that $\mpair(L_n^{(3)})\geq 1/2$ for all odd $n$.

    Suppose, for the sake of contradiction, that we have $n$ with $\mpair(L_n^{(3)})<1/2$. We must show that $n$ is even. We may take an ordering of $V(L_n^{(3)})$, a nonnegative integer $k$ and a  surjection $h:\partial L_n^{(3)}\to [k]$ such that $|h(E(L_n^{(3)}))|<k/2$. As before, let $G$ be the 3-graph with edges given by $h(e), e\in E(L_n^{(3)})$. Since $G$ is a connected $3$-graph with $v(G)=k$ and $e(G)<k/2$, \cref{treelemma} implies that $G$ is Berge-acyclic.

    Consider now the closed walk $W$ of length $n$ on the complete graph with $2k$ vertices $[k]\times\{+,-\}$ defined as follows. Label the center of $L_n^{(3)}$ as $v$ and the remaining vertices cyclically as $v_1,\ldots,v_n$. If  $vv_i \in \partial L_n^{(3)}$ has color $j$, map it to $(j,+)$ if $v<v_i$ in our ordering on $V(L_n^{(3)})$ and $(j,-)$ otherwise. (Note that this is different from how our mapping uses the ordering in the previous proof!) The moves in the walk $W$ go from the image of $vv_i$ to the image of $vv_{i+1}$ (with indices considered cyclically).

    The equivalent of \cref{walktriangleslemma} in this case is the following.
    \begin{lemma}\label{lem:linkbipartite}
        The walk $W$ is bipartite.
    \end{lemma}

    \begin{proof}
     Suppose, for the sake of contradiction, that $W$ has an odd cycle $C$. Consider the map $p:[k]\times\{+,-\}\to [k]$ that ignores the sign. Edges in $C$ are sent to edges in the shadow of $G$, so all cycles in $p(C)$ must have length $3$. Thus, in the graph $p(C)$, all minimal closed subwalks have length $2$ or $3$. So we may partition the edges of $p(C)$ (which possibly occur with repetition) into closed subwalks of lengths $2$ or $3$.

        Call an edge of $C$ (and the corresponding edge of $p(C)$) \emph{switching} if it goes from $+$ to $-$ or $-$ to $+$. Since $C$ is a cycle, it contains an even number of switching edges. As $p(C)$ has odd length, one of the closed subwalks in our partition must have a number of switching edges of different parity to its length. Thus, $p(C)$ contains one of the following:
        \begin{enumerate}
            \item A closed walk of length $2$ using exactly $1$ switching edge.
            \item A closed walk of length $3$ using $0$ or $2$ switching edges.
        \end{enumerate}
        We now show that both of these are impossible.
        
        \vspace{3mm}
        \noindent
        {\it Case 1:} Suppose we have a closed walk of length $2$ using exactly $1$ switching edge. We may assume the walk goes $a\to b\to a$ for some $a,b\in [k]$.
        This implies that there are indices $i,j$ such that $vv_i$ and $vv_{j+1}$ are colored $a$, $vv_j$ and $vv_{i+1}$ are colored $b$. 
        Now $v_iv_{i+1}$ and $v_jv_{j+1}$ must be colored the same color, call it $c$, or else $G$ would have two distinct $3$-edges sharing two vertices. Thus, $vv_iv_{i+1}$ and $vv_jv_{j+1}$ must correspond to the same edge in $G$. Comparing which $2$-edges are colored $a,b,c$, we see that this implies that $vv_iv_{i+1}$ and $vv_{j+1}v_j$ have the same relative ordering. 
        But now note that the edge in our walk corresponding to $vv_iv_{i+1}$ is switching if and only if $v$ lies between $v_i$ and $v_{i+1}$. Thus, the edge corresponding to $vv_iv_{i+1}$ is switching if and only if the edge corresponding to $vv_jv_{j+1}$ is, meaning it is impossible for exactly one such edge to be switching.

        \vspace{3mm}
        \noindent
        {\it Case 2:} Suppose we have a closed walk of length $3$ using $0$ or $2$ switching edges. We may assume the walk goes $a\to b\to c\to a$ for some $a,b,c\in [k]$. This implies that there are indices $h,i,j$ such that 
        \begin{itemize}
            \item $vv_h$ and $vv_{j+1}$ are colored $a$,
            \item $vv_i$ and $vv_{h+1}$ are colored $b$,
            \item $vv_j$ and $vv_{i+1}$ are colored $c$.
        \end{itemize}
        As this cannot yield a Berge $3$-cycle in $G$ or two $3$-edges sharing two vertices, we must have that $v_hv_{h+1}$ is colored $c$, $v_iv_{i+1}$ is colored $a$ and $v_jv_{j+1}$ is colored $b$. As in the previous case, this implies that $vv_hv_{h+1}$, $v_iv_{i+1}v$ and $v_{j+1}vv_j$ have the same relative ordering. Once again, the edge of our walk corresponding to $vv_iv_{i+1}$ is switching if and only if $v$ lies between $v_i$ and $v_{i+1}$ in the ordering (and similarly for $j$ and $k$), so exactly one of our three edges must be switching (depending on which is the middle element in the ordering). Thus, it is impossible for $0$ or $2$ switching edges to appear, finishing the proof.
    \end{proof}
    Now \cref{linkcycleprop} immediately follows, as \cref{lem:linkbipartite} implies that $W$ is a closed walk in a bipartite graph and thus its length $n$ must be even.
\end{proof}

\subsection{Proof of \cref{thm:nlognsupersaturation}} \label{sec:supersat}

In this section, we prove \cref{thm:nlognsupersaturation}, which says that \cref{thm:mainmpair} is tight for a large class of $3$-graphs, namely, any $3$-graph that can be created by starting with the empty graph and iteratively adding edges that strictly increase the number of edges in the shadow of $H$ at each step. 
This theorem will follow immediately from the following two lemmas, both of which are proved using supersaturation techniques.

\begin{lemma}\label{lem:K1nn}
    For all $3$ graphs $H$, $r(H,K_{n,n,n}^{(3)})\leq r(H,K_{1,n,n}^{(3)})^{3n}$.
\end{lemma}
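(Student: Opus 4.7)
The plan is to prove the lemma by a double counting / averaging argument, showing that many blue copies of $K_{1,n,n}^{(3)}$ must share their two $n$-sets. Set $m := r(H,K_{1,n,n}^{(3)})$ and consider an arbitrary red/blue coloring $\chi$ of $K_N^{(3)}$ with $N=m^{3n}$ that contains no red copy of $H$. The goal is to produce a blue $K_{n,n,n}^{(3)}$.

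First I would lower-bound the number of blue copies of $K_{1,n,n}^{(3)}$. Let $\mathcal{K}$ be the set of such copies, each identified by a triple $(v,\{X,Y\})$ with apex $v$ and two disjoint $n$-sets $X,Y$ (unordered). By definition of $m$, the restriction of $\chi$ to any $m$-subset of $V$ has no red $H$ and therefore contains a blue $K_{1,n,n}^{(3)}$. Double counting the pairs $(S,K)\in\binom{V}{m}\times\mathcal{K}$ with all $2n+1$ vertices of $K$ inside $S$ yields
\[
|\mathcal{K}| \;\ge\; \frac{\binom{N}{m}}{\binom{N-2n-1}{m-2n-1}} \;=\; \frac{\binom{N}{2n+1}}{\binom{m}{2n+1}}.
\]

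Next I would average the number of apexes $c(X,Y)$ over the $T=\tfrac{1}{2}\binom{N}{n}\binom{N-n}{n}$ unordered pairs $\{X,Y\}$ of disjoint $n$-subsets, noting $\sum c(X,Y)=|\mathcal{K}|$. Using the identities $\binom{N}{n}\binom{N-n}{n}=\binom{N}{2n}\binom{2n}{n}$ and $\binom{N}{2n+1}=\binom{N}{2n}(N-2n)/(2n+1)$, the average simplifies to
\[
\frac{|\mathcal{K}|}{T} \;\ge\; \frac{2(N-2n)}{(2n+1)\binom{2n}{n}\binom{m}{2n+1}}.
\]
Plugging in $N=m^{3n}$ and estimating $\binom{m}{2n+1}\le m^{2n+1}/(2n+1)!$, this ratio exceeds $n$ with enormous slack (the condition reduces to $m^{n-1}\gtrsim n\cdot 4^n/(2n)!$, which trivially holds since $m\ge 2n+1$, as witnessed by the all-blue coloring of $K_{2n}^{(3)}$). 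Pigeonhole then produces a pair $\{X,Y\}$ with at least $n$ apexes; taking any $n$ of these to be a set $A$, the three pairwise disjoint $n$-sets $A,X,Y$ satisfy $\chi(a,x,y)=\text{blue}$ for every $(a,x,y)\in A\times X\times Y$, giving the desired blue $K_{n,n,n}^{(3)}$.

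Neither step is delicate: the double counting inequality is a standard supersaturation estimate, and the averaging verification has so much slack at $N=m^{3n}$ that the exponent $3n$ is merely a convenient round number (one could in fact push it down to roughly $2n+1$ with the same argument). Thus there is no real obstacle, and the proof is short.
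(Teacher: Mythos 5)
Your proof is correct and follows essentially the same approach as the paper's: lower-bound the number of blue $K_{1,n,n}^{(3)}$ copies by observing that every $r(H,K_{1,n,n}^{(3)})$-subset contains one, then pigeonhole over unordered pairs of disjoint $n$-sets to find a pair with at least $n$ shared apexes, yielding a blue $K_{n,n,n}^{(3)}$. The paper phrases the first step probabilistically (as the expected number of blue copies in a random induced subgraph) rather than via explicit double counting, but the underlying calculation is identical.
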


\begin{lemma}\label{lem:1skeleton}
    If $H$ is a $3$-graph such that $r(H,K_{1,n,n}^{(3)})=n^{O(1)}$ and $x,y,z$ are vertices such that at least one of $xy$, $yz$ and $zx$ is not in the shadow of $H$, then
    \[r(H\cup\{xyz\}, K_{1,n,n}^{(3)}) = n^{O(1)}.\]
\end{lemma}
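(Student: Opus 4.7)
The plan is to prove the lemma by supersaturation, exploiting the hypothesis that (WLOG) $yz \notin \partial H$ to decouple the embeddings of $y$ and $z$. Let $r = r(H, K_{1,n,n}^{(3)}) \le n^C$ and set $N = n^D$ with $D = D(H)$ a sufficiently large constant. Given any red/blue coloring $\chi$ of $K_N^{(3)}$ with no blue $K_{1,n,n}^{(3)}$, the goal is to produce a red copy of $H \cup \{xyz\}$.

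Write $k = v(H)$. A standard double counting using $r(H, K_{1,n,n}^{(3)}) \le n^C$ yields at least $N^k/n^{Ck+O(1)}$ red labeled copies of $H$. Let $U = V(H) \setminus \{y, z\}$, and for each consistent injection $\phi|_U : U \to [N]$ (i.e., one under which every edge of $H$ lying in $U$ becomes red) define $A(\phi|_U) \subseteq [N] \setminus \phi(U)$ to be the set of vertices $b$ such that setting $\phi(y) = b$ makes every $H$-edge containing $y$ a red triple, and define $B(\phi|_U)$ analogously for $z$. The crucial point is that because $yz \notin \partial H$, no edge of $H$ contains both $y$ and $z$, so given $\phi|_U$ the choices of $\phi(y)$ and $\phi(z)$ are independent; hence the number of red labeled copies of $H$ factors as $\sum_{\phi|_U} |A(\phi|_U)| \cdot |B(\phi|_U)|$ up to negligible diagonal terms. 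A simple pigeonhole then yields a good $\phi|_U$ with $|A|, |B| \ge 2n$: any bad $\phi|_U$ contributes at most $2nN$ to the sum, so the total bad contribution is at most $2nN^{k-1}$, which for $N \ge n^{Ck+2}$ is strictly less than $N^k/n^{Ck+O(1)}$.

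Fixing such a good $\phi|_U$, the proof concludes as follows. If $x \in U$, then $\phi(x) = a$ is already determined, and I would check whether some $b \in A$, $c \in B$ with $b \ne c$ has $abc$ red: if so, extending $\phi|_U$ by $y \mapsto b$, $z \mapsto c$ gives the desired red copy of $H \cup \{xyz\}$; otherwise all such triples are blue, and choosing disjoint $V_1 \subseteq A$, $V_2 \subseteq B$ of size $n$ (possible since $|A|, |B| \ge 2n$) produces a blue $K_{1,n,n}^{(3)}$ with pivot $a$, contradicting the hypothesis. The case $x \notin V(H)$ is handled identically, with $a$ chosen freely from $[N] \setminus (\phi(U) \cup A \cup B)$, while the cases where $y$ or $z$ is not in $V(H)$ are strictly easier since then $A$ or $B$ equals $[N] \setminus \phi(U)$ automatically. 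The main obstacle, and what the hypothesis $yz \notin \partial H$ is designed to overcome, is precisely the decoupling: without it, the count of red copies would not factor cleanly as $\sum |A| \cdot |B|$, and the pigeonhole step producing a $\phi|_U$ with both $|A|$ and $|B|$ large would no longer go through.
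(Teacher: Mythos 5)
Your proof is correct and follows essentially the same strategy as the paper's: use the hypothesis $r(H,K_{1,n,n}^{(3)}) = n^{O(1)}$ together with a random-subset (or equivalent double-counting) supersaturation argument to find many red copies of $H$, pigeonhole on the positions of $V(H)\setminus\{y,z\}$ to obtain a partial embedding with large flexible sets $A,B$ for the two vertices not joined in $\partial H$, then invoke the absence of a blue $K_{1,n,n}^{(3)}$ to find a red triple pivoted at the image of the third vertex, and finally use the independence of the constraints on $y$ and $z$ (which is exactly what $yz\notin\partial H$ buys) to combine everything into a red $H\cup\{xyz\}$. The paper streamlines a couple of points you handle by cases, namely it adds isolated vertices so that $\{x,y,z\}\subseteq V(H)$ from the outset, and it gets $|S|,|T|\ge A^{v(H)}$ from $|S||T|\ge A^{3v(H)}$ together with the trivial upper bound $|S|,|T|\le A^{2v(H)}$ rather than by bounding the aggregate contribution of bad partial embeddings, but these are only cosmetic differences.
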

 
To derive \cref{thm:nlognsupersaturation}, observe that if $H$ satisfies the conditions of that theorem, we can apply \cref{lem:1skeleton} repeatedly to get $r(H,K_{1,n,n}^{(3)})=n^{O(1)}$ and  \cref{lem:K1nn} then implies that $r(H,K_{n,n,n}^{(3)})\leq n^{O(n)}$. It will therefore suffice to prove the two lemmas.

\begin{proof}[Proof of \cref{lem:K1nn}]
    Fix $n$ and let $A=r(H,K_{1,n,n}^{(3)})$. Let $\chi$ be a $2$-coloring of the complete $3$-graph on $A^{3n}$ vertices and suppose that $\chi$ has no red copy of $H$.

    Taking a random induced subgraph on $A$ vertices, as there is no red copy of $H$, there must be a blue copy of $K_{1,n,n}^{(3)}$. Since every copy of $K_{1,n,n}^{(3)}$ has probablity at most
    \[\frac{\binom{A^{3n}-(2n+1)}{A-(2n+1)}}{\binom{A^{3n}}{A}}= \frac{\binom{A}{2n+1}}{\binom{A^{3n}}{2n+1}}\leq A^{-(2n+1)(3n-1)}<A^{-6n^2-1}\]
    of being chosen, there must be at least $A^{6n^2+1}$ blue copies of $K_{1,n,n}^{(3)}$. Thus, at least
    \[\frac{A^{6n^2+1}}{\binom{A^{3n}}{2n}\binom{2n}{n}}>A\geq n\]
    copies of $K_{1,n,n}^{(3)}$ must use the same underlying $K_{n,n}$, yielding a copy of $K_{n,n,n}^{(3)}$ in blue.
\end{proof}
\begin{proof}[Proof of \cref{lem:1skeleton}]
   By adding isolated vertices if necessary, we can and will assume that $\{x,y,z\} \subseteq V(H)$. 
   Suppose that $r(H,K_{1,n,n}^{(3)})=A$. Assume, without loss of generality, that $xy$ is not in the shadow of $H$. Let $\chi$ be a $2$-coloring of the complete graph on $A^{2v(H)}$ vertices and suppose that $\chi$ has no blue copy of $K_{1,n,n}^{(3)}$. 

    Taking a random induced subgraph on $A$ vertices, as there is no blue copy of $K_{1,n,n}^{(3)}$, there must be a red copy of $H$. Since every copy of $H$ has probability 
    \[\frac{{A^{2v(H)}-v(H) \choose A-v(H)}}{{A^{2v(H)} \choose A}} =\frac{\binom{A}{v(H)}}{\binom{A^{2v(H)}}{v(H)}}\leq A^{v(H)-2v(H)^2}\]
    of being chosen, $\chi$ must have at least $A^{2v(H)^2-v(H)}$ red copies of $H$. Thus, there must be at least
    \[A^{2v(H)^2-v(H)-2v(H)(v(H)-2)}=A^{3v(H)}\]
    copies of $H$ in red that have all vertices except $x$ and $y$ in a fixed location. Let $F$ be the set consisting of these copies. Let $z_0$ be the location where $z$ appears in these copies and let $S$ and $T$ be the possible locations for $x$ and $y$ among these copies. Then $|S||T|\geq A^{3v(H)}$, so $|S|,|T|\geq A^{v(H)}>2n$. Since $\chi$ contains no blue $K_{1,n,n}$, there must be a red edge with one vertex at $z_0$ and one vertex each in $S$ and $T$, say at $s$ and $t$.

    Since $xy$ is not in the shadow of $H$, the positions of $x$ and $y$ in $S$ and $T$ are independent after the positions of the remaining vertices are fixed, so there must be a copy of $H$ in $F$ such that $x$ and $y$ are sent to $s$ and $t$. Thus, adding the red edge $z_0st$ to this copy, we obtain a copy of $H\cup \{xyz\}$. Therefore,
    \[r(H\cup\{xyz\},K_{1,n,n}^{(3)})\leq r(H,K_{1,n,n}^{(3)})^{2v(H)},\]
    as required.
\end{proof}

\section{Stepping up for linear hypergraphs versus cliques}\label{sec:steppingupintro}

In this section, we prove \cref{thm:linearsteppingup}, which states that for all sufficiently large $k$ there exists a linear $3$-graph $F$ on $k$ vertices such that
\[
r(F, K_{n}^{(3)}) > 2^{c(\log n)^{\sqrt{k}/(128\log^5k)}}, 
\]
where $c > 0$ depends only on $k$, though we note that we have not made a serious attempt at optimizing the power on the log. The proof divides into two parts: first constructing a ``quasirandom" $F$ which has many edges between certain appropriately chosen vertex subsets and then using the stepping-up technique to give a coloring with no red copy of $F$ and no blue copy of $K_n^{(3)}$. 
These objectives are accomplished separately in the next two subsections.

\subsection{Constructing $F$}

In this section, we make use of a probabilistic technique pioneered by Krivelevich~\cite{Kr} to show that there are linear hypergraphs with certain ambient properties that will be needed for our stepping-up argument.

\begin{lemma}\label{steiner}
For all sufficiently large  $k$, there is a linear triple system $F$ on $k$ vertices with the following three properties:

\begin{enumerate}

\item Every vertex subset of order $k^{3/5}$ has at least one edge.

    \item For any two disjoint subsets $S,T\subset V(F)$ with $|S\cup T| > k/4$, $|S| > \log^2 k$ and $|T| > k^{1/2}\log k$, there is at least one edge with one vertex in $S$ and two vertices  in $T$.

    \item For any disjoint vertex subsets $A_1,\ldots, A_t,S \in V(F)$ with $|S| > k/4$, $|A_i| \le k^{1/2}\log k$, $|A_1\cup \cdots \cup A_t| \ge k/(16\log k)$, there are at least 
    $$\frac{|S|}{500k} \sum\limits_{i,j}|A_i||A_j|$$
    \noindent edges in $F$ with the property that one vertex is in $S$ and the other two are in distinct parts $A_i$ and $A_j$.  

\end{enumerate}

\end{lemma}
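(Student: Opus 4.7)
The natural approach is the probabilistic method. Take $F$ to be the linear hypergraph obtained from the random $3$-uniform hypergraph on $k$ vertices in which each triple is included independently with probability $p = C/k$, for a sufficiently small absolute constant $C>0$, by deleting one edge from each pair of triples sharing two common vertices. Each of the three properties is then verified by combining first-moment calculations, Chernoff concentration, and a union bound, while controlling the damage caused by the linearization step.

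For property 1, a fixed $k^{3/5}$-vertex set spans $\Theta(k^{4/5})$ edges in expectation, and there are only $2^{O(k^{3/5}\log k)}$ such sets, so Chernoff plus a union bound yields at least one surviving edge in every such set with high probability. For property 2, a direct computation shows that in both extreme regimes (small $|S|\approx\log^2 k$ with $|T|$ near $k/4$, or $|S|$ near $k/4$ with $|T|\approx k^{1/2}\log k$) the expected number of triples with one vertex in $S$ and two in $T$ is $\Omega(k\log^2 k)$, which dominates the $O(k)$ entropy of the union bound over pairs $(S,T)$. For property 3, the expected count of the edges of the desired type is exactly $p|S|\sum_{i<j}|A_i||A_j|$, matching the target lower bound $\tfrac{|S|}{500k}\sum_{i,j}|A_i||A_j|$ up to a multiplicative factor that can be absorbed into $C$.

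The linearization step removes at most one triple per pair of conflicting triples. For any fixed triple $e$ of the random hypergraph the expected number of triples sharing a pair with $e$ is $O(pk) = O(C)$, so the expected fraction of relevant edges lost in each property is $O(C)$. Choosing $C$ small enough ensures at least half of the relevant edges in each configuration survive, so the same concentration arguments go through for $F$.

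The main obstacle is property 3, because the union bound ranges over all unordered disjoint tuples $(A_1,\dots,A_t,S)$, the number of which is at most $2^{\Theta(k\log k)}$ since $t$ can be as large as $\Theta(k^{1/2}/\log^2 k)$. The Chernoff exponent must beat this combinatorial entropy in the worst case, which occurs when $|A_1\cup\cdots\cup A_t|$ is near the lower threshold $k/(16\log k)$ and each $|A_i|$ is near the upper threshold $k^{1/2}\log k$. Here the expected count is of order $k^2/\log^2 k$, which is polynomially larger than $k\log k$ and therefore suffices for $k$ large. Executing this bookkeeping uniformly over all size parameters, while simultaneously verifying the other two properties and accounting for the deletions, is the delicate part of the proof.
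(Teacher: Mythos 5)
Your high-level plan (random $3$-graph at edge probability $\Theta(1/k)$, then delete to kill the pairs of edges sharing two vertices, then verify the three properties by Chernoff plus a union bound) matches the paper's outline, but the deletion step as you describe it does not actually yield the concentration you need, and that is where the real difficulty lives. You propose to delete one edge from each conflicting pair and then argue that, since each fixed edge expects to conflict with only $O(pk)=O(C)$ others, the \emph{expected} fraction of relevant edges lost is $O(C)$. That expectation bound is correct, but it says nothing about the number of edges deleted inside a \emph{particular} configuration $(S,T)$ or $(A_1,\dots,A_t,S)$, and you need a statement that holds simultaneously for all $2^{\Theta(k\log k)}$ configurations. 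The number of copies of the ``two edges on four vertices'' pattern $B$ meeting a fixed configuration has a heavy upper tail in $G^{(3)}(k,p)$, so a naive second-moment or Chernoff bound on the deletions will not close the union bound; you cannot just say ``at least half survive in each configuration.''

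The paper sidesteps exactly this obstacle using Krivelevich's packing trick: rather than deleting greedily, it fixes a \emph{maximal edge-disjoint} family $\mathcal{B}$ of copies of $B$ and removes both edges of each member, which still destroys all copies of $B$ by maximality but caps the number of edges removed from any $(S,T)$-configuration at twice $Z_{S,T}$, the maximum size of an edge-disjoint packing of $B$-copies touching that configuration. The whole point is that $Z_{S,T}$, unlike the raw count $Y_{S,T}$ of $B$-copies, enjoys a genuine exponential upper tail (this is Krivelevich's Claim~1: $\Pr(Z\ge c\,\E Y)<e^{-c(\log c-1)\E Y}$), precisely because the copies being counted are edge-disjoint. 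Combined with a Chernoff lower tail on the edge count $X_{S,T}$, this gives the event $\{X>10Z\}$ with enough probability to survive the union bound. Without replacing your ad hoc deletion and first-moment damage estimate with this packing argument (or an equivalent upper-tail mechanism), the proof does not go through; the rest of your calculation of the expectations and the identification of the worst-case regimes for the union bound is essentially in line with the paper.
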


\begin{proof}
    Consider the binomial random $3$-graph $H \sim H^{(3)}(k, p)$ with $k$ vertices where each edge appears independently with probability $p=1/200k$. Let $B$ denote the unique $3$-graph with $4$ vertices and $2$ edges and let $\mathcal{B}$ denote a maximal collection of edge-disjoint copies of $B$ in $H$. Form $F$ by starting with $H$ and deleting both edges from every copy of $B$ in $\mathcal{B}$. Then, by the maximality of $\mathcal{B}$, the remaining $3$-graph $F$ has no copies of $B$ and, hence, is a linear triple system. We will now show that with high probability $F$ has the three required properties.  We start with the second and third properties and then observe that the first property can be proved similarly.

    Pick $S, T \subset V(F)$ as in the second property of the lemma. Call an edge in $H$ with one vertex in $S$ and two vertices in $T$ an $STT$ edge.
    Let $X=X_{S,T}$ be the number of $STT$ edges, let $Y=Y_{S,T}$ be the number of copies of $B$ that contain at least one $STT$ edge and let $Z=Z_{S,T}$ be the maximal number of pairwise edge-disjoint copies of $B$, each containing at least one $STT$ edge. Obviously, $Z \le Y$. Define the event $$A=A_{S,T}= \{X> 10Z\}.$$ We note that if $A_{S,T}$ holds for every appropriate $S, T$, then $F$ satisfies the second property. Indeed, if we delete all edges in all copies of $B$ in $\mathcal{B}$, then the number of $STT$ edges that are deleted is at most $2Z$, leaving at least one $STT$ edge.

 Set $s=|S|$ and $t=|T|$.  Since $\EE X=ps{t \choose 2}$ and $p=1/200k$,  
  $$\frac{1}{200} \EE X = p^2ks{t \choose 2}\le \EE Y\le 3p^2ks{t \choose 2}= \frac{3}{200}\EE X.$$
 Consequently, 
$a \, \EE  Y \le \EE X \le b \, \EE Y$ for $a=200/3>60$ and $b=200$. Now
$$\Pr(\overline{A}) =\Pr(X \le 10Z) \le
\Pr\left(X \le  \frac{\EE X}{2}\right) +\Pr\left(Z \ge \frac{\EE X}{20}\right)\le 
\Pr\left(X \le  \frac{\EE X}{2}\right) +\Pr\left(Z \ge \frac{a\EE Y}{20}\right).
$$
  Krivelevich~\cite[Claim 1]{Kr} proved that, for any constant $c>0$,
$$\Pr(Z\ge c \, \EE Y) < e^{-c\,(\log c-1)\EE Y}$$
and, therefore,
$$\Pr\left(Z \ge \frac{a\EE Y}{20}\right)\le 
e^{-\frac{a}{20}(\log(a/20)-1)\EE Y}
\le
e^{-\frac{a}{20b}(\log(a/20)-1)\EE X}.$$
Note that $\log(a/20)>1$, since $a>60$. Moreover, the standard Chernoff bound gives that
$$\Pr\left(X \le  \frac{\EE X}{2}\right) < e^{-\EE X/8}.$$
Recall that $\EE X=ps{t \choose 2}=s{t \choose 2}/200k$. The number of choices for $S,T$ of orders $s,t$ is at most ${k \choose s}{k \choose t} <k^{s+t}$. Thus, in order to show that $\sum_{S,T}\Pr(\overline{A_{S,T}})=o(1)$, the inequalities above imply that we just need 
$st^2/k \gg (s+t) \log k$. Note that $st^2/k \gg s\log k$, since $t > \sqrt{k}\log k$.  Moreover, $st^2/k \gg t\log k$, since $s + t > k/4$, $s> \log^2k$ and $t> \sqrt{k}$. Therefore, $F$ does indeed satisfy the second property with high probability.

We now turn to the third property. Writing $X' = X'_{S,T}$ for the number of $STT$ edges in $F$, we first observe  that for all $S, T$ with $|S| >k/4$ and $|T|\ge k/(16 \log k)$, w.h.p.~$X' > \EE X'/2$ in $F$. Indeed, the previous argument shows that w.h.p.~the number of $STT$ edges in $F$ is at least $4X'/5 > \EE X'/2$, where, in the inequality, we used that, by the Chernoff bound, $X'>3\EE X'/4$ w.h.p.  

Now suppose that we are given $S$ and $A_1, \ldots, A_t$ as in the third property. Set $T = \cup_i A_i$. 
Since $F$ is linear and $|A_i|\le k^{1/2} \log k$ for all $i$, the number of $SA_iA_i$ edges in $F$ over all $i$ is at most 
\begin{equation} \label{ai2}\sum_i |A_i|^2 \le \frac{k}{k^{1/2} \log k} (k^{1/2}\log k)^2 = k^{3/2} \log k. \end{equation}
The $STT$ edges are either $SA_iA_j$ edges for $i \ne j$ or $SA_iA_i$ edges.  Note that ${t \choose 2} > \sum_{i\ne j}|A_i||A_j|$ and $\EE X_{S,T}=ps{t \choose 2}.$ In view of (\ref{ai2}), and $k$ sufficiently large, we have that $\sum_{i\ne j}|A_i|A_j| >k^2/2\log^2 k$.
Writing $X_{SA_iA_i}$ for the number of $SA_iA_i$ edges, we conclude that w.h.p.~the number of $SA_iA_j$ edges over all $i \ne j$ in $F$ is at least 
$$ X'_{S,T}-\sum_i X_{SA_iA_i} 
\ge \frac{\EE X_{S,T}}{2}-k^{3/2}\log k> \frac{s}{400k}\sum\limits_{i\ne j}|A_i||A_j| - k^{3/2}\log k
>\frac{s}{500k}\sum\limits_{i\ne j}|A_i||A_j|.$$

Finally, we consider the first property. To this end, suppose $S \subset V(F)$ with $|S|=k^{3/5}$. Define $X=X_S$, $Y=Y_S$, $Z=Z_S$ 
in the obvious way as above and let $A=A_S$ be the event that $X>10Z$. 
Proceeding as we did for the second property and using that $\EE X=p{s \choose 3}$, $\Pr(\overline{A}) = o(1)$ if the condition $ps^3\gg s \log k$ holds. But this is equivalent to $s^2 \gg k \log k$, which holds with plenty of room to spare.

   We have therefore shown that $F$ satisfies each property w.h.p.,  so there is indeed a linear $F$ that satisfies all three properties simultaneously.
 \end{proof}

In our analysis, we will also need the following result, which comes from a standard application of the probabilistic method.

\begin{lemma}\label{cyclecomplete}
For every $k \geq 3$, there is a positive constant $c = c(k)$ such that,  for every $n > 2^k$, there is a graph $G$ on $m =c(n/\log n)^{\frac{\sqrt{k}}{64\log^5k}}$ vertices with independence number less than $n$ where every subset with $r$ vertices, for every $k^{1/2}/(16\log^2k) \le r \le k$, induces fewer than $r^2/\log^3k$ edges.  
\end{lemma}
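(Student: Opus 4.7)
The plan is to apply the probabilistic method directly to the binomial random graph $G \sim G(m,p)$, choosing $p = C \log m / n$ for a sufficiently large absolute constant $C$, and to show that with positive probability $G$ satisfies both required properties. The independence-number bound is the easy half: the standard first-moment estimate
\[
\Pr[\alpha(G) \ge n] \le \binom{m}{n}(1-p)^{\binom{n}{2}} \le m^n e^{-pn^2/2} = m^{n(1 - C/2)}
\]
is at most $1/3$ once $C \ge 4$, since $pn = C\log m$.

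The heart of the argument is the local sparsity condition. For each $r$ in the range $[k^{1/2}/(16\log^2 k),\, k]$, set $t = r^2/\log^3 k$. Because $\log m$ will turn out to be far smaller than $n$, the mean $\binom{r}{2}p \approx r^2 C\log m/(2n)$ of induced edges in a fixed $r$-subset is much smaller than $t$. The standard Chernoff-type tail estimate $\Pr[\mathrm{Bin}(N,p) \ge t] \le (eNp/t)^t$ then gives an upper bound of $(eC\log m \log^3 k/(2n))^t$ for the probability that a specific $r$-subset violates the sparsity. Taking a union bound over the $\binom{m}{r} \le m^r$ subsets and over the at most $k$ admissible values of $r$, I would reduce the required inequality to
\[
\log m \;\le\; \frac{r}{\log^3 k}\,\log\!\frac{2n}{eC\log m \log^3 k} \;-\; O(\log k)
\]
for every such $r$. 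Since the right-hand side is increasing in $r$, the binding case is the smallest $r = k^{1/2}/(16\log^2 k)$; here $n > 2^k$ forces $\log n > k$, so the logarithm on the right evaluates to $(1-o(1))\log n$, and the constraint becomes $\log m \lesssim (\sqrt{k}/(16\log^5 k))\log n$. This beats the target $\log m = (\sqrt{k}/(64\log^5 k))\log(n/\log n)$ by a factor of four, which comfortably swallows the lower-order $O(\log k)$ error and the $\log\log n$ gap between $\log n$ and $\log(n/\log n)$.

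The only real obstacle is the bookkeeping: verifying that every logarithmic factor and constant lines up to give exactly the advertised exponent $\sqrt{k}/(64\log^5 k)$. With a multiplicative slack of four at the binding step, this is delicate but routine. No alteration or deletion step should be needed; a single application of Chernoff plus union bounds to the clean random graph $G(m,p)$ suffices to produce the desired $G$.
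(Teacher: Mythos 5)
Your proposal is correct and matches the paper's approach: both take $G \sim G(m,p)$ with $p = \Theta_k(\log n/n)$, bound the independence number by a first-moment calculation, and union-bound over $r$-subsets having too many edges, with the binding case at the bottom of the range of $r$. The only cosmetic difference is that you bound $\binom{\binom{r}{2}}{t} \le (e\binom{r}{2}/t)^t$ (Chernoff-style) where the paper uses the cruder $\binom{\binom{r}{2}}{t} \le 2^{r^2}$, and you write $p = C\log m/n$ where the paper writes $p = m^{-64\log^5 k/\sqrt{k}}$; these coincide up to a $k$-dependent constant once $m$ is substituted.
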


\begin{proof}

Consider $G(m,p)$ with $p = m^{-\frac{64\log^5k}{\sqrt{k}}}$.  Then, summing over all $r$ with $k^{1/2}/(16\log^2k) \le r \le k$, the expected number of subsets of order $r$ that induce at least $r^2/\log^3k$ edges is at most
$$\sum\limits_{r \ge \frac{k^{1/2}}{16\log^2k} }\binom{m}{r}2^{r^2}p^{r^2/\log^3k} < \sum\limits_{r \ge \frac{k^{1/2}}{16\log^2k} }m^{2r}m^{\frac{-64r^2\log^2k}{\sqrt{k}}} < 1/3.$$

\noindent Moreover, the expected number of independent sets of size $n$ is at most
$$\binom{m}{n}(1-p)^{n\choose 2} \leq m^ne^{-pn^2/2} = e^{n\log m - \frac{n^2}{2m^{64\log^5k/\sqrt{k}}}}  < 1/3,$$

\noindent where the last inequality follows from the fact that $m =c(n/\log n)^{\frac{\sqrt{k}}{64\log^5k}}$ and that $c$ may be taken sufficiently small in terms of $k$.  By Markov’s inequality and the union bound, the statement follows.
\end{proof}

\subsection{Stepping up}\label{sec:steppingupconclusion}

\begin{proof}[Proof of \cref{thm:linearsteppingup}]

Let $k$ be a sufficiently large constant that will be determined later. Let $F$ be the linear triple system on $k$ vertices given by Lemma~\ref{steiner} and let $G$ be a graph on $m =  c(n/\log n)^{\frac{\sqrt{k}}{64\log^5k}}$ vertices, where $V(G) = \{0,1,\ldots, m-1\}$, with the properties described in Lemma~\ref{cyclecomplete}.  Set $V= \{0,1,\ldots, 2^m - 1\}$.  Given subsets $U_1,U_2 \subset V$, we write $U_1 < U_2$ if $u_1 < u_2$ for all $u_1 \in U_1$ and $u_2\in U_2$. In what follows, we will use $G$ to define a coloring $\chi:\binom{V}{3}\rightarrow \{\textnormal{red, blue}\}$ of the triples of $V$ with no red copy of $F$ and no blue clique on $\binom{m + n -1}{n-1}$ vertices.

For each $v \in V$, write $v=\sum_{i=0}^{m-1}v(i)2^i$ with $v(i) \in \{0,1\}$ for each $i$. For $u \not = v$, let $\delta(u,v) \in V(G)$ denote the largest $i$ for which $u(i) \not = v(i)$.  It is easy to verify the following properties (see, for example, \cite{GRS}).

\begin{description}

\item[Property I:] For every triple $u < v < w$, $\delta(u,v) \not = \delta(v,w)$.

\item[Property II:] For $v_1 < \cdots < v_r$, $\delta(v_1,v_{r}) = \max_{1 \leq j \leq r-1}\delta(v_j,v_{j + 1})$.

\end{description}

We define $\chi:\binom{V}{3} \rightarrow \{\textnormal{red, blue}\}$ as follows.  For vertices $v_1 < v_2 < v_3$ in $V$, let $\delta_i = \delta(v_i,v_{i + 1})$ and set $\chi(v_1,v_2,v_3) =$ red if and only if $\delta_1 > \delta_2$ and $\delta_1\delta_2 \in E(G)$. We claim that this coloring does not contain a blue clique of order $\binom{m + n -1}{n-1}$.

\begin{lemma}\label{blueclique}
The coloring $\chi$ does not contain a blue clique on $\binom{m + n -1}{n-1}$ vertices.
\end{lemma}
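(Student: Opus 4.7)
The plan is to assign each vertex $v_i$ of a supposed blue clique $v_1 < v_2 < \cdots < v_t$ a signature taking values in a set of cardinality at most $\binom{m+n-1}{n-1}$. Setting $\delta_i = \delta(v_i, v_{i+1})$, I would define $\sigma(v_i) \subseteq V(G) = \{0, 1, \ldots, m-1\}$ to be the set of values of the right-to-left maxima of the prefix $\delta_1, \ldots, \delta_{i-1}$; that is, $c \in \sigma(v_i)$ iff some $j < i$ has $\delta_j = c$ and $\delta_\ell < c$ for all $\ell \in (j, i-1]$. Note that $\sigma(v_i)$ is automatically a set, since these right-to-left maxima have strictly decreasing values. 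The argument then splits into a size bound on $\sigma(v_i)$, an injectivity claim for the map $v_i \mapsto \sigma(v_i)$, and a count of the image.

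For the size bound, I would enumerate $\sigma(v_i)$ as $c_1 > c_2 > \cdots > c_p$ at positions $j_1 < \cdots < j_p$; Property II then identifies the consecutive deltas of the sub-clique $\{v_{j_1}, \ldots, v_{j_p}, v_i\}$ as exactly $c_1, c_2, \ldots, c_p$, and applying the blueness condition to each triple in this sub-clique (all of which satisfy $\alpha > \beta$) forces $\{c_a, c_b\} \notin E(G)$ for every $a < b$. Hence $\sigma(v_i)$ is independent in $G$, so $p \le \alpha(G) \le n-1$. For injectivity, I would suppose $\sigma(v_i) = \sigma(v_j)$ with $i < j$ and argue toward a contradiction. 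The minimum element of $\sigma(v_i)$ is the most recently pushed maximum, namely $\delta_{i-1}$ (vacuously a right-to-left maximum of the prefix), and similarly for $v_j$, so equality of signatures forces $\delta_{i-1} = \delta_{j-1} = c$ for some $c$. Examining the triple $(v_{i-1}, v_i, v_j)$, the left-side delta is $c$ while the right-side delta $\max(\delta_i, \ldots, \delta_{j-1})$ is at least $\delta_{j-1} = c$. Equality violates Property I directly, while a strict inequality yields some intermediate $\delta_\ell > c$ that would have popped $c$ from the maxima stack; recovering $\sigma(v_j) = \sigma(v_i)$ then requires re-pushing $c$ on top of exactly the same body $\{c_1, \ldots, c_{p-1}\}$, and one applies Property I to triples anchored at the positions of $c_{p-1}, c_{p-2}, \ldots$ in turn to show that the intermediate stack history cannot be reconciled.

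Finally, the image of $\sigma$ lies in the subsets of $V(G)$ of size at most $n-1$; the count $\sum_{k=0}^{n-1}\binom{m}{k}$ is bounded term by term using $\binom{m}{k} \le \binom{m+k-1}{k}$ and the hockey stick identity $\sum_{k=0}^{n-1}\binom{m+k-1}{k} = \binom{m+n-1}{n-1}$, giving $t \le \binom{m+n-1}{n-1}$ (with strict inequality once $n \ge 3$ and $m \ge 1$, which is the regime of interest). The main obstacle is the injectivity step: a repeated value in $\delta$ is not by itself forbidden by Property I, so distinctness of signatures does not follow from a single triple, and the technical core of the argument is the layered case analysis that peels the stack from the top down and locates a Property I violation at whichever level first forces an impossible coincidence.
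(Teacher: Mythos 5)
Your approach is correct but takes a genuinely different route from the paper's. The paper proves an Erd\H{o}s--Szekeres-type recursion (Claim~\ref{pascal}): a set with no $\delta$-increasing sequence of order $s$ and no $\delta$-decreasing sequence of order $t$ has at most $\binom{s+t-4}{t-2}$ elements, and the lemma follows since the range of $\delta$ forbids $\delta$-increasing sequences of order $m+2$, while blueness forbids $\delta$-decreasing sequences of order $n+1$ (such a sequence would produce an independent set of order $n$ in $G$). You instead build an explicit injection from the blue clique into independent subsets of $V(G)$ of size at most $n-1$, via the right-to-left maxima of the $\delta$-prefix. Your size bound on $\sigma(v_i)$ and your final count are both sound. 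The injectivity step, which you flag as the technical core, does go through, and it admits a shorter argument than the layered stack-peeling you sketch: set $M = \delta(v_i,v_j)$ and take the unique $q \in [i, j-1]$ with $\delta_q = M$ (uniqueness is Property~I applied inside $[i,j]$); then $q$ is a right-to-left maximum of $\delta_1,\dots,\delta_{j-1}$, so $M \in \sigma(v_j) = \sigma(v_i)$, giving some $p < i$ with $\delta_p = M$ and $\delta_\ell < M$ for all $\ell \in (p, i-1]$; the triple $(v_p, v_i, v_j)$ then has both consecutive deltas equal to $M$, contradicting Property~I. Notice this shows $\sigma$ is injective on \emph{any} vertex set, with blueness entering only the size bound. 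The paper's recursion is shorter and treats the two parameters symmetrically; your injection is more explicit, never needs the $\delta$-increasing branch at all, and incidentally yields the strict inequality $|A| < \binom{m+n-1}{n-1}$ (the paper's induction gives only $\le$, a harmless off-by-one).
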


Lemma \ref{blueclique} will follow from the claim below. To state it, we need some definitions. Given a subset $A\subset V$, we say that $A$ contains a \emph{$\delta$-increasing set of order $s$} if there are vertices $v_1,\ldots, v_s \in A$ such that $v_1 < v_2 < \cdots < v_s$ and

$$\delta(v_1,v_2) < \delta(v_2,v_3) < \cdots < \delta(v_{s-1},v_s).$$

\noindent Likewise, we say that $A$ contains a \emph{$\delta$-decreasing set of order $t$} if there are vertices $v_1,\ldots, v_t \in A$ such that $v_1 < v_2 < \cdots < v_t$ and

$$\delta(v_1,v_2) > \delta(v_2,v_3) > \cdots > \delta(v_{t-1},v_t).$$

\begin{claim}\label{pascal}
Let $A\subset V$ be such that $A$ does not contain a $\delta$-increasing set of order $s$ or a $\delta$-decreasing set of order $t$.  Then $|A| \leq \binom{s + t - 4}{t-2}.$
\end{claim}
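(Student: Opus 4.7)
My plan is to prove the claim by strong induction on $s+t$, exploiting the binary structure of $V=\{0,\ldots,2^m-1\}$ to split $A$ cleanly at the top differing bit. The base cases $s=2$ and $t=2$ are immediate: having no $\delta$-increasing (respectively, $\delta$-decreasing) set of order $2$ just means $|A|\le 1=\binom{t-2}{t-2}$.

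For the inductive step with $s,t\ge 3$, I would set $D=\max_{u\ne v\in A}\delta(u,v)$ and partition $A=A_0\sqcup A_1$ according to the $D$-th bit, i.e.\ $A_i=\{a\in A: a(D)=i\}$. Since $D$ is the largest value of $\delta$ appearing in $A$, every element of $A$ must agree on all bits strictly above position $D$. It follows that every pair within $A_0$ (and within $A_1$) agrees on all bits $\ge D$ and hence has $\delta<D$, while every crossing pair differs precisely at position $D$ among bits $\ge D$ and hence has $\delta=D$. Moreover, $A_0<A_1$ in the natural order and both are nonempty since $D$ is realised by some crossing pair.

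The key step is then to show that $A_0$ contains no $\delta$-increasing set of order $s-1$ and $A_1$ contains no $\delta$-decreasing set of order $t-1$. Indeed, given a hypothetical $\delta$-increasing set $v_1<\cdots<v_{s-1}$ in $A_0$, appending any $w\in A_1$ gives $\delta(v_{s-1},w)=D$, which strictly exceeds $\delta(v_{s-2},v_{s-1})<D$, producing a $\delta$-increasing set of order $s$ in $A$, a contradiction. The argument for $A_1$ is symmetric, prepending an element of $A_0$ to a hypothetical $\delta$-decreasing set of order $t-1$. Applying the inductive hypothesis to $A_0$ with parameters $(s-1,t)$ and to $A_1$ with parameters $(s,t-1)$ and summing via Pascal's identity then yields
\[|A|\;\le\;\binom{s+t-5}{t-2}+\binom{s+t-5}{t-3}\;=\;\binom{s+t-4}{t-2},\]
as required.

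The approach is structurally clean, so the main challenge is really just verifying that the strict inequalities needed to extend a $\delta$-monotone set by a crossing element do hold. This follows immediately from Properties I and II once one observes that the $\delta$-values within $A_0$ and within $A_1$ are all strictly less than $D$, while crossing pairs realise $\delta=D$ exactly. I do not anticipate any obstacle beyond this bookkeeping.
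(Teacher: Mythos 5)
Your proof is correct and follows essentially the same strategy as the paper's: split $A$ at the location of the maximum $\delta$-value and apply induction via Pascal's identity. The paper phrases the split via the index $j$ maximizing the consecutive value $\delta(v_j,v_{j+1})$ and starts its base case at $s=3$ (or $t=3$), whereas you phrase it via the top differing bit $D$ and use the simpler base case $s=2$ (or $t=2$); by Property II these produce the identical partition, so the two presentations are interchangeable. (One tiny omission: when $|A|\le 1$ the quantity $D$ is undefined, but then the bound holds trivially, so nothing is really missing.)
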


\begin{proof}
We proceed by induction on $s$ and $t$.  Let $A = \{v_1,\ldots, v_{|A|}\}$, where $v_1 < v_2 < \cdots < v_{|A|}$. For the base case $s = 3$ and $t \geq 3$, if $A$ does not contain a $\delta$-increasing set of order $3$, then we must have
$$\delta(v_1,v_2) > \delta(v_2,v_3) > \cdots > \delta(v_{|A|-1},v_{|A|}).$$
\noindent Since $A$ does not contain a $\delta$-decreasing set of order $t$, we must have $|A|\leq t-1 = \binom{s + t - 4}{t-2}.$  A symmetric argument works for the other base case $s\geq 3$ and $t = 3$.

For the inductive step assume that the statement holds for $s' < s$ or $t' < t$.  Suppose $A$ does not contain  a $\delta$-increasing set of order $s$ or a $\delta$-decreasing set of order $t$. Let $j \in \{1,\ldots, m-1\}$ be such that
$$\delta_j = \delta(v_j,v_{j + 1}) = \max_i\delta(v_i,v_{i + 1})$$
\noindent and let $A_1 = \{v_1,\ldots, v_j\}$ and $A_2 = \{v_{j + 1},\ldots, v_{|A|}\}$.  Then, by Properties I and II above, $A_1$ does not contain a $\delta$-increasing set of size $s-1$, since otherwise $A_1\cup v_{j + 1}$ contains a $\delta$-increasing set of size $s$, a contradiction.  Likewise, $A_2$ does not contain a $\delta$-decreasing set of size $t-1$, since otherwise Properties I and II would imply that $A_2 \cup v_j$ contains a $\delta$-decreasing set of size $t$, again a contradiction. Therefore, by the induction hypothesis, we have
$$|A| = |A_1| + |A_2| \leq \binom{s + t - 5}{t-2} + \binom{s+t - 5}{t-3} = \binom{s + t-4}{t-2},$$
as required.
\end{proof}

\begin{proof}[Proof of Lemma \ref{blueclique}]
    Let $A\subset V$ be such that $\chi$ colors every triple in $A$ blue.  Clearly, $A$ cannot contain a $\delta$-increasing set of order $m + 2$. Moreover, by Properties I and II above, 
    $A$ does not contain a $\delta$-decreasing set on $n + 1$ vertices, as this would correspond to an independent set of order $n$ in $G$. Therefore, by Claim~\ref{pascal}, we have $|A| \leq \binom{m + n -1}{n-1}.$
\end{proof}

Next, we show that $\chi$ does not contain a red copy of $F$.   For the sake of contradiction, suppose $\chi$ contains a copy of $F$ with vertex set $V' = \{v_1, \ldots, v_k\} \subset V$, where $v_1 < \cdots < v_k$, and let $\delta_i = \delta(v_i,v_{i + 1})$.  
In what follows, we will define a vertex partition  $$\mathcal{P}_t:V' = A_1\cup A_2 \cup \cdots \cup A_r \cup S_t\cup B_s\cup B_{s-1}\cup \cdots \cup B_1$$ such that the following conditions hold:

\begin{enumerate}

\item $t = r + s + 1$.

\item $A_1 < \cdots < A_r < S_t < B_s < \cdots < B_1.$

\item $|A_i| \le k^{1/2}\log k$, $|B_j| \le \log^2k$ and $|S_t| > k/4$.

    \item  There are $\delta^{\ast}_1 > \cdots > \delta^{\ast}_r$ such that for any $u_1 \in A_i$, $u_2 \in A_j$, $u_3 \in S_t$, where $i < j$, we have $u_1 < u_2 < u_3$ and
    $$\delta(u_1,u_2) =  \delta^{\ast}_i > \delta^{\ast}_j = \delta(u_2,u_3).$$

    \item  There are $\tilde{\delta}_1 > \cdots > \tilde{\delta}_s$ such that for any $u_1 \in B_i$, $u_2 \in B_j$, $u_3 \in S_t$, where $i < j$, we have $u_3 < u_2 < u_1$ and $${\delta}(u_3,u_2) =   \tilde{\delta}_j < \tilde{\delta}_i  = \delta(u_2,u_1).$$

    \item $|A_1\cup \cdots \cup A_r| < k/4$ and $|B_1\cup \cdots \cup B_s| < k^{3/4}$.

\end{enumerate}

\noindent

\noindent We start with $\mathcal{P}_1 :V'= S_1$. Suppose we have the partition $$\mathcal{P}_{t-1}:V' = A_1\cup \cdots \cup A_{r} \cup S_{t-1}\cup B_s\cup \cdots \cup B_1,$$ with the properties described above. 
If $|A_1\cup \cdots \cup A_{r}| < \frac{k}{4} - k^{1/2}\log k$ and $|B_1\cup \cdots \cup B_s| < k^{3/4} - \log^2k$, we define $\mathcal{P}_t$ by partitioning $S_{t-1}$ as follows.  

Let $S_{t-1} = \{v_{w}, v_{w + 1}, \ldots, v_{w'}\}$, where $w < w'$.  We define $w \leq z < w'$ such that $$\delta(v_{z},v_{z + 1}) = \max_{w\leq i < w'}\delta(v_i,v_{i + 1}).$$   
Let $T = \{v_w, \ldots, v_z\}$ and $S = \{v_{z+1},\ldots, v_{w'}\}$.  
Note that $|S\cup T| =|S_{t-1}|> k/4$. Suppose $|T| > k^{1/2}\log k$ and $|S| > \log^2k$.  By Properties I and II, $\chi$ colors every triple with two vertices in $T$ and one in $S$ blue.  However, by the second property of $F$, one such triple must be red, which is a contradiction.   Hence, we must have either $|T| \le k^{1/2}\log k$ or $|S| \le \log^2k$.

If $|T| \le k^{1/2}\log k$, then we set $A_{r+ 1} = T = \{v_w,v_{w + 1}, \ldots, v_z\}$ and $S_{t} = S = \{v_{z + 1}, v_{z + 2}, \ldots, v_{w'}\}$ and we have the partition

$$\mathcal{P}_{t} = A_1\cup \cdots \cup A_{r + 1}\cup S_{t} \cup B_s \cup \cdots \cup B_1.$$

\noindent Moreover, for $\delta^{\ast}_{r + 1} = \delta(v_z,v_{z + 1})$, this partition satisfies the required properties.

If $|S| \le \log^2k$, then we set $S_{t} = \{v_w, v_{w + 1}, \ldots , v_z\}$ and $B_{s +1} = \{v_{z + 1}, \ldots, v_{w'}\}$ and we have the partition

$$\mathcal{P}_{t} = A_1\cup \cdots \cup A_{r}\cup S_{t} \cup B_{s + 1} \cup B_s \cup \cdots \cup B_1.$$

\noindent Moreover, for $\tilde{\delta}_{s + 1} = \delta(v_z,v_{z + 1})$, this partition again satisfies the required properties.  

Let $t$ be the maximum integer such that the partition $$\mathcal{P}_t = A_1\cup \cdots \cup A_r\cup S_t\cup B_s \cup \cdots \cup B_1$$ satisfies all six properties described above.  Then $|A_1\cup \cdots \cup A_r| > k/8$ or $|B_1\cup \cdots \cup B_s| > k^{3/4}/2$.  The proof now falls into two cases.

\medskip

\noindent \emph{Case 1: $|A_1\cup \cdots \cup A_r| > k/8$.}

\medskip

By partitioning dyadically and averaging, we can conclude that there is an integer $K$ and $r'$ indices $j_1 < j_2 < \cdots < j_{r'}$ such that

$$2^K \leq |A_{j_i}| < 2^{K + 1}$$

\noindent for all $i \in \{1,\ldots, r'\}$ and

$$\frac{k}{16\log k} \leq \frac{k}{8\log_2 k} \leq  \sum\limits_{i = 1}^{r'}|A_{j_i}|\leq k.$$

\noindent Hence, each part satisfies

$$\frac{k}{32r'\log k}\leq |A_{j_i}| \leq \frac{2k}{r'}.$$

\noindent In what follows, we will show that $R=\{\delta^{\ast}_{j_1},\ldots, \delta^{\ast}_{j_{r'}}\}$ induces (nearly) quadratically many edges, contradicting the properties of $G$.

Since $|S_t| > k/4$, $|A_{j_i}| \le k^{1/2}\log k$ and $|A_{j_1}\cup \cdots \cup A_{j_{r'}}| \ge \frac{k}{16\log k}$, the third property of $F$ implies that there is an absolute constant $c'$ such that $\chi$ colors  $$f \ge \sum\limits_{i,\ell}|A_{j_i}||A_{j_{\ell}}| \frac{|S|}{500k} \geq \frac{k^2}{c'\log^2k}$$ triples, with one vertex in $S$ and the other two in distinct parts $A_{j_i}, A_{j_{\ell}}$, red.
On the other hand, by property 4 of the partition, each such red triple is obtained from an edge  $\delta^{\ast}_{j_i}\delta^{\ast}_{j_{\ell}} \in E(G)$. Since $F$ is linear, the number of such triples going across parts $(A_{j_i},A_{j_{\ell}},S)$, with one vertex in each part, is at most $|A_{j_i}||A_{j_{\ell}}|$. Writing $e_R$ for the number of edges in the induced subgraph $G[R]$, we  obtain
$$f \le \sum_{\delta^{\ast}_{j_i}\delta^{\ast}_{j_{\ell}} \in E(G)} |A_{j_i}||A_{j_{\ell}}| \le e_R  \frac{4k^2}{(r')^2}.
$$
This yields
$$e_R \ge \frac{(r')^2}{4c'\log^2 k}.$$
From
$$\frac{k}{16\log k} \le 
\sum\limits_{i = 1}^{r'}|A_{j_i}| \le r' (k^{1/2} \log k),$$
we obtain $r' \ge k^{1/2}/(16\log^2 k)$.   On the other hand, since $k$ may be assumed to be sufficiently large, the properties of $G$ imply that $e_R \leq (r')^2/(\log^3k) < (r')^2/(4c'\log^2k)$, a contradiction.

\medskip

\noindent \emph{Case 2: $|B_1\cup \cdots \cup B_s| > k^{3/4}/2$.}

\medskip

Since $1\leq |B_i| \le \log^2k$, we have $s > k^{3/5}$.  Let $u_i \in B_i$ and consider the subset $\{u_1,\ldots, u_s\} \subset V'$.  Then, by property 5 of the partition, for $u_i > u_j > u_k$, we have $\delta(u_i,u_j) > \delta(u_j,u_k)$.  Hence, by Properties I and II, every triple in $\{u_1,\ldots, u_s\}$ is blue.  However, by the first property of $F$, there must be at least one red triple, a contradiction.

\medskip

Since $|V| = 2^m = 2^{c(n/\log n)^{\frac{\sqrt{k}}{64\log^5k}}}$, for $n' = \binom{m + n}{n} \leq (m + n)^n \leq m^{2n}$, we have $\log(n') = \Theta_k(n\log n)$ and $\log\log(n') = \Theta_k(\log n$).  Hence, there is a constant $c_0$ depending only on $k$ such that 

$$r_3(F,n') > |V| = 2^m  = 2^{c(n/\log n)^{\frac{\sqrt{k}}{64\log^5k}}} \geq 2^{c_0\left(\frac{\log n'}{(\log\log n')^2}\right)^{\frac{\sqrt{k}}{64\log^5k}}}>
2^{c_0 (\log n')^{\frac{\sqrt{k}}{128\log^5k}}},$$
as required.
   \end{proof}

\section{Concluding remarks} \label{sec:conclusion}

\subsection{Pair constructions} 

All of the best known lower-bound constructions for $3$-uniform hypergraph Ramsey numbers are ``pair constructions'', which can be formally defined as follows.

\begin{definition}
    A \textit{$3$-uniform pair construction} (or, henceforth, pair construction) on vertex set $[N]$ is a coloring $\chi_{f,g}$ of the complete $3$-graph $K_N^{(3)}$ obtained from a pair of functions $f:\binom{[N]}{2} \to [p]$ and $g:[p]^3 \to \{\textnormal{red}, \textnormal{blue}\}$ by the rule $\chi_{f,g}(\{i, j, k\}) = g(f(i,j), f(j, k), f(k, i))$ for any edge $\{i, j, k\} \in \binom{[N]}{3}$ satisfying $i<j<k$. We call the parameter $p$ the \textit{complexity} of the pair construction.
\end{definition}

To see how this definition relates to existing constructions, we note that stepping-up constructions are exactly pair constructions $\chi_{f,g}$ where $f(i,j) = \delta(i,j)$ is the index of the highest significant binary digit where $i$ and $j$ differ and typically have complexity $O(\log N)$. Similarly, inducibility constructions, like those based on random tournaments or $3$-colorings, are pair constructions of constant complexity where $f:\binom{[N]}{2}\to [p]$ is a uniform random function. The two constructions described in this paper also fit firmly into this paradigm.

These observations suggest the following natural question: do all Ramsey colorings of $3$-graphs admit low-complexity encodings? More formally, if, for any $3$-graphs $G$ and $H$, we define $r^p(G,H)$ to be the smallest $n$ such that every $2$-edge-coloring of $K_{n}^{(3)}$ of complexity at most $p$ contains either a red copy of $G$ or a blue copy of $H$, then we have the following broad conjecture.

\begin{conj}
There exists an absolute constant $a > 0$ such that, for all $3$-graphs $G$ and $H$, $r^p(G,H) \ge r(G,H)^a$ whenever $p >r(G,H)^{o(1)}$.
\end{conj}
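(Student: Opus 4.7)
The plan is to attack the conjecture by showing that any near-optimal Ramsey coloring can be ``compressed'' into a pair construction of low complexity. Fix $G$ and $H$ and a coloring $\chi$ of $K_M^{(3)}$ with $M = r(G,H)-1$ that avoids both a red $G$ and a blue $H$. The goal is to produce a pair construction $\chi_{f,g}$ of complexity $p = M^{o(1)}$ on $N \ge M^a$ vertices that inherits the same forbidden-subgraph properties, from which the conjecture would follow.

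The approach has three stages. First, for each pair $(i,j) \in \binom{[M]}{2}$, associate its \emph{link vector} $L(i,j) \in \{\textnormal{red},\textnormal{blue}\}^{M-2}$, recording $\chi(i,j,k)$ as $k$ ranges over the other vertices. Two pairs with the same link vector are interchangeable for the purposes of $G$- and $H$-avoidance, so the number of essentially distinct link vectors is the natural ``complexity'' of $\chi$. The hope is that these link vectors cluster into $p = M^{o(1)}$ equivalence classes while losing only an $o(1)$-fraction of pairs. Second, argue that once such a clustering is chosen, the color of a triple $\{i,j,k\}$ is essentially determined by the three classes of $(i,j)$, $(j,k)$, $(i,k)$; this sets up the function $g$ on $[p]^3$. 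Third, greedily remove an $o(M)$-sized set of ``error'' vertices whose incident pairs witness colors outside this trivariate rule, leaving a pair construction on the remaining vertex set that faithfully reproduces $\chi$ (and hence still avoids red $G$ and blue $H$). Tuning the parameters gives the desired lower bound on $r^p(G,H)$.

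The principal obstacle lives in the first step: the naive bound on the number of distinct link vectors is $2^M$, far larger than the target $M^{o(1)}$. Controlling this would require a structural theorem saying, roughly, that Ramsey-optimal colorings are highly ``rigid'' --- their link vectors cluster into very few types. No such theorem is known, and even in the graph case the structure of near-optimal off-diagonal colorings remains poorly understood. A reasonable sub-goal would be to establish the conjecture for special families where structure is already known, such as tripartite $H$ (where polynomial Ramsey numbers admit essentially explicit constructions) or when both $G$ and $H$ are links of small graphs (where one might hope to adapt the Fox--He entropy framework). An alternative route is to use a hypergraph regularity lemma to decompose $\chi$ into quasi-random blocks indexed by pairs of parts, though reducing the resulting complexity from the tower-type bounds supplied by regularity down to $M^{o(1)}$ would seemingly require new quantitative tools, and indeed this gap is arguably the most serious barrier to the conjecture.
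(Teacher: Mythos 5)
This statement is an open conjecture in the paper; the authors do not supply a proof, and your write-up, to its credit, does not claim to close the gap either. What you offer is a heuristic programme, and the obstacle you flag in your first step is genuinely where it breaks: there is no known ``rigidity'' theorem for near-extremal Ramsey colorings that would force the $\binom{M}{2}$ link vectors to cluster into $M^{o(1)}$ types after deleting $o(M)$ vertices, and a priori a worst-case coloring could have $2^{\Omega(M)}$ pairwise-distinct link vectors with no small set of representatives. Without that input, the second and third steps never get off the ground, because the ``trivariate rule'' $g$ is only well defined once the clustering exists, and the greedy vertex-deletion step presupposes that the set of error triples is small in a structured way (concentrated on $o(M)$ vertices) rather than merely small in total count. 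So the proposal identifies the right bottleneck but leaves it exactly as open as the conjecture itself.

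Two smaller cautions worth recording. First, even granting a clustering of the link vectors, replacing each pair's true color function by the majority color over its cluster can in principle \emph{create} new monochromatic copies of $G$ or $H$ rather than merely losing a few pairs, so you would need an argument that the rounding errors can be absorbed by vertex deletion without destroying the Ramsey property. Second, the route via hypergraph regularity cannot reach $M^{o(1)}$ complexity in its current form: the number of parts produced by (weak or strong) hypergraph regularity is at best a tower-type function of $1/\varepsilon$, and one would need $\varepsilon$ to be roughly $1/\mathrm{poly}(v(G)+v(H))$ just to see a single copy of $G$ or $H$, which already gives a constant-but-enormous complexity rather than a sublogarithmic one in $\log M$. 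The paper itself gives a cautionary data point in the opposite direction: its Proposition on $r^p(K_{n,n,n}^{(3)})$ shows that complexity $p=2^{o(n)}$ is \emph{not} enough to capture $r(K_{n,n,n}^{(3)})$ up to polynomial factors, so the permissible complexity threshold in the conjecture is delicate and one cannot hope to drive $p$ down arbitrarily.
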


In other words, we believe that if there is a Ramsey coloring of order $N$, then there is also a Ramsey coloring of order $N^{\Omega(1)}$ with much smaller complexity. This would at least explain the prevalence of such constructions in Ramsey theory, but might also help shed some light on the possible growth rates for $r(H, K_n^{(3)})$, a topic which we now discuss.

While almost all known Ramsey colorings can be encoded with complexity which is polylogarithmic in $r(G,H)$, we now show that such a strong dependence is not possible in general.

\begin{prop}
    For $n, p\ge 1$, $r^p(K_{n,n,n}^{(3)}) \le 2^{O(n \log p)}$. In particular, if $p=2^{o(n)}$, then $r^p(K_{n,n,n}^{(3)}) = r(K_{n,n,n}^{(3)})^{o(1)}$.
\end{prop}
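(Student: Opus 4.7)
The plan is to show that every pair construction $\chi_{f,g}$ on $[N]$ of complexity $p$ contains a monochromatic $K_{n,n,n}^{(3)}$ once $N \ge 2^{Cn\log p}$ for a suitable absolute constant $C$. Partition $[N]$ into consecutive intervals $X_1 < X_2 < X_3$, each of size $M = \lfloor N/3 \rfloor$. It suffices to find $I \subset X_1$, $J \subset X_2$, $K \subset X_3$, each of size $n$, and colors $a,b,c \in [p]$ for which $f \equiv a$ on $I \times J$, $f \equiv b$ on $J \times K$, and $f \equiv c$ on $I \times K$; indeed, then $\chi_{f,g}(\{i,j,k\}) = g(a,b,c)$ is constant over all $i \in I$, $j \in J$, $k \in K$ with $i<j<k$, giving a monochromatic $K_{n,n,n}^{(3)}$.

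\textbf{Stage 1.} For $j \in X_2$, $k \in X_3$ and $a,c \in [p]$, set $T_j(a) = \{i \in X_1 : f(i,j) = a\}$ and $T_k(c) = \{i \in X_1 : f(i,k) = c\}$, and for $I \in \binom{X_1}{n}$ define $J_0(I,a) = \{j \in X_2 : f(i,j) = a \text{ for all } i \in I\}$ and similarly $K_0(I,c)$. Double-counting the tuples $(I,j,k,a,c)$ with $I \subset T_j(a) \cap T_k(c)$ yields
\[
\mathcal{T} \;:=\; \sum_{j,k,a,c} \binom{|T_j(a) \cap T_k(c)|}{n} \;=\; \sum_{I,a,c} |J_0(I,a)|\cdot |K_0(I,c)|,
\]
where the decoupling on the right uses that the constraints on $(j,a)$ and $(k,c)$ are independent once $I$ is fixed. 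Since $\sum_a |T_j(a)\cap T_k(c)| = |T_k(c)|$ and $\sum_c |T_k(c)| = M$, two applications of the convexity of $\binom{\cdot}{n}$ give $\mathcal{T} \ge M^2 p^2 \binom{M/p^2}{n}$. Averaging over the $p^2\binom{M}{n}$ tuples $(I,a,c)$ and using $\binom{M/p^2}{n}/\binom{M}{n} \ge (Cp)^{-2n}$ produces a choice of $(I,a,c)$ with $|J_0(I,a)|\cdot |K_0(I,c)| \ge M^2/(Cp)^{2n}$, and since each factor is at most $M$, both $|J_0|$ and $|K_0|$ individually exceed $M/(Cp)^{2n}$.

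\textbf{Stage 2.} Working inside the $p$-edge-colored bipartite graph on $J_0 \cup K_0$ induced by $f$, some color class has at least a $1/p$ fraction of the edges, so the K\H{o}v\'ari--S\'os--Tur\'an theorem yields a monochromatic $K_{n,n}$ as soon as $\min(|J_0|,|K_0|) \ge np^n$. The Stage 1 bound supplies this whenever $M/(Cp)^{2n} \ge np^n$, i.e., $M = 2^{O(n\log p)}$, establishing $r^p(K_{n,n,n}^{(3)}) \le 2^{O(n\log p)}$. For the ``in particular'' claim, a random $2$-coloring of $K_N^{(3)}$ gives $r(K_{n,n,n}^{(3)}) \ge 2^{\Omega(n^2)}$, so when $p = 2^{o(n)}$ we have $\log r^p = O(n\log p) = o(n^2) = o(\log r(K_{n,n,n}^{(3)}))$, and hence $r^p(K_{n,n,n}^{(3)}) = r(K_{n,n,n}^{(3)})^{o(1)}$.

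The main technical obstacle is Stage 1: a naive iteration of K\H{o}v\'ari--S\'os--Tur\'an, first in $(X_1,X_3)$ to pin down $I$ and then in $(I, X_2)$ to pin down $J$, fails because once $|I|=n$ the ``short side'' of the second rectangle is too small to apply the theorem again. The simultaneous product-structure double count above sidesteps this by jointly selecting $I$, $a$, and $c$ so that both $|J_0|$ and $|K_0|$ are automatically large.
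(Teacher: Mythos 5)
Your proof is correct, and its overall strategy matches the paper's — first pin down a size-$n$ set $I$ in the bottom interval together with colors $a,c$ so that $J_0(I,a)$ and $K_0(I,c)$ are both large, then finish with a single K\H{o}v\'ari--S\'os--Tur\'an application between $J_0$ and $K_0$ — but the first reduction is carried out by genuinely different machinery. The paper forms an auxiliary complete bipartite graph between $I$ and the product set $J\times K$, edge-colored by $\phi(i,(j,k))=(f(i,j),f(i,k))$, applies an unbalanced form of K\H{o}v\'ari--S\'os--Tur\'an (giving a $\Theta(\log N/\log p)$-by-$t^{0.9}$ rectangle) and then projects the large side onto the $J$- and $K$-coordinates separately; by contrast, your Stage 1 works directly from the double-count identity $\sum_{j,k,a,c}\binom{|T_j(a)\cap T_k(c)|}{n}=\sum_{I,a,c}|J_0(I,a)|\,|K_0(I,c)|$ and convexity of $\binom{\cdot}{n}$, which makes the product decoupling of the $(j,a)$ and $(k,c)$ conditions explicit and delivers the simultaneous largeness of $|J_0|$ and $|K_0|$ without the detour through an auxiliary graph or the unbalanced K\H{o}v\'ari--S\'os--Tur\'an statement. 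Both routes yield the same quality of bound $N=2^{O(n\log p)}$, and your justification of the ``in particular'' clause via the random lower bound $r(K_{n,n,n}^{(3)})\ge 2^{\Omega(n^2)}$ is the standard one and correct.
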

\begin{proof}
    The K\H{o}v\'ari--S\'os--Tur\'an theorem implies that any bipartite graph with parts of order $s \le t$ and density $q$ with $t \geq q^{-10}$ has a complete subgraph with $\Omega(\log s/\log (2/ q))$ vertices in the part of order $s$ and at least $t^{0.9}$ vertices in the part of order $t$.

    Let $\chi_{f,g}$ be a pair construction for $K_{N}^{(3)}$ with complexity $p$ and $N=2^{C n \log p}$ for some large constant $C>0$. We would like to show that $\chi$ contains a monochromatic $K_{n,n,n}^{(3)}$. Divide the vertex set into three intervals $I, J, K$ of order $N/3$. We will apply the K\H{o}v\'ari--S\'os--Tur\'an theorem to shrink $I, J, K$ until the edges between $I\times J$, $J \times K$ and $K\times I$ are each monochromatic in $f$.

    First, we build an auxiliary complete bipartite graph $B = (I, J\times K; E)$ and define an edge-coloring of $B$ by $\phi(i, (j,k)) = (f(i,j), f(i,k))$. Observing that $\phi$ takes at most $p^2$ values, there must exist $c_1$ and $c_2$ for which $\phi(i,(j,k)) = (c_1, c_2)$ for at least $|I| |J| |K|/p^2$ choices of $(i,j,k) \in I\times J \times K$. By the K\H{o}v\'ari--S\'os--Tur\'an theorem, we obtain subsets $I'\subseteq I$, $S \subseteq J\times K$ for which $|I'| = \Omega(\log N/\log(2p^2))$, $|S| \ge (|J||K|)^{0.9}$ and $\phi$ is constant on $I'\times S$. Letting $J'$, $K'$ be the set of all $j\in J$, $k \in K$ that appear in $S$, this implies that $|J'|\ge |S|/|K| \ge |J|^{0.8}$, $|K'|\ge |S|/|J| \ge |K|^{0.8}$ and $f(i,j)=c_1$, $f(i,k) = c_2$ for all $(i,j,k)\in I'\times J'\times K'$.

    Let $c_3$ be the most common value of $f(j,k)$ between $J'$ and $K'$. By the K\H{o}v\'ari--S\'os--Tur\'an theorem, we can find subsets $J''\subseteq J'$, $K''\subseteq K'$ where $|J''|, |K''| = \Omega(\log N/ \log (2p))$ and $f(j, k) = c_3$ for all pairs $(j,k) \in J'' \times K''$. Observe that, by our choice of $N$, $|I'|, |J''|, |K''| = \Omega (\log N/ \log (2p^2)) \ge n$. Since every triple $(i,j,k) \in I' \times J'' \times K''$ is assigned the same color $\chi_{f,g}(i,j,k)=g(c_1, c_2, c_3)$, we are done.
\end{proof}

Therefore, if $N = r(K_{n,n,n}^{(3)}) = 2^{\Theta(n^2)}$, the minimum complexity needed to obtain $r^p(K_{n,n,n}^{(3)}) \ge N^{\Omega(1)}$ is $p=2^{\Omega(n)} \ge 2^{\Omega(\sqrt{\log N})}$. One can show that this is tight by picking uniformly random $f$ and $g$.

%\vspace{3mm}
%\noindent
\subsection{What growth rates are possible?} 

Adding to the earlier results of Fox and He~\cite{FH}, this paper shows that there are many fixed $3$-graphs $H$ for which $r(H, K_{n}^{(3)}) = 2^{\Theta_H(n \log n)}$. Strangely, we do not have any examples of $H$ for which $r(H, K_{n}^{(3)}) = 2^{\Theta_H(n)}$, though it is natural to conjecture their existence. 

\begin{conj}\label{conj:exponential}
There exists a $3$-graph $H$ for which $r(H, K_{n}^{(3)}) = 2^{\Theta_H(n)}$.
\end{conj}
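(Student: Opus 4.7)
The plan is to find a single $3$-graph $H$ realizing $r(H, K_n^{(3)}) = 2^{\Theta_H(n)}$. The lower bound is essentially free from the Erd\H{o}s--Hajnal construction: \cref{thm:EHmpair} already gives $r(H, K_n^{(3)}) \geq r(H, K_{n,n,n}^{(3)}) \geq 2^{\Omega_H(n)}$ for any $H$ with $\mpair(H) > 1/3$, i.e.~any $H$ that is not $123$-inducible. The real difficulty is to find an $H$ for which a matching upper bound $r(H, K_n^{(3)}) \leq 2^{O_H(n)}$ can be proved.

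A key preliminary observation sharply constrains the search: any candidate $H$ must also satisfy $\mpair(H) < 1/2$. Otherwise, \cref{thm:mainmpair} (which, as the paper remarks after its statement, applies equally well with $K_{n,n,n}^{(3)}$ replaced by $K_n^{(3)}$) forces $r(H, K_n^{(3)}) \geq 2^{\Omega_H(n \log n)}$, incompatible with $2^{O_H(n)}$. So the first step of the plan is to exhibit an $H$ with strictly intermediate pair density, $1/3 < \mpair(H) < 1/2$. The definition of $\mpair$ is monotone under taking subhypergraphs, so such an $H$ must avoid every subhypergraph with $\mpair = 1/2$ (for instance $K_4^{(3)}$ minus one edge, or the tight cycles $C_m^{(3)}$ with $3 \nmid m$), yet itself must still fail to be $123$-inducible. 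I would attack this question both by computer search over small $H$ and by attempting to graft a minimal non-$123$-inducible obstruction onto a Berge-acyclic skeleton in such a way that some clever non-injective pair homomorphism collapses the obstruction without creating a Berge cycle in the image.

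The second step is to prove the matching upper bound, which I would try to do by reverse-engineering the Erd\H{o}s--Hajnal construction. Given a $2$-coloring $\chi$ of $K_N^{(3)}$ with $N = 2^{Cn}$ and no blue $K_n^{(3)}$, use a dependent random choice or regularity-type argument to find a large vertex subset on which $\chi$ is approximately governed by a pair-coloring of its shadow, then use the Berge-acyclic realization of $H$ guaranteed by $\mpair(H) < 1/2$ to embed a red $H$ greedily in only $O_H(1)$ rounds. The main obstacle, beyond producing the candidate $H$, is that all known upper-bound techniques for $r(H, K_n^{(3)})$ with $H$ not $123$-inducible uniformly lose at least a $\log n$ factor in the exponent, and defeating this loss will require a genuinely new embedding argument targeted to the Berge-acyclic structure of $f(H)$ for the optimal $f$. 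It is moreover entirely conceivable that $\mpair$ obeys a sharp dichotomy, taking values only in $\{1/3\} \cup [1/2, \infty)$; if so the first step of the plan fails and, at least as approached through $\mpair$, \cref{conj:exponential} would in fact be false. Settling this dichotomy is therefore where the work should begin.
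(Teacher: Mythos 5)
This statement is labeled as a conjecture in the paper and is presented as an open problem; the paper does not prove it, and neither does your proposal, which you honestly frame as a plan rather than a proof. So there is no proof to compare against --- what can be compared is your plan of attack against the paper's own discussion surrounding the conjecture.

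Your preliminary observation, that any candidate $H$ must satisfy $1/3 < \mpair(H) < 1/2$, is exactly the viewpoint the paper takes: it follows \cref{conj:exponential} immediately with \cref{mpairexistence}, which constructs such an $H$ explicitly (by linearly gluing two link-paths and showing the result is not $123$-inducible), and then proposes that $H$ as a ``promising candidate for verifying \cref{conj:exponential}''. This resolves, in the negative, the dichotomy you raise as a worry in your final paragraph: $\mpair$ does take values strictly between $1/3$ and $1/2$, so the first step of your plan does not fail. You would have found this by reading a few lines further in the concluding section rather than proposing a computer search or a grafting construction from scratch; your instinct to graft a non-$123$-inducible obstruction onto a Berge-acyclic skeleton is precisely the mechanism the paper's construction uses, though the paper's specific realization (two link-paths with carefully chosen identifications) is simpler than what your sketch suggests.

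The genuine gap --- for both you and the paper --- is your second step, the upper bound $r(H,K_n^{(3)}) \le 2^{O_H(n)}$. Your proposal to ``reverse-engineer'' the Erd\H{o}s--Hajnal argument via dependent random choice or regularity is speculative, and you correctly note that all known embedding arguments in this regime lose a $\log n$ factor. The paper offers no upper-bound argument either; it merely identifies the candidate. So the conjecture remains open, and your proposal does not close it, but your structural analysis of what a proof must look like is sound and matches the authors' own thinking.
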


We now explicitly construct a $3$-graph $H$ with $\frac{1}{3}<\mpair(H)<\frac{1}{2}$. Since \cref{thm:mainmpair} does not apply to this $H$ and yet $r(H, K_{n}^{(3)}) \ge 2^{\Omega_H(n)}$ by \cref{thm:EHmpair}, it may be a promising candidate for verifying \cref{conj:exponential}.

\begin{prop}\label{mpairexistence}
    There exists a $3$-graph $H$ with $\frac{1}{3}<\mpair(H)<\frac{1}{2}$.
\end{prop}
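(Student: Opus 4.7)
The plan is to find an explicit $3$-graph $H$ satisfying two constraints and verify each in turn: first, that $\mpair(H)>\tfrac13$, i.e., $H$ is not $123$-inducible; second, that $\mpair(H)<\tfrac12$, i.e., $H$ admits a pair homomorphism into an oriented $3$-graph whose image contains no Berge cycle (equivalently, by \cref{lem:avoidmpair}, $H$ is not avoidable). To keep the second step manageable, I would look for $H$ satisfying the sparseness condition
\[
|\partial H'| > 2e(H') \quad \text{for every nonempty } H'\subseteq H.
\]
Under this condition, one can take $f:\partial H\to V(G)$ to be injective (sending distinct pairs to distinct new vertices), which forces $v(f(H'))=|\partial H'|$ and $e(f(H'))=e(H')$ for every subhypergraph $H'$, so $\max_{H'\subseteq H}e(f(H'))/v(f(H'))<\tfrac12$. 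This immediately yields $\mpair(H)<\tfrac12$.

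To establish $\mpair(H)>\tfrac13$, I would do an exhaustive case analysis on orderings of $V(H)$. Recall that an ordering witnesses $123$-inducibility exactly when, for every pair $\{x,y\}\in\partial H$ that lies in multiple edges $xyz_1,xyz_2,\dots$, the vertices $z_1,z_2,\dots$ all bear the same relation (above, below, or between) to $\{x,y\}$ in the ordering. The goal in this step is to show that for $H$ these shared-pair constraints, interpreted as a CSP over total orders on $V(H)$, have no solution. This is typically a finite case analysis, possibly reduced via symmetry and partial-order deductions.

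Step 1, selecting $H$, is the real crux and the main obstacle. The natural minimally non-$123$-inducible $3$-graphs all satisfy $|\partial H|=2e(H)$ with equality: this is the case for $K_4^{(3)}-e$ (which is $L_{K_3}$), for tight cycles $C_n^{(3)}$ with $3\nmid n$ (by \cref{tightcycleprop}), and for links $L_{C_n}$ of odd cycles (by \cref{linkcycleprop}). In each of these, the "constraint cycle" that obstructs $123$-inducibility is tight in exactly the sense that forces avoidability. Hence $H$ must avoid every avoidable subhypergraph, in particular must not contain $K_4^{(3)}-e$ as a subhypergraph, nor any odd tight cycle, nor any $L_{C_n}$ for $n$ odd, yet still be globally non-$123$-inducible.

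My concrete search strategy would be to build $H$ by starting from a $123$-inducible ``skeleton'' (for example a long tight path or a tripartite base) and carefully adding edges that (i) introduce new shared pairs creating a globally inconsistent system of relations, while (ii) neither creating an avoidable subgraph nor destroying the strict inequality $|\partial H'|>2e(H')$. Candidates to test include blow-ups or vertex-splittings of $L_{C_3}$ and of short tight cycles, where a carefully asymmetric splitting might preserve enough constraint-cyclicity to block $123$-inducibility while loosening the density. Once such an $H$ is in hand, Steps 2 and 3 are routine finite verifications; the work is in locating the right gadget.
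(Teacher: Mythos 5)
Your framework is correct, and it closely mirrors the paper's: both reduce $\mpair(H)>\tfrac13$ to non-$123$-inducibility and both get $\mpair(H)<\tfrac12$ from a sparseness condition (your $|\partial H'|>2e(H')$ for all nonempty $H'\subseteq H$ is implied by, and serves the same purpose as, the paper's condition that $H$ can be built one edge at a time with each new edge contributing at most one previously-used shadow pair). Your argument that an injective $f:\partial H\to V(G)$ then witnesses $\mpair(H)<\tfrac12$ is sound. You also correctly identify the real obstruction: the familiar minimal non-$123$-inducible $3$-graphs ($K_4^{(3)}-e=L_{C_3}$, non-tripartite tight cycles, links of odd cycles) all have $|\partial H|=2e(H)$ exactly, so one needs a gadget that is globally non-$123$-inducible while remaining strictly sparse on every subhypergraph.

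However, the proposal has a genuine gap: it never actually produces $H$. You explicitly acknowledge this (``Step 1, selecting $H$, is the real crux and the main obstacle'') and only sketch a search strategy without carrying it out. Since the proposition is an existence statement, the construction of a witness \emph{is} the proof, and everything else is scaffolding. The paper fills precisely this gap by taking $H$ to be a linear gluing of two $12$-edge link paths (the link $3$-graphs of two paths $u_1\cdots u_{12}$ and $v_1\cdots v_{12}$, glued at eight specified vertex pairs such as $v_1\leftrightarrow u_2$, $v_2\leftrightarrow u_5$, etc., with distinct centers $u,v$). Because the gluing is linear and the identified pairs are non-consecutive along each path, each edge of the combined $3$-graph shares at most one shadow pair with the earlier edges, giving $\mpair(H)<\tfrac12$. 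Non-$123$-inducibility is then shown via a lemma that in any $123$-coloring of a link path the ground vertices must alternate above/below in the ordering, followed by a four-case contradiction driven by the chosen identifications. If you want to complete your proposal along your proposed lines, you would need to specify a concrete $H$ and carry out the CSP-style case analysis; the paper's link-path gluing is one such choice, and it is not clear that the blow-up/vertex-splitting candidates you mention would avoid creating an avoidable subhypergraph.
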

\begin{proof}
First we show that if $H$ can be constructed by repeatedly adding edges $\{u,v,w\}$ such that at most one of the three pairs $\{u,v\}$, $\{v,w\}$ and $\{w,u\}$ was previously in an edge together, then $\mpair(H)<1/2$. Indeed, suppose $f:\partial H \rightarrow V(G)$ is a pair homomorphism with $e(G)/v(G) < \frac{1}{2}$, $H' = H\cup \{u,v,w\}$ and, without loss of generality, among the three pairs only $\{u,v\}$ lies in the shadow of $H$. Then there is a pair homomorphism $f':\partial H' \rightarrow V(G')$ where $G'$ is obtained from $G$ by adding two new vertices $x=f'(v,w)$ and $y=f'(w,u)$ and one new edge $(f(u,v), x, y)$. But then 
\[
\frac{e(G')}{v(G')} = \frac{e(G) + 1}{v(G)+2} < \frac{1}{2}.
\]

A link path is the link $3$-graph of a path. We construct $H$ by linearly gluing two link paths, by which we mean that we identify some of the vertices in the two paths while insisting that they do not share edges. 
%such that the shadows of the paths don't share edges. 
Such a gluing satisfies the property above, since each link path can be built iteratively, and so $\mpair(H)<\frac{1}{2}$. It remains to check that $\mpair(H) > \frac{1}{3}$, i.e., that $H$ is not $123$-inducible.

Our construction is as follows. Let $U$ with vertices $u,u_1,\ldots,u_{12}$ and $V$ with vertices $v,v_1,\ldots,v_{12}$ be the links of the paths $u_1u_2\cdots u_{12}$ and $v_1v_2\cdots v_{12}$, respectively. We identify certain $u_i$ to certain $v_j$ as follows:
\begin{itemize}
    \item Glue $v_1$ to $u_2$, $v_2$ to $u_5$, $v_5$ to $u_6$ and $v_6$ to $u_1$.

    \item Glue $v_7$ to $u_{11}$, $v_8$ to $u_8$, $v_{11}$ to $u_7$ and $v_{12}$ to $u_{12}$.
\end{itemize}
It is easy to check that this is a linear gluing. We call the resulting graph $H$ and claim that it is not $123$-inducible.

\begin{lemma}
    If $W$ is the $w$-link of the path $w_1w_2\cdots w_k$, then, in any total order on $w,w_1,\ldots,w_k$ that leads to a valid $123$-coloring of $W$, either $w_1<w_2>w_3<w_4>\cdots$ or $w_1>w_2<w_3>w_4<\cdots$.
\end{lemma}

\begin{proof}
    For any valid $123$-coloring and any edge $abc$, we may determine the relative order of $a$, $b$ and $c$ if we know the label of the $2$-edge $ab$ and the relative order of $a$ and $b$. Thus, since the edges $ww_iw_{i+1}$ and $ww_{i+1}w_{i+2}$ share the $2$-edge $ww_{i+1}$, $ww_{i+1}w_i$ and $ww_{i+1}w_{i+2}$ must have the same relative order. In particular, $w_i<w_{i+1}$ if and only if $w_{i+1}>w_{i+2}$, completing the proof of the lemma.
\end{proof}

Using this lemma, it is not difficult to show that $H$ is not $123$-inducible. Suppose, for the sake of contradiction, that we have a valid total ordering on the vertices of $H$ leading to a $123$-coloring. We have four cases:

\medskip

\noindent
Case 1: $u_1<u_2$ and $v_1<v_2$. In this case, the lemma implies $u_5<u_6$ and $v_5<v_6$. However, $v_1=u_2$ and $v_2=u_5$, so $u_1<u_2=v_1 < v_2 = u_5 < u_6$. However, $v_5=u_6$ and $v_6=u_1$, contradicting $v_5<v_6$.

\noindent
Case 2: $u_1>u_2$ and $v_1>v_2$. This is exactly the same as Case 1, but with the inequality signs flipped.

\noindent
Case 3: $u_1<u_2$ and $v_1>v_2$. The lemma implies that $u_7<u_8$ and $u_{11}<u_{12}$, while $v_7>v_8$ and $v_{11}>v_{12}$. Since $v_7=u_{11}$ and $v_8=u_8$, we have that $u_7<u_8=v_8<v_7=u_{11}<u_{12}$. But $v_{11}=u_7$ and $v_{12}=u_{12}$, contradicting $v_{11}>v_{12}$.

\noindent
Case 4: $u_1>u_2$ and $v_1<v_2$. This is exactly the same as Case 3, but with the inequality signs flipped.

\medskip

Since we have a contradiction in each case, $H$ is not $123$-inducible, as required.
\end{proof}

%\begin{prop}
%    For every odd $p\ge 3$, there exists a $3$-graph $H$ with $\mpair(H) = \frac{p-1}{2p}$.
%\end{prop}
%\begin{proof}
%    We illustrate the proof for $p=5$; the general case is similar.
%    Let $n$ be large, and let $f:\binom{[n]}{2}\rightarrow [5]$ be a function. Let $P_p$ be the oriented $3$-uniform loose path on vertex set $[5]$, with edges $(1,2,3),(3,4,5)$. Take $H_f$ to be the set of all triples $i,j,k\in[n]$ with $i<j<k$ and $(f(i,j),f(j,k),f(k,i)) \in E(P)$. Observe that $f$ is a pair homomorphism from $H$ to $P_p$, so $\mpair(H) \le \frac{e(P_p)}{v(P_p)} = \frac{2}{5}$. We want to show equality is achieved for at least one such $H_f$.

%    We claim that if $f$ is chosen uniformly at random out of $5^{\binom{n}{2}}$ choices, then with high probability $H_f$ uniquely determines its pair homomorphism $f$ to $P$. This is because whenever we have four vertices $i<j<k<\ell$ in $H_f$ where $(i,j,k)$ and $(i,k,\ell)$ are both edges, this uniquely determines that $(f(i,j), f(j,k),f(i, k), f(k,\ell), f(\ell, i)) = (1,2,3,4,5)$.  
    
%    For each odd $p\ge 3$, let $\cal{H_p}$ be the family of all such $H_f$ as $f$ varies over functions  $f:\binom{[n]}{2}\rightarrow [p]$. Observe that $
%\end{proof}

If we allow families $\cal H$ instead of single 3-graphs $H$, then the growth rate $2^{\Theta_{\cal H}(n)}$ can occur. 
Indeed, letting $\cal H$ be the family of links of nonbipartite graphs, we quickly see that the 3-graph whose edges are the cyclic triangles in a random tournament provides the lower bound. On the other hand, the upper bound $2^{n}-1$ follows by applying induction to the larger part of the bipartition determined by the link of a vertex in an $\cal H$-free 3-graph. 

More generally, one might ask what growth rates are possible. For instance, are there fixed $3$-graphs $H$ for which $r(H, K_{n}^{(3)})$ equals, say, $2^{\Theta_H(\sqrt{n})}$, $2^{\Theta_H(n \log \log n)}$ or even $2^{\Theta_H(n \log^* n)}$? 

However, the outstanding question on off-diagonal hypergraph Ramsey numbers is to decide whether there are fixed $3$-graphs $H$ for which $r(H, K_{n}^{(3)}) \ge 2^{\omega(n \log n)}$. It seems likely that this should already be the case for $H = K_4^{(3)}$, but our methods seem insufficient for proving this. We even suspect the following should be true.

\begin{conj}
For every $c > 0$, there exists $s$ such that $r(K_s^{(3)}, K_{n}^{(3)}) \ge 2^{\Omega_s(n^c)}$.
\end{conj}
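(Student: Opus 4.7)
The plan is to attack the conjecture by iterating the pair construction of Theorem~\ref{thm:linearsteppingup}, seeking to convert its polylogarithmic exponent into a polynomial one. The base case is Theorem~\ref{thm:linearsteppingup} itself, which for every fixed $k$ produces a linear $3$-graph $F_k \subseteq K_k^{(3)}$ with $r(F_k, K_n^{(3)}) > 2^{c(\log n)^{\alpha(k)}}$, where $\alpha(k) = \sqrt{k}/(128\log^5 k) \to \infty$ as $k\to\infty$. Since $F_k$ embeds in $K_k^{(3)}$, this gives the statement with any fixed polylogarithmic exponent in place of $n^c$, leaving the main task to boost polylog to polynomial.

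The iterative step I would pursue is as follows. At level $t$, suppose we have already constructed a $3$-graph $F^{(t)}$ on $s_t$ vertices together with a coloring $\chi^{(t)}$ of $\binom{V_t}{3}$ avoiding red $F^{(t)}$ and blue $K_n^{(3)}$, with $|V_t| = 2^{f_t(n)}$. I would use $\chi^{(t)}$ itself in place of the auxiliary graph $G$ from Lemma~\ref{cyclecomplete} in the argument for Theorem~\ref{thm:linearsteppingup}, and design a new linear base hypergraph $F^{(t+1)}$ with quasirandom properties along the lines of Lemma~\ref{steiner}, but tailored to a $3$-uniform rather than $2$-uniform ambient structure. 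The resulting coloring $\chi^{(t+1)}$ would live on $\binom{V_{t+1}}{3}$ with $|V_{t+1}| = 2^{|V_t|}$, and a Berge-cycle-style avoidance argument should certify that there is no red copy of $F^{(t+1)}$ and no blue copy of $K_n^{(3)}$, with $s_{t+1}$ a polynomial function of $s_t$ and the internal parameters. If the exponent amplification at each level were, say, $f_{t+1}(n) \ge f_t(n)^{1+\varepsilon}$, then after $T = O(1/\varepsilon^2)$ iterations we would reach $f_T(n) \ge n^c$, and taking $s = s_T$ would close the proof.

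The main obstacle is entirely quantitative: every stepping-up technique I know only converts the chromatic parameter of the auxiliary object into a polylog-in-$n$ factor in the exponent of the output, not a polynomial one. The polylog loss in Theorem~\ref{thm:linearsteppingup} stems from Properties~I and~II of the binary-distance function $\delta$ on $\{0,1\}^m$; replacing the auxiliary graph $G$ by a recursively built $3$-uniform coloring $\chi^{(t)}$ breaks these clean properties and would require a new iterated distance function, together with a higher-order generalization of the Krivelevich-style linear triple system of Lemma~\ref{steiner}. An alternative avenue would skip iteration entirely and seek a $3$-uniform analogue of the Mattheus--Verstra\"ete pseudorandom graph construction, producing a single $K_s^{(3)}$-free $3$-graph on $2^{n^c}$ vertices every $n$-set of which contains an edge; this is essentially a polynomial off-diagonal Ramsey bound for a concrete linear $3$-graph, which is itself wide open. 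Either route demands a genuinely new construction beyond the current pair-construction toolkit, and I expect this is precisely where the difficulty of the conjecture lies.
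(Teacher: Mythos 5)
This statement is a \emph{conjecture} in the paper, not a theorem; there is no proof to compare against. In fact the authors explicitly state, in the sentence immediately preceding it, that their ``methods seem insufficient'' even to prove the weaker assertion $r(K_4^{(3)}, K_n^{(3)}) \ge 2^{\omega(n\log n)}$. Your proposal correctly recognizes this. Your preliminary observation is sound: since the linear $3$-graph $F_k$ of \cref{thm:linearsteppingup} embeds in $K_k^{(3)}$, any coloring with no red $F_k$ also has no red $K_k^{(3)}$, so
\[
r(K_k^{(3)}, K_n^{(3)}) \ge r(F_k, K_n^{(3)}) > 2^{c(\log n)^{\alpha(k)}}
\]
with $\alpha(k) = \sqrt{k}/(128\log^5 k) \to \infty$. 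This already gives the conjecture with $n^c$ replaced by an arbitrary polylogarithmic function of $n$.

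The remaining jump, from $(\log n)^{\alpha}$ to $n^c$, is exactly what the conjecture asserts and precisely what is open. Your two candidate routes --- iterating the pair construction with a $3$-uniform auxiliary structure in place of the graph $G$ from \cref{cyclecomplete}, or seeking a Mattheus--Verstra\"ete-type pseudorandom $3$-graph that is $K_s^{(3)}$-free with $N^{o(1)}$ independence number on $N = 2^{n^c}$ vertices --- are both speculative, and you diagnose the obstruction accurately yourself: the stepping-up machinery, via Properties I and II of the bit-distance function $\delta$, inherently amplifies the chromatic parameter of the auxiliary object by only a polylogarithmic factor in the exponent, and no known variant, including your proposed iteration, escapes that ceiling. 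There is no error to flag, because there is no proof here; your write-up is an honest account of why the problem remains open, which is consistent with the paper's own framing of the conjecture.
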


We note that this is not true if we replace $K_n^{(3)}$ by $K_{n,n,n}^{(3)}$, since it is easy to see that $r(H, K_{n,n,n}^{(3)})\le 2^{O_H(n^2)}$ for all $H$. In fact, using the methods of~\cite{CFS2}, one can show that for every $3$-graph $H$ there exists $\epsilon > 0$ such that $r(H, K_{n,n,n}^{(3)}) \le 2^{O_H(n^{2-\epsilon})}$. We suspect that it may even be the case that $r(H, K_{n,n,n}^{(3)}) \le 2^{O_H(n \log n)}$ for all $H$, which would go some way towards explaining the difficulties in bypassing this bound.

%\vspace{3mm}
%\noindent
\subsection{Linear hypergraphs} 

The conjecture that $r(F, K_n^{(3)})$ is at most polynomial in $n$ for any linear $F$ was a special case of another unpublished conjecture, saying that if $F$ is $123$-inducible, then $r(F, K_n^{(3)})$ is at most polynomial in $n$. If that had been true, then, together with Theorem~\ref{thm:EHmpair}, we would have had a clean characterization saying that $r(F, K_n^{(3)})$ is at most polynomial in $n$ if $F$ is $123$-inducible and at least exponential in $n$ if $F$ is not $123$-inducible. Instead, Theorem~\ref{thm:linearsteppingup} leaves us in a messier, though arguably more interesting, situation. If $F$ is linear with $s$ vertices, then $r(F, K_n^{(3)}) \le r(K_s^{(3)}, K_n^{(3)})$ and the best known upper bound for $r(K_s^{(3)}, K_n^{(3)})$ is
$2^{O(n^{s-2}\log n)}$ from~\cite{CFS}. However, for linear $F$, the arguments in~\cite{CFS} can be extended to prove the stronger upper bound $r(F, K_n^{(3)}) \le 2^{n^{s-\Omega(s^{1/2})}}$. We even believe that the following may be true.

\begin{conj}
There is an absolute constant $C>0$ such that $r(F, K_n^{(3)}) \le 2^{O_F(n^C)}$ for every fixed linear $3$-graph $F$.
\end{conj}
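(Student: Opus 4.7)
The plan is to adapt and refine the iterative codegree-based embedding strategy of Conlon--Fox--Sudakov~\cite{CFS}, exploiting the linearity of $F$ in order to eliminate the dependence of the exponent on $s = |V(F)|$. The setup: in a $2$-coloring $\chi$ of $K_N^{(3)}$ with no blue $K_n^{(3)}$, the goal is to embed a red copy of $F$ provided $N \ge 2^{O_F(n^C)}$ for some absolute $C$. Order the vertices $v_1, \ldots, v_s$ of $F$ so that each $v_i$ lies in only a bounded number of edges with indices $< i$; such an ordering exists because $F$ is linear and hence has $O(s^2)$ edges distributed so that each pair lies in at most one edge.

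The first substantive step is a \emph{link-density dichotomy}: for each vertex $u$, consider the red link $L_u = \{\{v,w\} : \{u,v,w\} \text{ is red}\}$. If many blue links contain a large enough balanced tripartite configuration, the Fox--He polynomial bound $r(F, K_{n,n,n}^{(3)}) \le n^{O_F(1)}$ should already produce a blue $K_{n,n,n}^{(3)}$ and hence, after iterating through a few vertices, a blue $K_n^{(3)}$. Otherwise, we obtain a density condition forcing many red structures. Using dependent random choice, I would then pass to a large subset $U \subseteq V(K_N^{(3)})$ in which the red codegrees are pseudorandom: for most pairs $\{a,b\} \in \binom{U}{2}$, the red common neighborhood in $U$ has density bounded below by $n^{-O(1)}$. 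The vertices of $F$ are then embedded greedily into $U$ one at a time, using linearity to bound the number of codegree constraints at each step (each new vertex imposes only $O(1)$ link-type constraints rather than the higher-order codegree constraints needed for general $F$).

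The point at which an absolute $C$ must emerge is in the global bookkeeping of codegree losses across all $s$ embedding steps. A naive per-vertex accounting loses a factor of $n^{\Omega(1)}$ per step, giving the $n^{s - \Omega(\sqrt s)}$ exponent of the refined CFS bound cited just above the conjecture. To make $C$ absolute, I would attempt to collapse the embedding into $O(1)$ phases independent of $s$: partition $V(F)$ into a bounded number of \emph{independent pieces} (possible because linear $3$-graphs have low chromatic number with respect to the right auxiliary graph) and embed each piece wholesale via a tripartite Fox--He embedding inside a single regularity cell. This would promote the Fox--He bound from a terminal tool to the main workhorse at every stage.

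The main obstacle is exactly this amortization. All existing upper bounds for $r(F, K_n^{(3)})$ proceed vertex-by-vertex and thus accrue losses scaling with $s$, so a genuinely new mechanism---such as a sparse off-diagonal hypergraph regularity lemma producing cells of relative density at least $n^{-O(1)}$ with the exponent absolute, or a density-increment argument driven by the failure of red links to contain sparse tripartite structures---appears necessary. Without such a global structural input, the conjectured bound $r(F, K_n^{(3)}) \le 2^{O_F(n^C)}$ with absolute $C$ seems genuinely beyond current technology, which is presumably why the authors state it as a conjecture rather than attempting a proof.
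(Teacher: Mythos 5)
The statement you were asked to prove is a conjecture; the paper offers no proof of it, only the remark that for linear $F$ the Conlon--Fox--Sudakov machinery can be pushed from $2^{O(n^{s-2}\log n)}$ to $2^{n^{s-\Omega(\sqrt{s})}}$, which is still very far from an absolute exponent $C$. Your proposal correctly recognizes this and explicitly declines to claim a proof, which is the right call.

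The obstruction you identify is genuine and is exactly why the statement remains open: every known embedding scheme for $3$-uniform off-diagonal Ramsey upper bounds proceeds vertex-by-vertex and accrues a polynomial-in-$n$ loss at each of the $s$ steps, so the exponent inherits a dependence on $s$. Your two candidate repairs --- collapsing the embedding into $O(1)$ phases via a bounded decomposition of $F$ into ``independent pieces'' to be embedded wholesale by Fox--He, or invoking some sparse regularity-type input with cells of relative density $n^{-O(1)}$ where the exponent is absolute --- are reasonable things to want, but neither comes with a mechanism that would actually deliver the absolute constant. In particular, the first step's dichotomy is not yet an argument: ruling out blue $K_{n,n,n}^{(3)}$ in many links gives a one-shot density condition on red links, but you have not shown that this density is uniform enough across pairs, nor that it survives the repeated restriction to common red neighborhoods, with losses bounded independently of $s$. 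The second step's ``independent pieces'' decomposition is also underspecified: a linear $3$-graph has bounded codegree but not bounded degree, so it is not clear that the auxiliary graph you have in mind has chromatic number $O(1)$ uniformly in $s$. So what you have written is an honest and well-informed research sketch that correctly diagnoses the difficulty, not a proof --- which matches the paper, since the paper presents the statement as an open conjecture with no proof.
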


Perhaps this conjecture could also hold with $n^C$ replaced by $n^{o(1)}$ or with the family of linear $3$-graphs replaced by the family of $123$-inducible $3$-graphs, though we are rather less certain about these possibilities.

Finally, we would also like to know whether Theorem~\ref{thm:linearsteppingup} holds not just for some linear $F$ but for most linear $F$. To show this, one would just need to show that the pseudorandomness conditions proved in Lemma~\ref{steiner} hold with high probability for random (partial) Steiner systems. We may return to this problem, whose study would require methods rather different to those used here, in future work.

\vspace{3mm}
\noindent
{\bf Acknowledgements.} This research was initiated during a visit to the American Institute of Mathematics under their SQuaREs program.

\end{document}